\newcommand{\A}{{\mathcal{A}}}
\newcommand{\B}{{\mathcal{B}}}
\newcommand{\bb}{{\rm B}}
\newcommand{\C}{{\mathcal{C}}}
\newcommand{\cc}{{\rm C}}
\newcommand{\D}{{\mathcal{D}}}
\newcommand{\ds}{{\mathbf{\Delta}}}
\newcommand{\G}{{\mathcal{G}}}
\newcommand{\s}{{\mathcal{S}}}
\newcommand{\scg}{{\mathbf{S}_c(\mathbf{gr})}}
\newcommand{\sct}{{\mathbf{S}_c(\mathcal{T})}}
\newcommand{\J}{{\mathcal{J}}}
\newcommand{\N}{{\mathcal{N}}}
\newcommand{\col}{{\rm colim}\,}
\newcommand{\T}{{\mathcal{T}}}
\newcommand{\kk}{{\Bbbk}}
\newcommand{\md}{\text{-}\mathbf{Mod}}
\newcommand{\mdd}{\mathbf{Mod}\text{-}}
\newcommand{\gr}{\mathbf{gr}}
\newcommand{\fct}{\mathbf{Fct}}
\title{D\'ecomposition de Hodge pour l'homologie stable des groupes d'automorphismes des groupes libres}
\author{Aur\'elien Djament\thanks{CNRS, laboratoire Paul Painlev\'e (UMR 8524), Cit\'e scientifique, b\^at. M2, 59655 Villeneuve d'Ascq Cedex, France ; djament@math.cnrs.fr.}}
\newtheorem{thi}{Th\'eor\`eme}
\newtheorem{thm}{Th\'eor\`eme}[section]
\newtheorem{pr}[thm]{Proposition}
\newtheorem{cor}[thm]{Corollaire}
\newtheorem{lm}[thm]{Lemme}
\newtheorem{conj}[thm]{Conjecture}
\theoremstyle{definition}
\newtheorem{defi}[thm]{D\'efinition}
\newtheorem{nota}[thm]{Notation}
\newtheorem{conv}[thm]{Convention}
\theoremstyle{remark}
\newtheorem{rem}[thm]{Remarque}
\begin{document}

\maketitle

\begin{abstract}
On établit une décomposition de l'homologie stable des groupes d'automorphismes des groupes libres à coefficients polynomiaux contravariants en termes d'homologie des foncteurs. Elle permet plusieurs calculs explicites, qui recoupent des résultats établis de manière indépendante par O. Randal-Williams et généralisent certains d'entre eux.

Nos méthodes reposent sur l'examen d'extensions de Kan dérivées associées à plusieurs catégories de groupes libres, la généralisation d'un critère d'annulation homologique à coefficients polynomiaux dû à Scorichenko, le théorème de Galatius identifiant l'homologie stable des groupes d'automorphismes des groupes libres à celle des groupes symétriques, la machinerie des $\Gamma$-espaces et le scindement de Snaith.

\smallskip

\begin{center}
\textbf{Abstract}
\end{center}

We establish a decomposition of stable homology  of automorphism groups of free groups with polynomial contravariant coefficients in terms of functor homology. This allows several explicit computations, intersecting results obtained by independent methods by O. Randal-Williams and extending some of them.

Our methods rely on the investigation of Kan extensions associated to several categories of free groups, the extension of a cancellation criterion for homology with polynomial coefficients due to Scorichenko, Galatius's Theorem identifying the stable homology of automorphism groups of free groups to that of symmetric groups, the machinery of $\Gamma$-spaces and the Snaith splitting.
\end{abstract}

\smallskip

\noindent {\em Classification MSC 2010 :} 18A25, 18G15, 20E36, 20J06, 55P42 (18A40, 18E30, 18G40, 55P47).

\smallskip

\noindent {\em Mots clefs :} groupes d'automorphismes des groupes libres, homologie des groupes, homologie des foncteurs, foncteurs polynomiaux, $\Gamma$-espaces et spectres, extensions de Kan dérivées.

\noindent {\em Keywords: } automorphism groups of free groups, group homology, functor homology, polynomial functors, $\Gamma$-spaces and spectra, derived Kan extensions.

\section*{Introduction}

Cet article est une contribution à l'étude de la (co)homologie des groupes d'automorphismes des groupes libres à coefficients tordus, dans le domaine stable (c'est-à-dire pour des groupes libres de rang assez grand par rapport au degré (co)homologique). À coefficients constants, S. Galatius \cite{Gal} a montré que l'homologie stable des groupes d'automorphismes des groupes libres s'identifie à celle des groupes symétriques, déterminée cinquante ans auparavant par M. Nakaoka \cite{Nak2}. Les profondes méthodes topologiques mises en \oe uvre par Galatius servent de point de départ à un travail récent d'O. Randal-Williams \cite{RW2}, qui explicite certains groupes de cohomologie stable de ces groupes à coefficients tordus. Nous retrouvons et généralisons  la plupart de ces résultats de Randal-Williams, par une méthode indépendante. Nous obtenons également une nouvelle construction de classes de cohomologie introduites par N. Kawazumi \cite{K-Magnus}.

\subsubsection*{Calculs (co)homologiques}

Voici deux exemples fondamentaux de calculs cohomologiques que nos résultats généraux impliquent.

Le premier (théorème~\ref{trw} dans le corps de l'article) est un théorème établi indépendamment par Randal-Williams \cite[{\em Corollary}~D]{RW2}. Dans son énoncé, $\Lambda^d$ (resp. $S^d$) désigne la $d$-ème puissance extérieure (resp. symétrique).

\begin{thi}[Randal-Williams]\label{torw}
 Soient $n$, $r$ et $d$ des entiers tels que $r\geq 2(n+d)+3$ et $G$ un groupe libre de rang $r$. Notons $H$ la représentation $G_{ab}\otimes\mathbb{Q}$ de ${\rm Aut}(G)$. Le $\mathbb{Q}$-espace vectoriel $H^n({\rm Aut}(G);\Lambda^d(H))$ est nul si $n\neq d$ ; pour $n=d$, sa dimension est le nombre de partitions de $d$. Le $\mathbb{Q}$-espace vectoriel $H^n({\rm Aut}(G);S^d(H))$ est nul sauf pour $n=d=0$ ou $1$, auquel cas il est de dimension $1$.
\end{thi}

Le second (théorème~\ref{cr-tens} dans le corps de l'article) améliore \cite[{\em Theorem}~A (ii)]{RW2}, valable seulement rationnellement.

\begin{thi}\label{cri-tens}
Soient $n$, $r$ et $d$ des entiers tels que $r\geq 2(n+d)+3$ et $G$ un groupe libre de rang $r$. Notons $H$ la représentation $G_{ab}$ de ${\rm Aut}(G)$. On dispose d'un isomorphisme
\[H^n({\rm Aut}(G);H^{\otimes d})\simeq H^{n-d}({\rm Aut}(G);\mathbb{Z})^{\oplus\mathrm{p}(d)}\]
où $\mathrm{p}(d)$ désigne le nombre de partitions d'un ensemble à $d$ éléments.
\end{thi}

\subsubsection*{Décomposition de Hodge}

Nous obtenons les calculs cohomologiques précédents à partir d'une décomposition naturelle de certains groupes de (co)homologie stable des groupes d'automorphismes des groupes libres à coefficients tordus. Cette décomposition repose, à partir du cadre formel développé avec C. Vespa dans \cite{DV}, sur des considérations d'algèbre homologique dans des catégories de foncteurs, en particulier l'étude de certaines extensions de Kan dérivées. L'argument final de la démonstration s'appuie sur le scindement de Snaith ; la dénomination de {\em décomposition de Hodge} provient de l'analogie avec la décomposition de Hodge pour l'homologie de Hochschild supérieure de Pirashvili\,\footnote{Voir la remarque~\ref{rq-pira} pour plus de précisions.} \cite{P-hodge}.

Le soubassement catégorique de cette décomposition est le suivant. Soit $\gr$ la catégorie des groupes libres de rang fini, munie de la structure monoïdale symétrique donnée par le produit libre, noté $*$. Comme observé dans \cite{DV15}, le cadre général naturel pour étudier stablement l'homologie des groupes d'automorphismes des groupes libres n'est pas de considérer les seuls foncteurs définis sur $\gr$ pour produire des familles compatibles de représentations de ces groupes, mais plutôt des foncteurs définis sur une catégorie auxiliaire de groupes libres notée $\G$ (dont la définition est rappelée au début de la section~\ref{sstr}). Dans le présent travail, nous utilisons également une autre catégorie de groupes libres notée $\scg$ (voir la notation~\ref{not-sv}). Les catégories $\G$ et $\scg$ ont les mêmes objets et les mêmes groupes d'automorphismes que la catégorie $\gr$, mais pas les mêmes morphismes ; elles possèdent également une structure monoïdale symétrique induite par le produit libre $*$. On dispose de foncteurs canoniques
\begin{equation}\label{eqd1}\xymatrix{& & \gr^{op}\\
\G\ar[r]^-\gamma & \scg\ar[ru]^-\beta\ar[rd]^-\alpha & \\
& & \gr
}\end{equation}
qui sont monoïdaux et égaux à l'identité sur les objets.

Notre première décomposition à la Hodge (théorème~\ref{thab1}) provient de la détermination du foncteur dérivé total de l'extension de Kan à gauche le long de $\G^{op}\xrightarrow{\gamma^{op}}\scg^{op}$ sur le foncteur constant (théorème~\ref{th-hs} et corollaire~\ref{crsn}). On ramène ainsi l'homologie stable des groupes d'automorphismes des groupes libres à coefficients dans un foncteur défini sur $\scg$ à de l'homologie de cette catégorie et de l'homologie de groupes symétriques à coefficients tordus. Toutefois, l'homologie de $\scg$ s'avère généralement hors d'atteinte directe.

Afin d'obtenir une forme plus exploitable, pour certains coefficients, de notre décomposition de Hodge, nous étudions également le comportement homologique du foncteur $\scg\xrightarrow{\beta}\gr^{op}$. Nous montrons au théorème~\ref{th-scoc} qu'il induit un isomorphisme entre groupes de torsion sous une hypothèse de {\em polynomialité} des coefficients. La notion de foncteur polynomial depuis les groupes libres vers une catégorie abélienne que nous utilisons est celle provenant des travaux d'Eilenberg-Mac Lane \cite{EML}, où l'on remplace la somme directe de modules, à la source, par le produit libre de groupes. Les puissances tensorielles ou symétriques de l'abélianisation constituent des archétypes de foncteurs polynomiaux dans ce contexte.

Nous obtenons ainsi au théorème~\ref{thab2} une décomposition de Hodge pour l'homologie stable des groupes d'automorphismes des groupes libres à coefficients dans un foncteur polynomial contravariant sur $\gr$ qui fait intervenir des groupes de torsion sur la catégorie $\gr$ via une suite spectrale d'hyperhomologie où apparaissent aussi des groupes symétriques. Rationnellement, le théorème~\ref{thab2} prend la forme simple du corollaire~\ref{corq}, qu'on peut énoncer comme suit, où $\mathfrak{a}_\mathbb{Q}$ désigne la tensorisation par $\mathbb{Q}$ du foncteur d'abélianisation et $\Lambda^s$ la $s$-ème puissance extérieure :

\begin{thi} Soient $i$, $m$, $d$ des entiers naturels tels que $m\geq 2(i+d)+3$ et $F$ un foncteur de $\gr^{op}$ vers les $\mathbb{Q}$-espaces vectoriels, polynomial de degré $d$. Il existe un isomorphisme naturel
\[H_i({\rm Aut}(\mathbb{Z}^{*m});F(\mathbb{Z}^{*m}))\simeq\bigoplus_{r+s=i}{\rm Tor}^\gr_r(F,\Lambda^s(\mathfrak{a}_\mathbb{Q})).\]
\end{thi}

Nos résultats possèdent des variantes \textbf{co}homologiques, comme le théorème~\ref{th-coh}, qui rationnellement peut s'exprimer ainsi :

\begin{thi} Soient $i$, $m$, $d$ des entiers naturels tels que $m\geq 2(i+d)+3$ et $X$ un foncteur de $\gr$ vers les $\mathbb{Q}$-espaces vectoriels, polynomial de degré $d$. Il existe un isomorphisme naturel
\[H^i({\rm Aut}(\mathbb{Z}^{*m});X(\mathbb{Z}^{*m}))\simeq\bigoplus_{r+s=i}{\rm Ext}^r_{\gr\md}(\Lambda^s(\mathfrak{a}_\mathbb{Q}),X).\]
\end{thi}

 Nos décompositions s'appliquent à des calculs explicites, comme les théorèmes~\ref{torw} et~\ref{cri-tens} ci-avant, grâce aux calculs cohomologiques sur $\gr$ de Vespa \cite{ves-cal}. On en tire également des conséquences qualitatives, comme des résultats généraux de finitude, dans la deuxième assertion du corollaire~\ref{cor-inv}, ou une description de structures multiplicatives en termes d'homologie des foncteurs, dans les propositions~\ref{pr-muh} et~\ref{pr-muc}.

\subsubsection*{Organisation de l'article}

Ce travail est constitué de trois parties. La première en donne tous les résultats principaux, en admettant les théorèmes~\ref{th-scoc} et~\ref{th-hs}, qui font l'objet des parties~\ref{pg2} et~\ref{pg3} respectivement.

Plus précisément, après quelques préliminaires classiques (section~\ref{sect-rappels}), la section~\ref{sstr} donne nos résultats théoriques sur l'homologie stable des groupes d'automorphismes des groupes libres à coefficients tordus, et les démontre, à partir des théorèmes susmentionnés. La section~\ref{sect-mult} en donne la forme \textbf{co}homologique et précise leurs compatibilités (en homologie comme en cohomologie) à des structures multiplicatives dont la construction est rappelée. La partie~\ref{pg1} se clôt sur des calculs explicites (section~\ref{scst}), qui incluent les théorèmes~\ref{torw} et~\ref{cri-tens} déjà énoncés, et le lien avec les classes de cohomologie de Kawazumi \cite{K-Magnus}.

La partie~\ref{pg2} est consacrée aux méthodes d'annulation et de comparaison homologiques à coefficients polynomiaux inaugurées par A. Scorichenko \cite{Sco}. La section~\ref{secsco} étend, dans la proposition~\ref{lm-sco}, le critère d'annulation originel de Scorichenko (valable pour des foncteurs sur une catégorie {\em additive}) à un contexte plus général qui inclut les foncteurs sur les groupes libres. Dans la section~\ref{ssco}, on montre comment en déduire le théorème~\ref{th-scoc}. Enfin, la section~\ref{s-conj} discute une conjecture~(\ref{conj-biv}) de comparaison homologique à coefficients polynomiaux plus générale qui permettrait de déduire de notre décomposition de Hodge <<~abstraite~>> (théorème~\ref{thab1}) un résultat (cf. théorème~\ref{th-biq}) rendant accessible au calcul la (co)homologie stable des groupes d'automorphismes des groupes libres à coefficients polynomiaux \textbf{bi}variants (c'est-à-dire pour des foncteurs définis sur $\gr^{op}\times\gr$).

Dans la partie~\ref{pg3}, on commence par associer à tout $A$-groupe libre (c'est-à-dire à un groupe muni d'une action d'un autre groupe $A$, et libre dans la catégorie des groupes munis d'une action de $A$) de type fini une catégorie auxiliaire dont on montre le caractère contractile (section~\ref{scc}). Les considérations mises en \oe uvre à cette fin, qui relèvent de la théorie combinatoire des groupes et de la théorie homotopique élémentaire des ensembles ordonnés, sont indépendantes de tout ce qui précède dans l'article, mais elles servent de préparation à la démonstration du théorème~\ref{th-hs}. En effet, l'étude du défaut de fidélité du foncteur $\gamma :\G\to\scg$ (qui est, par construction, plein et essentiellement surjectif) fait naturellement apparaître des structures de $A$-groupe libre (où $A$ est lui-même un groupe libre), comme on l'explique au paragraphe~\ref{skap}. Cela nous permet d'utiliser ces catégories auxiliaires, par l'intermédiaire de constructions de Grothendieck, pour déterminer le type d'homotopie de catégories appropriées et compléter la démonstration de nos résultats. Nous utilisons pour cela le théorème de Galatius \cite{Gal} déjà évoqué et la machinerie, classique en $K$-théorie algébrique, des $\Gamma$-espaces, qui donne une approche fonctorielle pour associer un spectre à une petite catégorie monoïdale symétrique (cf. Segal \cite{Seg}).

\subsubsection*{Liens avec d'autres travaux}

Comme nous l'avons déjà brièvement indiqué, le théorème~\ref{torw} et une version affaiblie du théorème~\ref{cri-tens} (tensorisée par le localisé de $\mathbb{Z}$ dans lequel les nombres premiers inférieurs ou égaux au degré de la puissance tensorielle considérée sont inversés) ont également été obtenus, juste après que les premières versions du présent travail eurent été prépubliées, par Randal-Williams \cite{RW2}, à l'aide de méthodes topologiques indépendantes des nôtres. Nos résultats et ceux de \cite{RW2} se recoupent largement, sans toutefois coïncider : Randal-Williams effectue également des calculs complémentaires de cohomologie des groupes d'automorphismes {\em extérieurs} des groupes libres à coefficients tordus, que nos méthodes semblent insuffisantes à établir (sauf démonstration de la conjecture~\ref{conj-biv}) ; en revanche, l'approche de \cite{RW2} ne semble pas suffisante pour établir le théorème~\ref{cri-tens} dans toute sa généralité. Signalons aussi que l'article \cite{RW2} s'inscrit dans le prolongement de la prépublication \cite{RW} (laquelle constitua une motivation importante à la réalisation du présent travail) qui montrait notamment que, la stabilité homologique étant admise, le théorème~\ref{torw} découlait de l'arrêt conjectural à la deuxième page de certaines suites spectrales de nature topologique. Cet arrêt est justement établi dans \cite{RW2}, qui fournit également plusieurs développements et améliorations de \cite{RW}.

\medskip

Appliqués au foncteur constant, nos résultats (le théorème~\ref{thab2} par exemple) indiquent l'isomorphisme entre l'homologie stable des groupes d'automorphismes des groupes libres et celle des groupes symétriques, profond résultat dû à Galatius \cite{Gal}. En fait, nous {\em utilisons} dès le début ce résultat, que les méthodes d'homologie des foncteurs semblent impuissantes à aborder : elles servent à ramener des calculs d'homologie stable à coefficients tordus à l'homologie stable des mêmes groupes à coefficients constants et à des groupes d'homologie des foncteurs --- cf. la méthode générale introduite dans \cite[section~1]{DV}.

Dans \cite{DV15}, cette méthode générale est déjà appliquée pour obtenir l'annulation de certains groupes d'homologie stable des groupes d'automorphismes des groupes libres à coefficients tordus. Des cas particuliers de cette annulation avaient été obtenus à l'aide de techniques topologico-géométriques par A. Hatcher et N. Wahl \cite{HW,HW-erra} et par Randal-Williams au début de \cite{RW} (repris dans \cite{RW2}), à partir de résultats de Hatcher, K. Vogtmann et Wahl \cite{HV04,HVW}. L'article \cite{DV15} traitait de foncteurs polynomiaux {\em co}variants (pour l'homologie stable des automorphismes des groupes libres ; en cohomologie il faut changer la variance), dont le comportement s'avère assez différent des coefficients {\em contra}variants examinés dans le présent travail. Cette différence, qui contraste avec ce qui advient pour l'homologie stable des groupes linéaires à coefficients tordus, par exemple, est déjà discutée dans \cite{RW} et \cite[remarque~7.1]{DV15}. Le point de départ (l'utilisation de la catégorie de groupes libres $\G$) de \cite{DV15} et du présent article coïncident toutefois. De fait, \cite{DV15} élucide le comportement homologique à coefficients polynomiaux du foncteur composé $\alpha\circ\gamma : \G\to\gr$ du diagramme~\eqref{eqd1}, de manière directe, c'est-à-dire sans transiter par l'intermédiaire de la catégorie $\scg$.

De plus, \cite{DV15} démontre déjà le théorème~\ref{thab2} en degré homologique $1$, pour les foncteurs se factorisant par l'abélianisation. Des cas particuliers ou variantes de ce résultat en degré (co)homologique $1$ avaient été obtenus auparavant par Kawazumi (voir la fin de la section~6 de \cite{K-Magnus}) et T. Satoh \cite{Sat1}. L'article \cite{Sat2} de Satoh établit également des cas particuliers en degré (co)homologique $2$ de nos résultats. Reposant sur une approche de théorie des groupes assez explicite, les résultats de Satoh fournissent aussi certains calculs de (co)homologie instables en bas degré.

Les méthodes d'homologie des foncteurs pourraient permettre (cf. section~\ref{s-conj}) d'obtenir des résultats généralisant à la fois ceux de \cite{DV15} et du présent article, à savoir le calcul (au moins rationnellement) de groupes de (co)homologie stable des groupes d'automorphimes des groupes libres à coefficients polynomiaux {\em bi}variants, c'est-à-dire de la forme $H_*({\rm Aut}(\mathbb{Z}^{*r});B(\mathbb{Z}^{*r},\mathbb{Z}^{*r}))$ (pour $r$ assez grand, ou l'analogue en cohomologie), où $B : \gr^{op}\times\gr\to\mathbf{Ab}$ est un foncteur polynomial.

Signalons enfin que le calcul de la cohomologie des groupes d'automorphismes des groupes libres à coefficients dans des représentations variées figure comme  $17^{\text{\`eme}}$ problème dans l'article de survol \cite{Mor} de Morita.

\paragraph*{Remerciements} 
L'auteur est reconnaissant envers les collègues suivants pour des discussions ou encouragements utiles à la réalisation de ce travail : Gregory Arone, Vincent Franjou, Camille Horbez, Nariya Kawazumi, Teimuraz Pirashvili, Lionel Schwartz et Christine Vespa.

Il remercie Oscar Randal-Williams de lui avoir communiqué des versions préliminaires de \cite{RW2} et pour les échanges y afférents, qui ont contribué au renforcement des premières versions du présent travail (qui ne donnaient la décomposition à la Hodge que rationnellement).

Il sait gré enfin au rapporteur anonyme dont la relecture attentive a permis de substantielles améliorations de la version d'origine.

\part{Homologie stable}\label{pg1}

\section{Notations et rappels généraux}\label{sect-rappels}

\paragraph*{Catégories ensemblistes}
On note $\mathbf{Ens}$ la catégorie des ensembles et $\mathbf{ens}$ la sous-catégorie pleine des ensembles finis, ou plutôt son squelette constitué des ensembles $\mathbf{n}:=\{1,\dots,n\}$ pour $n\in\mathbb{N}$. On désigne par $\Omega$ la sous-catégorie de $\mathbf{ens}$ ayant les mêmes objets et dont les morphismes sont les fonctions {\em surjectives}. On note également $\Gamma$ la catégorie ayant les mêmes objets que $\mathbf{ens}$ et dont les morphismes sont les {\em applications partiellement définies}. Autrement dit, un morphisme $\varphi : X\to Y$ est la donnée d'un sous-ensemble de $X$ noté ${\rm Def}(\varphi)$ et d'une fonction ${\rm Def}(\varphi)\to Y$. Cette catégorie est équivalente à la catégorie des ensembles finis {\em pointés}, les morphismes étant les applications (partout définies) préservant le point de base (une équivalence est donnée par l'adjonction d'un point de base externe à un objet de la catégorie $\Gamma$). On prendra garde que cette convention, classique, pour la catégorie $\Gamma$ (qui concorde par exemple avec les articles de Pirashvili comme \cite{P-hodge}) correspond à la catégorie {\em opposée} de la catégorie notée $\Gamma$ par Segal \cite{Seg}.

L'objet $\mathbf{0}=\varnothing$ est nul dans $\Gamma$ ; la somme ensembliste induit une somme catégorique sur $\Gamma$, notée $\sqcup$. 

On désigne par $\Pi$ la sous-catégorie de $\Gamma$ ayant les mêmes objets et dont les morphismes sont les applications surjectives partiellement définies.

 On note enfin $\mathbf{Ens}_\bullet$ la catégorie des ensembles pointés.

\paragraph*{Catégories de foncteurs}
Si $\C$ et $\D$ sont des catégories, avec $\C$ (essentiellement) petite, on note $\fct(\C,\D)$ la catégorie des foncteurs de $\C$ vers $\D$ (les morphismes étant les transformations naturelles).

Des références possibles pour le matériel classique ici rappelé sont \cite[{\em Appendix}~C]{Lli} ou \cite[§\,3]{FPs}.

Dans tout cet article, $\kk$ désigne un anneau commutatif (au-dessus duquel seront pris les produits tensoriels de base non spécifiée), qui sera le plus souvent un corps ou l'anneau des entiers. Si $\C$ est une petite catégorie, on note $\C\text{-}\,{_\kk\mathbf{Mod}}$, ou simplement $\C\md$ s'il n'y a pas d'ambiguïté sur $\kk$, pour $\fct(\C,{_\kk\mathbf{Mod}})$, où $_\kk\mathbf{Mod}$ est la catégorie des $\kk$-modules à gauche. Pour $\kk=\mathbb{Z}$, la catégorie $_\kk\mathbf{Mod}$ des groupes abéliens sera également notée $\mathbf{Ab}$. On note $_\kk\mdd\C$ ou $\mdd\C$ pour $\C^{op}\md$. La catégorie $\C\md$ est une catégorie abélienne de Grothendieck. Pour $c\in {\rm Ob}\,\C$, on pose $P^\C_c:=\kk[\C(c,-)]$ (pour alléger les notations, on ne mentionne pas l'anneau $\kk$, de même que dans plusieurs autres notations introduites ci-dessous), où $\C(c,d)$ désigne l'ensemble des morphismes de $c$ vers $d$ dans $\C$ et $\kk[-]$ le foncteur de $\kk$-linéarisation, c'est-à-dire l'adjoint à gauche au foncteur d'oubli $_\kk\mathbf{Mod}\to\mathbf{Ens}$. Le foncteur $P^\C_c$ représente l'évaluation en $c$ de $\C\md$ vers $_\kk\mathbf{Mod}$ (lemme de Yoneda), il est donc projectif de type fini, et les $P^\C_c$ engendrent $\C\md$ lorsque $c$ parcourt ${\rm Ob}\,\C$.

Si $F$ et $G$ sont des foncteurs de $\C\md$ et $\D\md$ respectivement, où $\C$ et $\D$ sont des petites catégories, on note $F\boxtimes G$ leur {\em produit tensoriel extérieur}, c'est-à-dire le foncteur de $(\C\times\D)\md$ défini par $(F\boxtimes G)(c,d):=F(c)\otimes G(d)$.

Le bifoncteur  $-\underset{\C}{\otimes}- : (\mdd\C)\times (\C\md)\to\,_\kk\mathbf{Mod}$ peut se définir comme la composée du produit tensoriel extérieur et du foncteur  {\em cofin} $(\C^{op}\times\C)\md\to\,_\kk\mathbf{Mod}$ (pour une présentation plus directe, voir par exemple \cite[§\,C.10]{Lli}). Le bifoncteur $\underset{\C}{\otimes}$ est {\em équilibré} (\cite[§\,2.7]{Weib}), en le dérivant à gauche par rapport à l'une ou l'autre des variables, on obtient un foncteur gradué ${\rm Tor}^\C_*(-,-)$. On dispose d'un isomorphisme canonique ${\rm Tor}^{\C^{op}}_*(G,F)\simeq {\rm Tor}^\C_*(F,G)$.

L'{\em homologie} de $\C$ à coefficients dans un foncteur $F$ de $\C\md$ est $H_*(\C;F):={\rm Tor}^\C_*(\kk,F)$. Noter que $H_0(\C;F)$ n'est autre que la colimite de $F$. 

Notons $\ds$ le squelette usuel de la catégorie des ensembles finis non vides totalement ordonnés et $\s:=\mathbf{Fct}(\ds^{op},\mathbf{Ens})$ la catégorie des ensembles simpliciaux. On désignera par $\N$ le foncteur {\em nerf} de la catégorie $\mathbf{Cat}$ des petites catégories vers $\s$ (cf. par exemple \cite[§\,B.12]{Lli}). Si $G$ est un groupe, vu comme catégorie à un objet, $\N(G)$ sera noté $\bb(G)$.

 On notera $C_*(-;\kk)$ (ou simplement $C_*(-)$ pour $\kk=\mathbb{Z}$) le foncteur de $\s$ vers la catégorie des complexes de chaînes de $\kk$-modules associant à un ensemble simplicial son complexe de Moore à coefficients dans $\kk$. Si $M$ est un $\kk$-module, qu'on peut voir comme foncteur constant dans $\C\md$, l'homologie $H_*(\C;M)$ est canoniquement isomorphe à l'homologie de l'ensemble simplicial $\N(\C)$ à coefficients dans $M$, c'est-à-dire à l'homologie du complexe $C_*(\N(\C);\kk)\otimes M$.

\smallskip

Il est classique que toutes les constructions précédentes possèdent des fonctorialités relativement à la petite catégorie de base. Par exemple, si $\varphi : \C\to\D$ est un foncteur entre petites catégories, le foncteur de précomposition par $\varphi$, noté $\varphi^* : \D\md\to\C\md$, induit un morphisme naturel $H_*(\C;\varphi^*F)\to H_*(\D;F)$ pour $F\in {\rm Ob}\,\D\md$, et plus généralement un morphisme naturel
\begin{equation}\label{eqft}
\mathrm{Tor}^\C_*(\varphi^* G,\varphi^*F)\to\mathrm{Tor}^\D_*(G,F)
\end{equation}
pour $G\in {\rm Ob}\,\mdd\D$.

\paragraph*{Catégories d'éléments, extensions de Kan et suites spectrales de Grothendieck}

Si $\C$ est une catégorie et $T : \C\to\mathbf{Ens}$ un foncteur, nous noterons $\C_T$ la {\em catégorie d'éléments} associée : ses objets sont les couples $(c,x)$ constitués d'un objet $c$ de $\C$ et d'un élément $x$ de $T(c)$, et les morphismes $(c,x)\to (d,y)$ sont les morphismes $f : c\to d$ de $\C$ tels que $T(f)(x)=y$. On notera $o_T : \C_T\to\C$ le foncteur d'oubli. Le foncteur de précomposition $o_T^* : \C\md\to\C_T\md$ possède un adjoint à gauche {\em exact} $\omega_T : \C_F\md\to\C\md$ donné sur les objets par
$$\omega_T(X)(c)=\underset{x\in T(c)}{\bigoplus}X(c,x).$$

Cette adjonction entre foncteurs exacts induit un isomorphisme naturel en homologie :
\begin{equation}\label{east}
{\rm Tor}^\C_*(Y,\omega_T(X))\simeq {\rm Tor}^{\C_T}_*(o_T^*Y,X)
\end{equation}
pour $Y\in {\rm Ob}\,\mdd\C$ et $X\in {\rm Ob}\,\C_T\md$ (par abus, on note encore $o_T$ pour $o_T^{op} : \C_T^{op}\to\C^{op}$). C'est le {\em lemme de Shapiro pour l'homologie des petites catégories} de \cite[§\,C.12]{Lli}.

Si $\xi : \C\to\D$ est un morphisme de $\mathbf{Cat}$, nous noterons $\xi_! : \mdd\C\to\mdd\D$ l'extension de Kan à gauche le long du foncteur $\xi^{op} : \C^{op}\to\D^{op}$. On dispose d'un isomorphisme
$$\xi_!(F)\underset{\D}{\otimes}G\simeq F\underset{\C}{\otimes}\xi^*G$$
naturel en les objets $F$ et $G$ de $\mdd\C$ et $\D\md$ respectivement. Cet isomorphisme d'adjonction peut se dériver comme suit, où $\mathbf{L}$ indique la dérivation à gauche des foncteurs :

\begin{pr}\label{pr-ssga} Soit $\xi : \C\to\D$ un foncteur entre petites catégories.
\begin{enumerate}
\item On dispose d'un quasi-isomorphisme naturel de complexes de chaînes de $\kk$-modules
\begin{equation}\label{ssggc}
 F\overset{\mathbf{L}}{\underset{\C}{\otimes}}\xi^*G\simeq\mathbf{L}\xi_!(F)\overset{\mathbf{L}}{\underset{\D}{\otimes}}G.
\end{equation}
\item Il existe un quasi-isomorphisme
\begin{equation}\label{dgekc}
 \mathbf{L}(\xi_!)(\kk)(d)\simeq C_*\big(\mathcal{N}(\C_{\xi^*\D(d,-)});\kk\big)
\end{equation}
naturel en l'objet $d$ de $\D^{op}$.
\item Il existe une suite spectrale
\begin{equation}\label{ssgg}
 {\rm E}^2_{i,j}={\rm Tor}^\D_i(\mathbf{L}_j(\xi_!)(F),G)\Rightarrow{\rm Tor}^\C_{i+j}(F,\xi^*G).
\end{equation}
naturelle en les objets $F$ et $G$ de $\mdd\C$ et $\D\md$ respectivement ;
\item les foncteurs dérivés à gauche de $\xi_!$ sont donnés par des isomorphismes
\begin{equation}\label{dgek}
 \mathbf{L}_\bullet(\xi_!)(F)(d)\simeq {\rm Tor}_\bullet^\C(F,\xi^*P^\D_d)\simeq H_\bullet(\C_{\xi^*\D(d,-)}^{op};o^*_{\xi^*\D(d,-)}F)
\end{equation}
de $\kk$-modules gradués naturels en $F$ et en l'objet $d$ de $\D^{op}$ ;
\item En particulier, $\mathbf{L}_\bullet(\xi_!)(\kk)(d)$ est naturellement isomorphe à l'homologie du complexe $C_*\big(\mathcal{N}(\C_{\xi^*\D(d,-)});\kk\big)$.
\end{enumerate}
\end{pr}

\begin{proof} Il est classique que l'adjonction (ou sa variation en termes de produit tensoriel) entre le foncteur {\em exact} $\xi^*$ et le foncteur $\xi_!$ se propage aux catégories dérivées, fournissant le quasi-isomorphisme naturel \eqref{ssggc} et, en passant à l'homologie, la suite spectrale \eqref{ssgg} (cf. par exemple \cite[§\,10.8]{Weib}). Les autres assertions s'en déduisent en appliquant ce qu'on vient d'observer au foncteur projectif $G=P^\D_d$ et en utilisant l'isomorphisme naturel \eqref{east}.
\end{proof}

\section{Résultats principaux}\label{sstr}

La démonstration de nos résultats repose sur la considération de différentes catégories de groupes libres et leur comparaison homologique. Commençons par donner des notations qui interviendront dans tout l'article. On note $\mathbf{Grp}$ la catégorie des groupes et $\mathbf{gr}$ la sous-catégorie pleine des groupes libres de rang fini, ou plutôt son squelette constitué des groupes $\mathbb{Z}^{*n}$, pour $n\in\mathbb{N}$, où $*$ désigne le produit libre, c'est-à-dire la somme catégorique de $\mathbf{Grp}$. On désigne par $\G$, comme dans \cite{DV15} (avec la terminologie de \cite{RWW}, c'est la {\em catégorie homogène} associée aux groupes d'automorphismes des groupes libres), la catégorie ayant les mêmes objets que $\mathbf{gr}$ et dont les morphismes $G\to H$ sont les couples $(u,T)$ constitués d'un monomorphisme de groupes $u : G\hookrightarrow H$ et d'un sous-groupe $T$ de $H$ tels que $H=u(G)*T$. Ici, $*$ désigne le produit libre {\em interne}, qui est une opération {\em partiellement définie} sur les familles de sous-groupes de $H$ ; l'égalité précédente signifie donc que le morphisme de groupes $G*T\to H$ dont la composante $G\to H$ est $u$ et la composante $T\to H$ est l'inclusion est un isomorphisme. La composition $G\xrightarrow{(u,T)} H\xrightarrow{(v,S)} K$ dans $\G$ est le morphisme $(v\circ u,S*v(T))$.

Si $F$ est un foncteur de $\G\md$, nous noterons
\[H^{st}_*(F):=\underset{n\in\mathbb{N}}{\col}H_*({\rm Aut}(\mathbb{Z}^{*n});F(\mathbb{Z}^{*n}))\]
l'homologie stable des groupes d'automorphismes des groupes libres à coefficients tordus par $F$. Les flèches qui définissent la colimite sont induites par les inclusions canoniques ${\rm Aut}(\mathbb{Z}^{*n})\to {\rm Aut}(\mathbb{Z}^{*(n+1)})$ (induites par le foncteur $-*\mathbb{Z}$) et, sur les coefficients, par les morphismes $\mathbb{Z}^{*n}\to\mathbb{Z}^{*(n+1)}$ donnés par l'inclusion $\mathbb{Z}^{*n}\hookrightarrow\mathbb{Z}^{*(n+1)}=\mathbb{Z}^{*n}*\mathbb{Z}$ munie du sous-groupe du but donné par le dernier facteur $\mathbb{Z}$.

La première étape de l'approche de l'homologie stable des groupes d'automorphismes des groupes libres par l'homologie des foncteurs est la suivante.

\begin{pr}[\cite{DV15}, proposition~4.4 et corollaire~4.5]\label{dv-gl}
 Soit $F$ un objet de $\G\md$. Il existe un isomorphisme naturel
 \[H^{st}_*(F)\xrightarrow{\simeq} H_*(\G\times G_\infty;\pi^*F)\]
 de $\kk$-modules gradués, où $G_\infty$ désigne le groupe $\underset{n\in\mathbb{N}}{\col}{\rm Aut}(\mathbb{Z}^{*n})$ (vu comme catégorie à un objet) et $\pi : \G\times G_\infty\to\G$ le foncteur de projection.

 En particulier, si $\kk$ est un corps, on a un isomorphisme naturel
\[H^{st}_*(F)\simeq H_*(\G;F)\otimes H_*(G_\infty;\kk)\simeq H_*(\G;F)\otimes H_*(\mathfrak{S}_\infty;\kk)\]
et donc $H^{st}_*(F)\simeq H_*(\G;F)$ si $\kk$ est de caractéristique nulle.
 \end{pr}
 
 Cet énoncé, présent dans \cite{DV15}, est un corollaire de résultats généraux de comparaison d'homologie stable des groupes à coefficients tordus et d'homologie des foncteurs apparaissant dans \cite{DV} et de l'identification de l'homologie de $G_\infty$ à celle de $\mathfrak{S}_\infty$ (déterminée par Nakaoka \cite{Nak2}), due à Galatius \cite{Gal}.
 
 Dans la suite, nous nous attacherons donc à étudier l'homologie de la catégorie $\G$ à coefficients dans des foncteurs appropriés. La comparaison homologique directe avec la catégorie de groupes libres usuelle $\mathbf{gr}$ (qui se prête à des calculs explicites), hors des cas traités dans \cite{DV15}, semblant hors de portée, nous utiliserons plusieurs catégories intermédiaires. 
 
  \begin{nota}\label{not-sv}
  \begin{enumerate}
  \item Soit $\C$ une catégorie. On note $\mathbf{S}(\C)$ la catégorie ayant les mêmes objets que $\C$ et telle que
  \[\mathbf{S}(\C)(c,d):=\{(f,g)\in\C(c,d)\times\C(d,c)\,|\,g\circ f={\rm Id}_c\},\]
  avec la composition $(f,g)\circ (f',g'):=(f\circ f',g'\circ g)$.
  \item Soit $(\T,*,0)$ une catégorie monoïdale symétrique dont l'unité $0$ est objet nul. On désigne par $\sct$ la sous-catégorie de $\mathbf{S}(\T)$ ayant les mêmes objets et dont les morphismes $(u,v) : G\to H$ sont ceux tels qu'existe un objet $T$ de $\T$ et un isomorphisme $H\simeq G*T$ faisant commuter le diagramme
 $$\xymatrix{G\ar[r]^u\ar[rd] & H\ar[r]^v\ar[d]^-\simeq & G\\
 & G*T\ar[ru] &
 }$$
 dont les flèches obliques sont l'inclusion et la projection canoniques.
  \end{enumerate}
 \end{nota}

 Remarquer que $\scg$ est l'image du foncteur canonique $\G\to\mathbf{S}(\gr)$ qui est l'identité sur les objets et associe à un  morphisme $(u,T) : G\to H$ le couple $(u,H=u(G)*T\twoheadrightarrow u(G)\xrightarrow{\simeq} G)$. On notera $\gamma : \G\to\scg$ le foncteur plein et essentiellement surjectif qu'il induit.
 
 On dispose de foncteurs canoniques $\mathbf{S}(\C)\to\C$ et $\mathbf{S}(\C)\to\C^{op}$, qui sont l'identité sur les objets et associent $f$ et $g$ respectivement à une flèche $(f,g)$ de $\mathbf{S}(\C)$.
 
 La restriction à $\scg$ du foncteur canonique $\mathbf{S}(\gr)\to\gr^{op}$ sera notée $\beta$.
 
La démonstration du théorème de Scorichenko \cite{Sco} donnée dans \cite[§\,5.2]{Dja-JKT} repose sur la comparaison de l'homologie à coefficients analytiques de $\mathbf{S}(\A)$ à celle de $\A$, lorsque $\A$ est une petite catégorie additive. Dans le cas des groupes d'automorphismes des groupes libres qui nous intéresse ici, une étape intermédiaire cruciale analogue interviendra : nous démontrerons (sous une forme plus générale) le théorème~\ref{th-scoc} ci-dessous dans la section~\ref{ssco}.

Avant de l'énoncer, on rappelle qu'un foncteur est dit {\em analytique} s'il est isomorphe à une colimite, qu'on peut supposer filtrante, de foncteurs polynomiaux. La notion classique de {\em foncteur polynomial}, qui remonte à Eilenberg-Mac Lane \cite[{\em Chapter}~II]{EML} pour des foncteurs entre catégories de modules, est exposée dans le cadre général d'une catégorie source monoïdale symétrique dont l'unité est objet nul dans \cite[§\,2]{HPV}, par exemple. Dans les cas qu'on considère, la structure monoïdale symétrique sera toujours donnée par le produit libre.
 
 \begin{thm}\label{th-scoc}
 Si $F$ est un foncteur analytique de  $\mdd\gr$ et $G$ un foncteur arbitraire de $\gr\md$, alors le morphisme
 \[{\rm Tor}_*^\scg(\beta^*G,\beta^*F)\to {\rm Tor}_*^\gr(F,G)\]
 qu'induit le foncteur $\beta : \scg\to\gr^{op}$
 est un isomorphisme.
\end{thm} 

Nous aurons besoin ensuite de comparer l'homologie des catégories $\G$ et $\scg$ (sans aucune hypothèse d'analycité). Le foncteur $\gamma : \G\to\scg$ est plein et essentiellement surjectif ; il est loin d'être fidèle, mais son défaut de fidélité peut être contrôlé. Cette étude sera menée dans la partie~\ref{pg3} ; avant de donner son résultat principal, nous avons besoin d'une notation\,\footnote{Le symbole $A$ utilisé dans la notation suivante (et qui sera repris dans les deux énoncés qui la suivent, puis surtout dans la partie~\ref{pg3}) pour désigner un groupe (non abélien) libre ne doit pas être confondu avec la notation $\mathrm{A}_\kk$ qui sera employée dans le théorème~\ref{thab1} et le corollaire~\ref{cor-qab}.} :

\begin{nota}\label{ncc}
On note $\cc : \scg^{op}\to\mathbf{Cat}$ le foncteur associant à un groupe libre de rang fini $A$ la catégorie d'éléments $\G_{\gamma^*\scg(A,-)}$ (les objets de $\G_{\gamma^*\scg(A,-)}$ sont donc les couples constitués d'un objet $G$ de $\G$ et d'un morphisme $A\to\gamma(G)$ de $\scg$), la fonctorialité venant de celle du foncteur Hom sur $\scg$.
\end{nota}

Nous relierons les objets de $\G_{\gamma^*\scg(A,-)}$ aux $A$-groupes libres, dans la section~\ref{skap}. Nous verrons également que $\cc(A)$ possède une structure monoïdale symétrique naturelle en $A$, induite par la somme amalgamée au-dessus de $A$. Par conséquent, l'ensemble simplicial $\N(\cc(A))$ possède une structure d'espace de lacets infinis naturelle en $A$. 

La partie~\ref{pg3} visera à établir :

\begin{thm}\label{th-hs}
Il existe une équivalence d'homotopie d'espaces de lacets infinis
\[\N(\cc(A))\simeq\Omega^\infty\Sigma^\infty\big(\bb\big(\beta^{op}(A)\big)\big)\]
naturelle en l'objet $A$ de $\scg^{op}$.
\end{thm}

En utilisant le scindement de Snaith \cite{Snaith}, on en tire le résultat suivant, dans lequel $\Sigma$ désigne la suspension algébrique, $\overset{\mathbf{L}}{\underset{\mathfrak{S}_n}{\otimes}}$ le produit tensoriel dérivé au-dessus du groupe symétrique $\mathfrak{S}_n$ et $\mathbb{Z}_\epsilon$ la représentation de signe de $\mathfrak{S}_n$. 

\begin{cor}\label{crsn}
Il existe un quasi-isomorphisme
$$C_*\big(\N(\cc(A));\mathbb{Z}\big)\simeq\underset{n\in\mathbb{N}}{\bigoplus}\Sigma^n\big(A_{ab}^{\otimes n}\overset{\mathbf{L}}{\underset{\mathfrak{S}_n}{\otimes}}\mathbb{Z}_\epsilon\big)$$
naturel en l'objet $A$ de $\scg^{op}$.
\end{cor}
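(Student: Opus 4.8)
\emph{Esquisse de démonstration.} Le plan est de partir du théorème~\ref{th-hs}, qui fournit une équivalence naturelle en $A\in\scg$ entre $\N(\cc(A))$ et $Q\bb(A):=\Omega^\infty\Sigma^\infty(\bb(A))$, puis de calculer les chaînes de ce dernier à l'aide du scindement de Snaith. Notons d'abord que $\bb(A)$ est connexe, donc $Q\bb(A)$ l'est aussi. On appliquerait alors le scindement de Snaith \cite{Snaith} : pour tout espace pointé connexe $X$ il existe une équivalence stable, naturelle en $X$,
$$\Sigma^\infty_+\,QX\simeq\underset{n\geq 0}{\bigvee}\Sigma^\infty D_n(X),\qquad D_n(X):=(E\mathfrak{S}_n)_+\wedge_{\mathfrak{S}_n}X^{\wedge n},\quad D_0(X)=S^0.$$
En passant aux chaînes (à coefficients dans $\mathbb{Z}$), on obtiendrait un quasi-isomorphisme naturel $C_*(QX)\simeq\bigoplus_{n\geq 0}\tilde C_*(D_n(X))$, le terme $n=0$ donnant le facteur $\mathbb{Z}$ concentré en degré~$0$. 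Je spécialiserais ensuite à $X=\bb(A)$.

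Il resterait à identifier $\tilde C_*(D_n(\bb(A)))$. Via l'équivalence d'Eilenberg--Zilber, $\tilde C_*(\bb(A)^{\wedge n})$ est quasi-isomorphe, de façon $\mathfrak{S}_n$-équivariante, à $(\tilde C_*\bb(A))^{\otimes n}$, l'action de $\mathfrak{S}_n$ permutant les facteurs \emph{au signe de Koszul près}; le passage aux orbites homotopiques donnerait $\tilde C_*(D_n(\bb(A)))\simeq(\tilde C_*\bb(A))^{\otimes n}\overset{\mathbf{L}}{\underset{\mathfrak{S}_n}{\otimes}}\mathbb{Z}$. Comme $A$ est un groupe libre, $\bb(A)$ est un bouquet de cercles et $\tilde H_*(\bb(A))$ est concentré en degré~$1$, où il vaut $A_{ab}$; le complexe $\tilde C_*(\bb(A))$ est donc naturellement quasi-isomorphe, dans la catégorie dérivée, à $A_{ab}$ placé en degré~$1$. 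En élevant ce quasi-isomorphisme à la puissance tensorielle $n$-ième (de manière $\mathfrak{S}_n$-équivariante), on trouverait $(\tilde C_*\bb(A))^{\otimes n}\simeq\Sigma^n(A_{ab}^{\otimes n})$, le groupe $\mathfrak{S}_n$ agissant par permutation des facteurs tordue par la signature, ce signe provenant de la permutation de $n$ classes de degré impair. L'invariance de $-\overset{\mathbf{L}}{\underset{\mathfrak{S}_n}{\otimes}}\mathbb{Z}$ par quasi-isomorphisme équivariant, puis l'absorption du signe dans les coefficients, donneraient
$$\tilde C_*(D_n(\bb(A)))\simeq\Sigma^n\Big(A_{ab}^{\otimes n}\overset{\mathbf{L}}{\underset{\mathfrak{S}_n}{\otimes}}\mathbb{Z}_\epsilon\Big),$$
et la sommation sur $n$ fournirait l'énoncé.

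Le point délicat me paraît double. D'une part, il faut garantir la \emph{naturalité} en $A$ du scindement de Snaith au niveau des chaînes, afin que l'isomorphisme final le soit : elle découle de la naturalité des puissances étendues $D_n$ et du scindement (via la filtration de Snaith, ou un modèle fonctoriel type Barratt--Eccles), compatible avec la fonctorialité de $A\mapsto\bb(A)$ le long de $\scg^{op}\to\gr$ déjà exploitée au théorème~\ref{th-hs}. D'autre part, il faut contrôler soigneusement le signe, c'est-à-dire l'apparition de $\mathbb{Z}_\epsilon$ : on peut le suivre soit par la règle de Koszul dans l'équivalence d'Eilenberg--Zilber, soit, plus conceptuellement, en remarquant que $\mathfrak{S}_n$ opère par la signature sur $\tilde H_n((S^1)^{\wedge n})=\mathbb{Z}$. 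La formalité naturelle de $\tilde C_*(\bb(A))$, dont l'homologie est concentrée en un seul degré, est quant à elle standard.
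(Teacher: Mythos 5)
Votre démonstration est correcte et suit essentiellement la même voie que celle de l'article : scindement de Snaith appliqué à $\Omega^\infty\Sigma^\infty(\bb(A))$ (identifié à $\N(\cc(A))$ par le théorème~\ref{th-hs}), identification naturelle de $\tilde{C}_*(\bb(A))$ avec $\Sigma A_{ab}$, et apparition de $\mathbb{Z}_\epsilon$ par la règle des signes de Koszul sur les puissances tensorielles de classes de degré~$1$. Votre traitement explicite des puissances étendues $D_n$ et de la naturalité (modèle fonctoriel du scindement, que l'article justifie en invoquant le calcul de Goodwillie) ne fait que détailler ce que l'article laisse implicite.
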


\begin{proof}
Il est classique (cf. \cite{Snaith,Kahn,BEc} ou \cite[{\em Theorem}~8.2]{CMT}) que, pour un espace pointé connexe $X$, $\Omega^\infty\Sigma^\infty X$ a naturellement le même type d'homotopie stable que $\underset{n\in\mathbb{N}}{\bigvee}(X^{\wedge n})_{h\mathfrak{S}_n}$ (où l'indice $h\mathfrak{S}_n$ désigne les coïnvariants homotopiques sous l'action canonique de $\mathfrak{S}_n$) ; en particulier, le complexe $C_*(\Omega^\infty\Sigma^\infty X)$ est naturellement quasi-isomorphe à $\underset{n\in\mathbb{N}}{\bigoplus} \tilde{C}_*(X)^{\otimes n}\overset{\mathbf{L}}{\underset{\mathfrak{S}_n}{\otimes}}\mathbb{Z}$, où $\tilde{C}_*(X)$ désigne le complexe des chaînes {\em réduites} sur l'espace pointé $X$. Si $G$ est un groupe libre, alors $\tilde{C}_*(\bb G)$ est naturellement quasi-isomorphe à $\Sigma G_{ab}$, de sorte qu'on obtient un quasi-isomorphisme naturel
\[C_*(\Omega^\infty\Sigma^\infty(\bb G))\simeq\underset{n\in\mathbb{N}}{\bigoplus}\Sigma^n\big(G_{ab}^{\otimes n}\overset{\mathbf{L}}{\underset{\mathfrak{S}_n}{\otimes}}\mathbb{Z}_\epsilon\big)\;;\]
on conclut en prenant $G=\beta^{op}(A)$.
\end{proof}

Ce résultat permet d'obtenir le théorème suivant, dans lequel on note simplement $H_*^{st}(F)$ pour $H_*^{st}(\gamma^*F)$ --- d'autres abus de notation analogues seront effectués par la suite afin d'alléger les notations pour l'homologie stable.
\begin{thm}\label{thab1}
 Soit $F$ un foncteur de $\scg\md$. Le $\kk$-module gradué $H_*^{st}(F)$ est naturellement isomorphe à l'hyperhomologie du multicomplexe
 \[\kk\overset{\mathbf{L}}{\underset{\mathfrak{S}_\infty}{\otimes}}\underset{n\in\mathbb{N}}{\bigoplus}\Sigma^n\Big(\kk_\epsilon\overset{\mathbf{L}}{\underset{\mathfrak{S}_n}{\otimes}}\rm A_\kk^{\otimes n}\overset{\mathbf{L}}{\underset{\scg}{\otimes}}F\Big)\]
 où le groupe symétrique infini $\mathfrak{S}_\infty$ opère trivialement, $\kk_\epsilon$ est la représentation de signature sur $\kk$ du groupe symétrique $\mathfrak{S}_n$ et ${\rm A}_\kk$ désigne le foncteur de $\mdd\scg$ composé de $\beta^{op} : \scg^{op}\to\mathbf{gr}$, de l'abélianisation $\gr\to\mathbf{Ab}$ et de la tensorisation par $\kk$, $\mathbf{Ab}\to\,_\kk\mathbf{Mod}$.
\end{thm}

\begin{proof} Par la proposition~\ref{dv-gl}, $H_*^{st}(F)$ est naturellement isomorphe à l'hyperhomologie de
\[\kk\overset{\mathbf{L}}{\underset{\mathfrak{S}_\infty\times\G}{\otimes}}\pi^*\gamma^*(F)\simeq\kk\overset{\mathbf{L}}{\underset{\mathfrak{S}_\infty}{\otimes}}\Big(\kk\overset{\mathbf{L}}{\underset{\G}{\otimes}}\gamma^*(F)\Big)\]
(avec action triviale de $\mathfrak{S}_\infty$). La proposition~\ref{pr-ssga} montre par ailleurs que $\kk\overset{\mathbf{L}}{\underset{\G}{\otimes}}\gamma^*(F)$ est naturellement quasi-isomorphe à $\mathbf{L}\gamma_!(\kk)\overset{\mathbf{L}}{\underset{\scg}{\otimes}}F$, de sorte que la conclusion résulte du corollaire~\ref{crsn} et du quasi-isomorphisme naturel \eqref{dgekc} de la proposition~\ref{pr-ssga}.
\end{proof}

Cette première forme de notre {\em décomposition de Hodge} pour l'homologie stable des groupes d'automorphismes des groupes libres demeure théorique, en raison de l'inaccessibilité de la catégorie $\scg$ aux calculs homologiques directs. Nous pensons toutefois qu'elle devrait permettre d'aller au-delà du théorème~\ref{thab2} ci-après (voir notamment la section~\ref{s-conj}), par l'intermédiaire duquel nous exploiterons le théorème~\ref{thab1}.

Sur $\kk=\mathbb{Q}$, la semi-simplicité des représentations des groupes symétriques conduit à la simplification suivante du théorème~\ref{thab1} :

\begin{cor}\label{cor-qab} Soient $F$ un foncteur de $\scg\text{-}\,_\mathbb{Q}\mathbf{Mod}$ et $m\in\mathbb{N}$. Il existe un isomorphisme naturel
\[H_m^{st}(F)\simeq\bigoplus_{i+j=m}{\rm Tor}^\scg_i(\Lambda^j({\rm A}_\mathbb{Q}),F)\]
où $\Lambda^j$ désigne la $j$-ème puissance extérieure (sur les $\mathbb{Q}$-espaces vectoriels).
\end{cor}

\begin{rem}\label{rq-pira} Ce corollaire peut s'établir directement à partir du théorème~\ref{th-hs}, sans invoquer le scindement de Snaith, à l'aide d'un raisonnement très analogue à celui de Pirashvili \cite[{\em Theorem}~2.7]{P-hodge}, dans la catégorie $\Gamma\md$, qui donne une approche de la décomposition de Hodge pour l'homologie de Hochschild des $\mathbb{Q}$-algèbres commutatives par l'homologie des foncteurs et en permet des généralisations à l'homologie de Hochschild supérieure (voir \cite[{\em Corollary}~2.5]{P-hodge}).

En effet, l'espace pointé $\Omega^\infty\Sigma^\infty\bb(A)$ a naturellement le même type d'homotopie {\em rationnelle} que $\bb(A_{ab})$, dont l'homologie (rationnelle) est naturellement isomorphe à $\Lambda^*(A_{ab}\otimes\mathbb{Q})$. Cela permet de déduire le corollaire~\ref{cor-qab} du théorème~\ref{th-hs}, en montrant que l'on sait que la suite spectrale
\[E^2_{i,j}={\rm Tor}_i^\scg(\mathbf{L}_j(\gamma_!)(\mathbb{Q}),F)\Rightarrow H_{i+j}(\G;\gamma^*F)\]
(proposition~\ref{pr-ssga}, \eqref{ssgg}) s'effondre à la deuxième page et que les extensions associées à la filtration correspondante sont triviales. Cela s'obtient par un argument de formalité, issu de la théorie de l'obstruction pour les complexes de chaînes, due à Dold \cite{Dold} (voir aussi \cite[Proposition~1.6]{P-hodge}) : celle-ci montre qu'il suffit de vérifier l'annulation de certains groupes d'extensions entre puissances extérieures de l'abélianisation rationalisée. Cette annulation se déduit des calculs cohomologiques de Vespa \cite{ves-cal} et du corollaire~\ref{sco-ext} (qui constitue une variante en cohomologie du théorème~\ref{th-scoc}).
\end{rem}

Dans l'énoncé suivant, les actions des groupes symétriques sont les mêmes que dans le théorème~\ref{thab1}.

\begin{thm}\label{thab2}
 Soit $F$ un foncteur analytique de $\mdd\gr$. Il existe un isomorphisme naturel de $\kk$-modules gradués
 \[H_*^{st}(F)\simeq\mathbb{H}_*\left(\kk\overset{\mathbf{L}}{\underset{\mathfrak{S}_\infty}{\otimes}}\underset{n\in\mathbb{N}}{\bigoplus}\Sigma^n\Big(F\overset{\mathbf{L}}{\underset{\gr}{\otimes}}\mathfrak{a}_\kk^{\otimes n}\overset{\mathbf{L}}{\underset{\mathfrak{S}_n}{\otimes}}\kk_\epsilon\Big)\right)\]
 où $\mathfrak{a}_\kk$ désigne le foncteur d'abélianisation tensorisée par $\kk$ (dans $\gr\md$) et $\mathbb{H}_*$ l'hyperhomologie.
\end{thm}

\begin{proof} Le théorème~\ref{th-scoc} montre que les morphismes naturels
\[{\rm A}_\kk^{\otimes n}\overset{\mathbf{L}}{\underset{\scg}{\otimes}}\beta^*(F)\to F\overset{\mathbf{L}}{\underset{\gr}{\otimes}}\mathfrak{a}_\kk^{\otimes n}\]
 qu'induit $\beta$ sont des quasi-isomorphismes. La conclusion découle donc du théorème~\ref{thab1}.
\end{proof}

Là encore, le résultat se simplifie nettement rationnellement :
\begin{cor}\label{corq} Soient $F$ un foncteur analytique de $_\mathbb{Q}\mdd\gr$ et $m\in\mathbb{N}$. Il existe un isomorphisme naturel
\[H_m^{st}(F)\simeq\bigoplus_{i+j=m}{\rm Tor}^\gr_i(F,\Lambda^j(\mathfrak{a}_\mathbb{Q})).\]
\end{cor}

On peut également déduire du théorème~\ref{thab2} le résultat plus précis suivant, dans lequel on se place dans le cas universel $\kk=\mathbb{Z}$. Sa première assertion est due à Randal-Williams et Wahl.

\begin{cor}\label{cor-inv}
 Soient $d$, $i$, $m$ des entiers naturels tels que $m\geq 2(i+d)+3$ et $F : \gr^{op}\to\mathbf{Ab}$ un foncteur polynomial de degré $d$. 
 \begin{enumerate}
 \item (Randal-Williams et Wahl \cite{RWW}) Le morphisme canonique $H_i({\rm Aut}(\mathbb{Z}^{*m});F(\mathbb{Z}^{*m}))\to H_i^{st}(F)$
est un isomorphisme.
 \item Il existe un isomorphisme naturel de groupes abéliens gradués
 \[H_*^{st}(F)\simeq\mathbb{H}_*\left(\mathbb{Z}\overset{\mathbf{L}}{\underset{\mathfrak{S}_\infty}{\otimes}}\;\underset{0\leq n\leq d}{\bigoplus}\Sigma^n\Big(F\overset{\mathbf{L}}{\underset{\gr}{\otimes}}\mathfrak{a}^{\otimes n}\overset{\mathbf{L}}{\underset{\mathfrak{S}_n}{\otimes}}\mathbb{Z}_\epsilon\Big)\right).\]
  \item Supposons que les valeurs de $F$ sont des groupes abéliens de type fini. Alors le groupe abélien $H_i({\rm Aut}(\mathbb{Z}^{*m});F(\mathbb{Z}^{*m}))$ est de type fini.
  \item Supposons que $F$ est à valeurs dans les $\mathbb{Z}[\frac{1}{r!}]$-modules, où $r=\min(i,d)$. Il existe un isomorphisme naturel
\[H_i^{st}(F)\simeq\underset{t+n\leq d}{\underset{s+t+n=i}{\bigoplus}}H_s(\mathfrak{S}_\infty;{\rm Tor}^\gr_t(F,\mathfrak{a}^{\otimes n})^\epsilon_{\mathfrak{S}_n})\]
où $\mathfrak{S}_\infty$ opère trivialement et l'exposant $\epsilon$ indique que l'action canonique de $\mathfrak{S}_n$ est tordue par la signature.
  \item Supposons que $F$ est réduit (i.e. nul sur le groupe trivial) et à valeurs dans les $\mathbb{Z}[\frac{1}{s!}]$-modules, où $s=\max(2,i)$. Alors $H_i^{st}(F)$ est nul pour $i>d$ et isomorphe sinon à
\[\underset{n>0}{\underset{t+n=i}{\bigoplus}}{\rm Tor}^\gr_t(F,\mathfrak{a}^{\otimes n})^\epsilon_{\mathfrak{S}_n}.\]
 \end{enumerate}
\end{cor}

\begin{proof} La première assertion est le théorème de stabilité à coefficients tordus pour l'homologie des groupes d'automorphismes des groupes libres de Randal-Williams et Wahl \cite[{\em Theorem}~5.4]{RWW}.

Comme $F$ est un foncteur polynomial de degré $d$ de $\mdd\gr$, on a ${\rm Tor}^\gr_i(F,\mathfrak{a}^{\otimes n})=0$ pour $i>d-n$. Cette propriété se déduit de la résolution barre (déjà utilisée dans ce contexte dans \cite[§\,5\,A]{PJ}) ; elle apparaît explicitement dans \cite[Remarque~5.3]{DV15} pour $n=1$ et le cas général peut se trouver dans \cite[Proposition~4.1]{DPV}. Par conséquent, la deuxième assertion résulte du théorème~\ref{thab2}.

Cette même résolution projective explicite de $\mathfrak{a}^{\otimes n}$ issue de la résolution barre montre que ${\rm Tor}^\gr_i(F,\mathfrak{a}^{\otimes n})$ est un groupe abélien de type fini pour tous $n$ et $i$ si $F$ prend ses valeurs dans les groupes abéliens de type fini ; ce groupe est par ailleurs évidemment nul pour $n=0$ si $F$ est réduit.

On en déduit la troisième assertion, ainsi que la quatrième, en utilisant la formule de Künneth et le fait que l'homologie en degré non nul de $\mathfrak{S}_n$ est nulle à coefficients dans une représentation définie sur $\mathbb{Z}[1/n!]$.

Quant à la dernière assertion, elle se déduit de la précédente et du résultat de stabilité homologique suivant  : si $M$ est un groupe abélien, qu'on munit de l'action triviale des groupes symétriques, le morphisme canonique $H_i(\mathfrak{S}_n;M)\to H_i(\mathfrak{S}_\infty;M)$ est un isomorphisme pour $n>2i$ ; si $M$ est un $\mathbb{Z}[\frac{1}{2}]$-module c'est même un isomorphisme pour $n>i$. Cela provient par exemple des résultats de Nakaoka \cite{Nak}.
\end{proof}

\section{Variante en cohomologie et structures multiplicatives}\label{sect-mult}

Soit $X$ un foncteur de $\gr\md$. Nous noterons, de façon duale de $H_*^{st}$,
\[H_{st}^*(X):=\underset{n\in\mathbb{N}}{\lim}\,H^*({\rm Aut}(\mathbb{Z}^{*n});X(\mathbb{Z}^{*n}))\]
la cohomologie stable des groupes d'automorphismes des groupes libres à coefficients tordus par $X$, la limite étant relative aux monomorphismes de groupes ${\rm Aut}(\mathbb{Z}^{*n})\to {\rm Aut}(\mathbb{Z}^{*(n+1)})$ induits par $-*\mathbb{Z}$ et, sur les coefficients, par les épimorphismes de groupes $\mathbb{Z}^{*(n+1)}\to\mathbb{Z}^{*n}$ de projection sur les $n$ premiers facteurs. En général, cette cohomologie stable se comporte moins bien que son analogue homologique, en raison de la possible non-annulation de la $\underset{n\in\mathbb{N}}{\lim}^1$ correspondante. Cela n'advient toutefois pas si $X$ est {\em polynomial} de degré $d$. En effet, dans ce cas, la flèche canonique $H_{st}^i(X)\to H^i({\rm Aut}(\mathbb{Z}^{*n});X(\mathbb{Z}^{*n}))$ est un isomorphisme lorsque $n\geq 2(i+d)+3$. Cette variante en cohomologie du résultat précédent se déduit facilement des résultats de \cite{RWW} ; pour la commodité du lecteur, nous la démontrerons ci-dessous, après quelques rappels généraux.

Soit $J$ un cogénérateur injectif des $\kk$-modules. Nous noterons $V\mapsto V^\vee$ le foncteur de dualité ${\rm Hom}_\kk(-,J) :\ _\kk\mathbf{Mod}^{op}\to\ _\kk\mathbf{Mod}$, qui est exact et fidèle. Nous noterons encore de la même façon le foncteur exact et fidèle $\mdd\C\to\C\md$ (où $\C$ est une petite catégorie quelconque) qu'il induit entre catégories de foncteurs, par postcomposition.

\begin{lm}\label{lm-dte} Soient $\C$ une petite catégorie, $F$ et $T$ des foncteurs de $\mdd\C$ et $\C\md$ respectivement et $n\in\mathbb{N}$. Il existe un isomorphisme naturel
\[\big({\rm Tor}_n^\C(F,T)\big)^\vee\simeq {\rm Ext}^n_{\C\md}(T,F^\vee)\]
de $\kk$-modules.
\end{lm}

\begin{proof} Lorsque $T$ est un foncteur projectif du type $P^\C_c$, le résultat découle du lemme de Yoneda. Le cas général s'en déduit formellement par comparaison de foncteurs cohomologiques universels.
\end{proof}

Dans le cas de la (co)homologie des groupes, on a en particulier, la limite étant duale de la colimite :

\begin{lm}\label{dualhst} Si $F$ est un foncteur de $\mdd\gr$ et $n$ un entier naturel. On dispose d'un isomorphisme naturel de $\kk$-modules $H^n_{st}(F^\vee)\simeq \big(H_n^{st}(F)\big)^\vee$.
\end{lm}
  
Nous sommes maintenant en mesure de déduire la variante suivante en cohomologie de certains de nos résultats principaux (on laisse au lecteur le soin de traductions exhaustives de nos résultats homologiques en termes de cohomologie). Le premier point est dû à Randal-Williams et Wahl, comme indiqué plus haut.
  
\begin{thm}\label{th-coh} Soient $i$, $m$, $d$ des entiers naturels et $X$ un foncteur de $\gr\md$ polynomial de degré $d$.
\begin{enumerate}
\item Le morphisme canonique $H_{st}^i(X)\to H^i({\rm Aut}(\mathbb{Z}^{*m});X(\mathbb{Z}^{*m}))$ est un isomorphisme si $m\geq 2(i+d)+3$.
\item La cohomologie stable $H_{st}^*(X)$ est naturellement isomorphe à l'hypercohomologie
\[\mathbb{H}^*\left(\mathfrak{S}_\infty;\bigoplus_{n\leq d}\Sigma^n\Big(\mathbb{H}^*\big(\mathfrak{S}_n;\mathbf{R}{\rm Hom}_\gr(\mathfrak{a}_\kk^{\otimes n},X)_\epsilon\big)\Big)\right)\]
(où $\mathfrak{S}_\infty$ opère trivialement et $\mathfrak{S}_n$ par permutation des facteurs tordue par la signature).
\item Si $\kk=\mathbb{Q}$, il existe un isomorphisme naturel
\[H^i_{st}(X)\simeq\bigoplus_{r+s=i}{\rm Ext}^r_{\gr\md}(\Lambda^s(\mathfrak{a}_\mathbb{Q}),X).\]
\end{enumerate}
\end{thm}

\begin{proof} Si $X=F^\vee$ pour un foncteur $F$ de $\mdd\gr$ (qui est alors polynomial de degré $d$, comme $X$), le résultat découle directement des lemmes précédents et des corollaires~\ref{cor-inv} et~\ref{corq}.

Pour établir le premier résultat, on raisonne par récurrence sur le degré cohomologique en formant une suite exacte courte
\[0\to X\to F^\vee\to Y\to 0\]
avec $F=X^\vee$ (le premier morphisme étant l'unité de l'auto-adjonction du foncteur de dualité), dans laquelle $F$ et $Y$ sont polynomiaux de degré au plus $d$. Le résultat de stabilité homologique pour les groupes d'automorphismes des groupes libres à coefficients dans $F$  \cite[{\em Theorem}~5.4]{RWW} et l'hypothèse de récurrence appliquée à $Y$ permettent d'obtenir le pas de la récurrence pour $X$ par les suites exactes longues associées pour la cohomologie des groupes d'automorphismes des groupes libres.

Les assertions suivantes s'obtiennent de façon strictement analogue à l'aide des corollaires~\ref{cor-inv} et~\ref{corq} --- on note que $H^*_{st}$ définit un foncteur cohomologique sur les foncteurs polynomiaux de $\gr\md$, grâce au premier point. (Plutôt que de raisonner par récurrence sur le degré cohomologique, on peut, de façon équivalente, résoudre le foncteur $X$ par des foncteurs du type $F^\vee$ avec $F$ polynomial de degré $d$ dans $\mdd\gr$.)
\end{proof}

\paragraph*{Structures multiplicatives externes}
Soient $F$ et $G$ des foncteurs de $\mdd\gr$, $A$ et $B$ des groupes libres de rang fini. Les épimorphismes canoniques $A*B\to A$ et $A*B\to B$ induisent une application $\kk$-linéaire
\[F(A)\otimes G(B)\to F(A*B)\otimes G(A*B)\]
qui est équivariante pour l'action de ${\rm Aut}(A)\times {\rm Aut}(B)$ (donnée au but par restriction de l'action tautologique de ${\rm Aut}(A*B)$ le long du morphisme canonique ${\rm Aut}(A)\times {\rm Aut}(B)\to {\rm Aut}(A*B)$), d'où des morphismes naturels
\[H_*({\rm Aut}(A);F(A))\otimes H_*({\rm Aut}(B);G(B))\to H_*\big({\rm Aut}(A)\times {\rm Aut}(B);F(A)\otimes G(B)\big)\dots\]
\[\to H_*({\rm Aut}(A*B);F(A*B)\otimes G(A*B))\]
de $\kk$-modules gradués. Ceux-ci sont compatibles à la stabilisation en $A$ et $B$ et fournissent donc par passage à la colimite un produit externe en homologie stable
\[H_*^{st}(F)\otimes H_*^{st}(G)\to H_*^{st}(F\otimes G)\]
naturel et monoïdal symétrique en $F$ et $G$.

Par ailleurs, si $X$ et $Y$ sont des foncteurs de $\gr\md$ et que $A$ est un groupe libre de rang fini, le morphisme diagonal sur le groupe ${\rm Aut}(A)$ induit un morphisme
\[H^*({\rm Aut}(A);X(A))\otimes H^*({\rm Aut}(A);Y(A))\to H^*({\rm Aut}(A);X(A)\otimes Y(A))\]
qui fournit un produit externe en cohomologie stable
\[H_{st}^*(X)\otimes H_{st}^*(Y)\to H_{st}^*(X\otimes Y)\]
naturel et monoïdal symétrique en $X$ et $Y$.

Nous décrivons maintenant ces structures multiplicatives à travers nos identifications de $H^{st}_*(F)$ et $H^*_{st}(X)$ (lorsque $F$ et $X$ sont polynomiaux) à l'aide de (co)homologie de foncteurs. Nous nous limiterons à le faire lorsque $\kk=\mathbb{Q}$, hypothèse que l'on fait dans toute la suite de cette section. Le cas général peut se traiter de façon analogue mais est beaucoup plus lourd à écrire ; de plus, les applications que nous avons en vue concernent essentiellement cette situation rationnelle.

\begin{pr}\label{pr-muh} Soient $F$ et $G$ des foncteurs polynomiaux de $\mdd\gr$. À travers l'isomorphisme du corollaire~\ref{cor-qab}, le produit externe $H_i^{st}(F)\otimes H_j^{st}(G)\to H_{i+j}^{st}(F\otimes G)$ s'identifie au morphisme
\[\bigoplus_{n\in\mathbb{N}}{\rm Tor}^\gr_{i-n}(F,\Lambda^n(\mathfrak{a}_\mathbb{Q}))\otimes\bigoplus_{m\in\mathbb{N}}{\rm Tor}^\gr_{j-m}(G,\Lambda^m(\mathfrak{a}_\mathbb{Q}))\to\bigoplus_{l\in\mathbb{N}}{\rm Tor}^\gr_{i+j-l}(F\otimes G,\Lambda^l(\mathfrak{a}_\mathbb{Q}))\]
de composantes
\[{\rm Tor}^\gr_{i-n}(F,\Lambda^n(\mathfrak{a}_\mathbb{Q}))\otimes{\rm Tor}^\gr_{j-m}(G,\Lambda^m(\mathfrak{a}_\mathbb{Q}))\to{\rm Tor}^\gr_{i+j-(n+m)}(F\otimes G,\Lambda^{n+m}(\mathfrak{a}_\mathbb{Q}))\]
données par la composée de l'isomorphisme de Künneth
\[{\rm Tor}^\gr_*(F,\Lambda^n(\mathfrak{a}_\mathbb{Q}))\otimes{\rm Tor}^\gr_*(G,\Lambda^m(\mathfrak{a}_\mathbb{Q}))\xrightarrow{\simeq}{\rm Tor}^{\gr\times\gr}_*(F\boxtimes G,\Lambda^n(\mathfrak{a}_\mathbb{Q})\boxtimes\Lambda^m(\mathfrak{a}_\mathbb{Q})),\]
du morphisme
\[{\rm Tor}^{\gr\times\gr}_*(F\boxtimes G,\Lambda^n(\mathfrak{a}_\mathbb{Q})\boxtimes\Lambda^m(\mathfrak{a}_\mathbb{Q}))\to {\rm Tor}^{\gr\times\gr}_*((F\otimes G)\circ *,\Lambda^{n+m}(\mathfrak{a}_\mathbb{Q})\circ *)\]
induit par les inclusions canoniques $F\boxtimes G\to (F\otimes G)\circ *$ et $\Lambda^n(\mathfrak{a}_\mathbb{Q})\boxtimes\Lambda^m(\mathfrak{a}_\mathbb{Q})\to\Lambda^{n+m}(\mathfrak{a}_\mathbb{Q})\circ *$, et du morphisme
\[{\rm Tor}^{\gr\times\gr}_*((F\otimes G)\circ *,\Lambda^{n+m}(\mathfrak{a}_\mathbb{Q})\circ *)\to {\rm Tor}^\gr_*(F\otimes G,\Lambda^{n+m}(\mathfrak{a}_\mathbb{Q}))\]
induit par le foncteur $* : \gr\times\gr\to\gr$.
\end{pr}

\begin{pr}\label{pr-muc} Soient $X$ et $Y$ des foncteurs polynomiaux de $\gr\md$. À travers l'isomorphisme du théorème~\ref{th-coh}, le produit externe $H^i_{st}(X)\otimes H^j_{st}(Y)\to H^{i+j}_{st}(X\otimes Y)$ s'identifie au morphisme
\[\xymatrix{\underset{n\in\mathbb{N}}{\bigoplus}{\rm Ext}^{i-n}_{\gr\md}(\Lambda^n(\mathfrak{a}_\mathbb{Q}),X)\otimes\underset{m\in\mathbb{N}}{\bigoplus}{\rm Ext}^{j-m}_{\gr\md}(\Lambda^m(\mathfrak{a}_\mathbb{Q}),Y)\ar[d]\\
\underset{l\in\mathbb{N}}{\bigoplus}{\rm Ext}^{i+j-l}_{\gr\md}(\Lambda^l(\mathfrak{a}_\mathbb{Q}),X\otimes Y)
}\]
de composantes
\[{\rm Ext}^{i-n}_{\gr\md}(\Lambda^n(\mathfrak{a}_\mathbb{Q}),X)\otimes {\rm Ext}^{j-m}_{\gr\md}(\Lambda^m(\mathfrak{a}_\mathbb{Q}),Y)\to {\rm Ext}^{i+j-(n+m)}_{\gr\md}(\Lambda^{n+m}(\mathfrak{a}_\mathbb{Q}),X\otimes Y)\]
données par la composée du morphisme
\[{\rm Ext}^*_{\gr\md}(\Lambda^n(\mathfrak{a}_\mathbb{Q}),X)\otimes {\rm Ext}^*_{\gr\md}(\Lambda^m(\mathfrak{a}_\mathbb{Q}),Y)\to {\rm Ext}^*_{(\gr\times\gr)\md}(\Lambda^n(\mathfrak{a}_\mathbb{Q})\otimes\Lambda^m(\mathfrak{a}_\mathbb{Q}),X\otimes Y)\]
induit par le produit tensoriel et du morphisme 
\[{\rm Ext}^*_{(\gr\times\gr)\md}(\Lambda^n(\mathfrak{a}_\mathbb{Q})\otimes\Lambda^m(\mathfrak{a}_\mathbb{Q}),X\otimes Y)\to{\rm Ext}^*_{\gr\md}(\Lambda^{n+m}(\mathfrak{a}_\mathbb{Q}),X\otimes Y)\]
induit par le monomorphisme canonique $\Lambda^{n+m}(\mathfrak{a}_\mathbb{Q})\to\Lambda^n(\mathfrak{a}_\mathbb{Q})\otimes\Lambda^m(\mathfrak{a}_\mathbb{Q})$.
\end{pr}

\begin{proof}[Démonstration des propositions~\ref{pr-muh} et~\ref{pr-muc}] Tout d'abord, l'isomorphisme de la proposition~\ref{dv-gl} est monoïdal --- c'est un fait totalement général provenant du cadre formel de \cite{DV} (voir notamment le §\,4.1 de cette référence). Il en est de même pour l'isomorphisme du théorème~\ref{th-scoc} (ou de sa variante en cohomologie, le corollaire~\ref{sco-ext}, pour la proposition~\ref{pr-muc}), car le foncteur $\beta$ est monoïdal. Il suffit pour terminer de vérifier que le quasi-isomorphisme du corollaire~\ref{crsn} est monoïdal (où la structure monoïdale sur le foncteur $\cc$ provient de ce que le foncteur $\gamma$ est monoïdal), ce qu'établit la proposition~\ref{mpr}.
\end{proof}

\begin{rem} Puisqu'on travaille sur le {\em corps} $\mathbb{Q}$, on dispose également de {\em co}produits externes en homologie $H^{st}_*(F\otimes G)\to H^{st}_*(F)\otimes H^{st}_*(G)$ et en cohomologie $H_{st}^*(X\otimes Y)\to H_{st}^*(X)\otimes H_{st}^*(Y)$, qui se lisent aisément via nos isomorphismes, pour des coefficients polynomiaux (en utilisant aussi, pour le cas de la cohomologie, que la représentation triviale des groupes d'automorphismes des groupes libres et les foncteurs $\Lambda^i(\mathfrak{a}_\mathbb{Q})$ possèdent une résolution projective de type fini). Ceux-ci sont duaux des produits externes en cohomologie et en homologie respectivement. Pour des foncteurs à valeurs de dimension finie, cette dualité vaut au sens le plus immédiat du terme, par l'intermédiaire du lemme~\ref{dualhst}.
\end{rem}

La proposition~\ref{pr-muc} donne également une description concrète des injections canoniques ${\rm Ext}^i_{\gr\md}(\Lambda^j(\mathfrak{a}_\mathbb{Q}),F)\to H^{i+j}_{st}(F)$ déduites du théorème~\ref{th-coh}. Dans l'énoncé qui suit, on désigne par un point (voire l'absence de symbole) le produit externe sur $H^*_{st}$ ou le produit externe pour $\mathrm{Ext}^*_{\gr\md}$ (donné par $\mathrm{Ext}^*_{\gr\md}(A,B)\otimes \mathrm{Ext}^*_{\gr\md}(C,D)\to\mathrm{Ext}^*_{\gr\md}(A\otimes C,B\otimes D)$). On y utilise également une forme du produit de composition, ou produit de Yoneda, qui fournit des morphismes naturels $\mathrm{Ext}^i_{\gr\md}(F,G)\otimes H^j_{st}(F)\xrightarrow{\cup} H^{i+j}_{st}(G)$, qu'on peut définir comme la composée du morphisme de
${\rm Ext}^i_{\gr\md}(F,G)\otimes H^j({\rm Aut}(T);F(T))$ vers ${\rm Ext}^i_{\mathbb{Q}[{\rm Aut}(T)]}(F(T),G(T))\otimes H^j({\rm Aut}(T);F(T))$ induit par l'évaluation sur un groupe libre $T$ de rang assez grand et du produit de composition pour la cohomologie de ${\rm Aut}(T)$.

\begin{cor}\label{corec} Soient $i$ et $j$ des entiers naturels et $h$ un élément non nul de $H^1_{st}(\mathfrak{a}_\mathbb{Q})\simeq\mathbb{Q}$. Notons $\theta_j$ l'image de $h^j\in H^j_{st}(\mathfrak{a}_\mathbb{Q}^{\otimes j})$ dans $H^j_{st}(\Lambda^j(\mathfrak{a}_\mathbb{Q}))$ par l'application induite par le morphisme induit par $\mathfrak{a}_\mathbb{Q}^{\otimes j}\twoheadrightarrow\Lambda^j(\mathfrak{a}_\mathbb{Q})$. Alors l'application naturelle
 ${\rm Ext}^i_{\gr\md}(\Lambda^j(\mathfrak{a}_\mathbb{Q}),F)\to H^{i+j}_{st}(F)$ (où $F$ est polynomial dans $\gr\md$) du théorème~\ref{th-coh} est, à un scalaire inversible (indépendant de $F$) près, le produit de composition $-\cup\theta_j$.
\end{cor}

\begin{proof}Pour des raisons formelles de comparaison de foncteurs cohomologiques, il existe des classes $\theta'_j\in H^j_{st}(\Lambda^j(\mathfrak{a}_\mathbb{Q}))$ telles que l'application soit égale à $-\cup\theta'_j$. 

La proposition~\ref{pr-muc} se traduit par le fait que $(f\cup\theta'_j).(g\cup\theta'_k)$ (pour $f\in {\rm Ext}^i_{\gr\md}(\Lambda^j(\mathfrak{a}_\mathbb{Q}),F)$ et $g\in\mathrm{Ext}^l_{\gr\md}(\Lambda^k(\mathfrak{a}_\mathbb{Q}),G)$, avec $G$ polynomial), qui égale $(f.g)\cup (\theta'_j.\theta'_k)$ au signe près, coïncide avec $(f.g)\cup(\nu^{j,k}_*\theta'_{j+k})$, où $\nu^{j,k} : \Lambda^{j+k}(\mathfrak{a}_\mathbb{Q})\hookrightarrow\Lambda^j(\mathfrak{a}_\mathbb{Q})\otimes\Lambda^k(\mathfrak{a}_\mathbb{Q})$ est l'injection canonique, d'où $\nu^{j,k}_*\theta'_{j+k}=\theta'_j.\theta'_k$ au signe près. Si $m^{j,k} : \Lambda^j(\mathfrak{a}_\mathbb{Q})\otimes\Lambda^k(\mathfrak{a}_\mathbb{Q})\twoheadrightarrow\Lambda^{j+k}(\mathfrak{a}_\mathbb{Q})$ est l'application canonique, $m^{j,k}\nu^{j,k}$ est l'identité à un scalaire non nul près, donc on en déduit que $\theta'_{j+k}=m^{j,k}_*\theta'_j.\theta'_k$ à un scalaire non nul près. Comme $H^1_{st}(\mathfrak{a}_\mathbb{Q})$ est de dimension $1$, $\theta'_j$ et $\theta_j$ coïncident nécessairement à un scalaire non nul près pour $j=1$ ; la relation précédente permet d'en déduire le cas général.
\end{proof}

\paragraph*{Structures multiplicatives internes} Si $F$ est un foncteur de $\mdd\gr$ muni d'une structure d'algèbre $F\otimes F\xrightarrow{\mu} F$, on peut composer le produit externe $H_*^{st}(F)\otimes H_*^{st}(F)\to H_*^{st}(F\otimes F)$ avec le morphisme $H_*^{st}(F\otimes F)\to H_*^{st}(F)$ induit par $\mu$ pour obtenir une structure d'algèbre sur $H^{st}_*(F)$.

De même, si $X$ est un foncteur de $\gr\md$, toute structure d'algèbre $\mu : X\otimes X\to X$ sur $X$ induit une structure d'algèbre sur $H^*_{st}(X)$ en composant le produit externe $H^*_{st}(X)\otimes H^*_{st}(X)\to H^*_{st}(X\otimes X)$ avec $H^*_{st}(\mu)$. Celle-ci apparaîtra dans le théorème~\ref{coralg}. 

\section{Calculs de cohomologie stable}\label{scst}

\paragraph*{Calculs rationnels}

Nous retrouvons ici un résultat (théorème~\ref{torw} de l'introduction) établi indépendamment par Randal-Williams \cite[{\em Corollary}~D]{RW2}.

\begin{thm}[Randal-Williams]\label{trw}
 Soient $n$, $r$ et $d$ des entiers tels que $r\geq 2(n+d)+3$ et $G$ un groupe libre de rang $r$. Notons $H$ la représentation $G_{ab}\otimes\mathbb{Q}$ de ${\rm Aut}(G)$. Le $\mathbb{Q}$-espace vectoriel $H^n({\rm Aut}(G);\Lambda^d(H))$ est nul si $n\neq d$ ; pour $n=d$, sa dimension est le nombre de partitions de $d$. Le $\mathbb{Q}$-espace vectoriel $H^n({\rm Aut}(G);S^d(H))$ est nul sauf pour $n=d=0$ ou $1$, auquel cas il est de dimension $1$.
\end{thm}

\begin{proof}
Vespa a montré \cite[{\em Theorem}~4.2]{ves-cal} que le $\mathbb{Q}$-espace vectoriel ${\rm Ext}^r_{\gr\md}(\Lambda^s(\mathfrak{a}_\mathbb{Q}),\Lambda^d(\mathfrak{a}_\mathbb{Q}))$ a pour dimension le nombre de partitions de l'entier $d$ en $s$ parts si $r=d-s$ et $0$ sinon, tandis que ${\rm Ext}^r_{\gr\md}(\Lambda^s(\mathfrak{a}_\mathbb{Q}),S^d(\mathfrak{a}_\mathbb{Q}))$ est de dimension $1$ si $d\in\{0,1\}$ et $r=d-s$, et est nul dans tous les autres cas. On en déduit le résultat en appliquant le théorème~\ref{th-coh}.
\end{proof}

On peut préciser la structure multiplicative :

\begin{thm}\label{coralg}
  La cohomologie rationnelle stable des groupes d'automorphismes des groupes libres à coefficients dans le foncteur gradué algèbre extérieure est une algèbre de polynômes en une infinité dénombrable d'indéterminées, bigraduée par le degré cohomologique et le degré interne de l'algèbre extérieure.

 Plus précisément, si l'on note $\xi_n$, pour $n\in\mathbb{N}^*$, l'élément de bidegré $(n,n)$ de cette algèbre de cohomologie, image par l'isomorphisme du théorème~\ref{th-coh} d'un générateur du $\mathbb{Q}$-espace vectoriel de dimension~$1$ ${\rm Ext}^{n-1}_{\gr\md}(\mathfrak{a}_\mathbb{Q},\Lambda^n(\mathfrak{a}_\mathbb{Q}))$, alors cette algèbre est une algèbre symétrique sur les $\xi_n$.
\end{thm}

\begin{proof} On applique la proposition~\ref{pr-muc} et la description de la structure multiplicative trigraduée sur $({\rm Ext}^*_{\gr\md}(\Lambda^s(\mathfrak{a}_\mathbb{Q}),\Lambda^d(\mathfrak{a}_\mathbb{Q})))_{s,d}$ déduite de celle de $({\rm Ext}^*_{\gr\md}(\mathfrak{a}_\mathbb{Q}^{\otimes s},\mathfrak{a}_\mathbb{Q}^{\otimes d}))_{s,d}$, donnée par Vespa \cite[{\em Proposition}~3.1\,(2)]{ves-cal}, en prenant les coïnvariants sous l'action des groupes symétriques.
\end{proof}

Il est sans doute possible de déterminer la structure multiplicative donnée par le théorème~\ref{coralg} à l'aide des méthodes topologiques de Randal-Williams \cite{RW2}, mais le résultat ne figure pas explicitement dans son travail.

\begin{rem}\label{rqk}
 Dans \cite{K-Magnus}, Kawazumi introduit des classes de cohomologie $\bar{h}_p\in H^p_{st}(\Lambda^p(\mathfrak{a}_\mathbb{Q}))$ ; il montre dans \cite{K-mm} qu'elles sont algébriquement indépendantes dans l'algèbre bigraduée $H^*_{st}(\Lambda^\bullet(\mathfrak{a}_\mathbb{Q}))$ (c'est même vrai en restreignant la cohomologie des groupes d'automorphismes des groupes libres aux groupes de tresses). Cela fournit une deuxième démonstration du théorème~\ref{coralg} à partir du théorème~\ref{trw}, mais sans utiliser la proposition~\ref{pr-muc} ni la structure multiplicative sur ${\rm Ext}^*_{\gr\md}(\Lambda^*(\mathfrak{a}_\mathbb{Q}),\Lambda^*(\mathfrak{a}_\mathbb{Q}))$. De fait, le théorème~\ref{trw} implique que les classes algébriquement indépendantes $\bar{h}_p$ engendrent l'algèbre $H^*_{st}(\Lambda^\bullet_\mathbb{Q}\mathfrak{a})$ par un simple comptage de dimension en chaque bidegré.
\end{rem}

\begin{pr} Pour tout entier $n>0$, il existe $\lambda_n\in\mathbb{Q}^\times$ tel que $\bar{h}_n=\lambda_n.\xi_n$.
\end{pr}

\begin{proof} Pour $n=1$, cela résulte de ce que $\bar{h}_1$ et $\lambda_1$ sont des classes non nulles dans $H^1_{st}(\mathfrak{a}_\mathbb{Q})\simeq\mathbb{Q}$. Le cas général s'en déduit par un argument de produit comme suit. On dispose d'une application linéaire graduée
 \begin{equation}\label{eqa7}{\rm Ext}^*(\mathfrak{a}_\mathbb{Q},\mathfrak{a}_\mathbb{Q}^{\otimes 2})^{\otimes (n-1)}\to {\rm Ext}^*(\mathfrak{a}_\mathbb{Q},\mathfrak{a}_\mathbb{Q}^{\otimes n})
 \end{equation}
 composée de l'application
\[{\rm Ext}^*(\mathfrak{a}_\mathbb{Q},\mathfrak{a}_\mathbb{Q}^{\otimes 2})^{\otimes (n-1)}\to {\rm Ext}^*(\mathfrak{a}_\mathbb{Q}^{\otimes n-1},\mathfrak{a}_\mathbb{Q}^{\otimes n})\otimes {\rm Ext}^*(\mathfrak{a}_\mathbb{Q}^{\otimes n-2},\mathfrak{a}_\mathbb{Q}^{\otimes n-1})\otimes\dots\otimes {\rm Ext}^*(\mathfrak{a}_\mathbb{Q},\mathfrak{a}_\mathbb{Q}^{\otimes 2})\]
obtenue par produit tensoriel pour $i=n-1,n-2,\dots,1$ des applications
${\rm Ext}^*(\mathfrak{a}_\mathbb{Q},\mathfrak{a}_\mathbb{Q}^{\otimes 2})\to {\rm Ext}^*(\mathfrak{a}_\mathbb{Q}^{\otimes i},\mathfrak{a}_\mathbb{Q}^{\otimes i+1})$ induites par le foncteur exact $-\otimes\mathfrak{a}_\mathbb{Q}^{\otimes i-1}$ et du produit de composition. Notant $a$ un générateur du $\mathbb{Q}$-espace vectoriel ${\rm Ext}^1(\mathfrak{a}_\mathbb{Q},\mathfrak{a}_\mathbb{Q}^{\otimes 2})$ (qui est de dimension $1$), l'application précédente envoie $a^{\otimes (n-1)}$ sur un générateur de l'espace vectoriel ${\rm Ext}^{n-1}(\mathfrak{a}_\mathbb{Q},\mathfrak{a}_\mathbb{Q}^{\otimes n})$ (qui est aussi de dimension $1$). Cela résulte de \cite[proposition~3.1]{ves-cal}. La classe $\xi_n$ s'obtient donc (à un scalaire non nul près) par application du morphisme canonique ${\rm Ext}^*(\mathfrak{a}_\mathbb{Q},\mathfrak{a}_\mathbb{Q}^{\otimes n})\twoheadrightarrow{\rm Ext}^*(\mathfrak{a}_\mathbb{Q},\Lambda^n(\mathfrak{a}_\mathbb{Q}))\to H^*_{st}(\Lambda^n(\mathfrak{a}_\mathbb{Q}))$ décrit par le corollaire~\ref{corec} à  $a^{\otimes (n-1)}$, c'est-à-dire en évaluant cette classe sur un groupe libre de grand assez grand et en prenant le produit de composition par une classe non nulle de $H^1_{st}(\mathfrak{a}_\mathbb{Q})$. 

La conclusion résulte donc de résultats de Kawazumi, à savoir \cite[{\em Theorem}~4.1]{K-Magnus} et \cite[§\,2]{K-Magnus}, dont la description explicite de la classe de cohomologie qui y est notée $[\tau_1^\theta]$ montre qu'elle s'obtient (à un scalaire non nul près) par évaluation d'une classe non nulle de ${\rm Ext}^1(\mathfrak{a}_\mathbb{Q},\mathfrak{a}_\mathbb{Q}^{\otimes 2})\simeq\mathbb{Q}$.
\end{proof}

\paragraph*{Puissances tensorielles de l'abélianisation}

Notre deuxième application généralise sur les entiers un calcul cohomologique rationnel de Randal-Williams \cite[{\em Theorem}~A]{RW2}. On suppose donc $\kk=\mathbb{Z}$ dans ce paragraphe.

Commençons par une conséquence générale facile des résultats de la section~\ref{sect-mult}. Si $X$ est un foncteur de $\gr\md$, alors $H^*_{st}(X)$ est naturellement un module gradué sur l'anneau gradué $H^*_{st}(\mathbb{Z})\simeq H^*(\mathfrak{S}_\infty;\mathbb{Z})$. La deuxième assertion du théorème~\ref{th-coh} se simplifie sous une hypothèse de projectivité adéquate :

\begin{cor}\label{cor-cproj} Soit $X$ un foncteur polynomial de $\gr\md$ tel que, pour tout $n\in\mathbb{N}$, le $\mathbb{Z}[\mathfrak{S}_n]$-module ${\rm Ext}^*_{\gr\md}(\mathfrak{a}^{\otimes n},X)$ soit projectif. Alors le $H^*_{st}(\mathbb{Z})$-module gradué $H^*_{st}(X)$ est libre et l'on dispose d'un isomorphisme 
\[H^*_{st}(X)\simeq H^*_{st}(\mathbb{Z})\otimes\bigoplus_{n\in\mathbb{N}}\Sigma^n\big( {\rm Ext}^*_{\gr\md}(\mathfrak{a}^{\otimes n},X)^{\mathfrak{S}_n}_\epsilon\big)\]
de $H^*_{st}(\mathbb{Z})$-modules gradués.
\end{cor}

Dans \cite{ves-cal}, Vespa établit ({\em Theorem}~2.3) un isomorphisme 
\[{\rm Ext}^{d-n}_{\gr\md}(\mathfrak{a}^{\otimes n},\mathfrak{a}^{\otimes d})\simeq\mathbb{Z}[\Omega(\mathbf{d},\mathbf{n})]\]
et montre que ${\rm Ext}^i_{\gr\md}(\mathfrak{a}^{\otimes n},\mathfrak{a}^{\otimes d})$ est nul en degré $i\neq d-n$ ; elle identifie de plus ({\em Proposition}~2.5) l'action induite par le groupe symétrique $\mathfrak{S}_n$ sur le membre de gauche comme l'action tautologique sur le membre de droite (induite par la post-composition sur l'ensemble $\Omega(\mathbf{d},\mathbf{n})$), {\em à un signe} (explicite) {\em près}. Comme $\Omega(\mathbf{d},\mathbf{n})$ est un $\mathfrak{S}_n$-ensemble {\em libre}, il s'ensuit que ${\rm Ext}^{d-n}_{\gr\md}(\mathfrak{a}^{\otimes n},\mathfrak{a}^{\otimes d})$ est un $\mathbb{Z}[\mathfrak{S}_n]$-module  libre, de sorte que le corollaire précédent fournit le théorème~\ref{cri-tens} de l'introduction, sous la forme plus précise suivante :

\begin{thm}\label{cr-tens}
 Soit $d\in\mathbb{N}$. Le $H^*_{st}(\mathbb{Z})$-module gradué $H^*_{st}(\mathfrak{a}^{\otimes d})$ est libre
sur des générateurs de degré $d$ en nombre égal à celui des partitions d'un ensemble à $d$ éléments.
\end{thm}

Là encore, on peut préciser la structure multiplicative : en utilisant \cite[{\em Proposition}~3.1\,(2)]{ves-cal}, on obtient l'énoncé suivant, où l'on note $\mathrm{Part}(E)$ l'ensemble des partitions d'un ensemble $E$ :

\begin{thm}\label{multens} Soient $i$ et $j$ des entiers naturels. Le diagramme suivant commute
\[\xymatrix{H^*_{st}(\mathfrak{a}^{\otimes i})\otimes H^*_{st}(\mathfrak{a}^{\otimes j})\ar[r]^-\simeq\ar[d] & \ar[d] \big(H^*_{st}(\mathbb{Z})\otimes H^*_{st}(\mathbb{Z})\big)\otimes\big(\mathbb{Z}[\mathrm{Part}(\mathbf{i})]\otimes\mathbb{Z}[\mathrm{Part}(\mathbf{j})]\big)\\
H^*_{st}(\mathfrak{a}^{\otimes (i+j)})\ar[r]^-\simeq & H^*_{st}(\mathbb{Z})\otimes\mathbb{Z}[\mathrm{Part}(\mathbf{i+j})]
}\]
où les flèches horizontales sont les isomorphismes donnés par le théorème~\ref{cr-tens}, la flèche verticale de gauche est le produit externe et la flèche verticale de droite est induite par le produit de $H^*_{st}(\mathbb{Z})$ et la fonction $\mathrm{Part}(\mathbf{i})\times\mathrm{Part}(\mathbf{j})\to\mathrm{Part}(\mathbf{i+j})$ induite par la réunion disjointe.
\end{thm}

\paragraph*{Un autre calcul sur $\mathbb{Z}$} Nous terminons cette partie en montrant comment déterminer, à partir de la cohomologie de groupes symétriques, la cohomologie stable des groupes d'automorphismes des groupes libres à coefficients dans certains foncteurs qui apparaissent naturellement dans l'étude de la filtration polynomiale de $\gr\md$.

Soit $d\in\mathbb{N}$. Notons, comme dans \cite[page~217]{DPV}, $\beta_d$ le foncteur des modules sur $\mathbb{Z}[\mathfrak{S}_d]$ vers la sous-catégorie pleine de $\gr\md$ des foncteurs polynomiaux de degré au plus $d$ qui est adjoint à droite au foncteur $\mathrm{cr}_d$ associant à un foncteur $F$ l'effet croisé (au sens d'Eilenberg-Mac Lane \cite{EML}) $cr_d(F)(\mathbb{Z},\dots,\mathbb{Z})$. (Le lecteur prendra garde à ne pas confondre ce foncteur $\beta_d$, qui ne sera plus utilisé dans le reste de cet article, avec le foncteur $\beta$ introduit dans la section~\ref{sstr}.)

\begin{lm} Soient $i$, $d$ et $n$ des entiers naturels et $M$ un $\mathbb{Z}[\mathfrak{S}_d]$-module. Alors le groupe d'extensions ${\rm Ext}^i_{\gr\md}(\mathfrak{a}^{\otimes n},\beta_d(M))$ est nul sauf si $i=0$ et $d=n$, auquel cas il est naturellement isomorphe à $M$, de manière $\mathfrak{S}_d$-équivariante, où l'action sur ${\rm Hom}(\mathfrak{a}^{\otimes d},\beta_d(M))$ est celle qu'induit la permutation des facteurs du produit tensoriel $\mathfrak{a}^{\otimes d}$.
\end{lm}

\begin{proof}
L'annulation pour $n>d$ résulte de \cite[corollaire~3.3 et proposition~3.5]{DPV}. Pour $n\leq d$, par \cite[proposition~4.4]{DPV}, on dispose d'un isomorphisme naturel
\[{\rm Ext}^i_{\gr\md}(\mathfrak{a}^{\otimes n},\beta_d(M))\simeq{\rm Ext}^i_{\mathbb{Z}[\mathfrak{S}_d]}(\mathrm{cr_d}(\mathfrak{a}^{\otimes n}),M).\]
Si $n<d$, $\mathrm{cr_d}(\mathfrak{a}^{\otimes n})$ est nul puisque $\mathfrak{a}^{\otimes n}$ est alors de degré strictement inférieur à $d$. Si $n=d$, alors $\mathrm{cr_d}(\mathfrak{a}^{\otimes d})\simeq\mathbb{Z}[\mathfrak{S}_d]$ de manière $\mathfrak{S}_d$-équivariante, d'où le lemme.
\end{proof}

Ce lemme et le théorème~\ref{th-coh} entraînent aussitôt :

\begin{thm}\label{calcompl} Soient $d$ et $n$ des entiers naturels et $M$ un $\mathbb{Z}[\mathfrak{S}_d]$-module. Il existe un isomorphisme naturel en $M$
\[H_{st}^n(\beta_d(M))\simeq\underset{i+j=n-d}{\bigoplus}H^i\big(\mathfrak{S}_\infty;H^j(\mathfrak{S}_d;M_\epsilon)\big)\]
où $\mathfrak{S}_\infty$ opère trivialement et l'indice $\epsilon$ indique que l'action initiale de $\mathfrak{S}_d$ sur $M$ est tordue par la signature.
\end{thm}

\part{Homologie des foncteurs polynomiaux}\label{pg2}

\section{Un critère d'annulation à la Scorichenko}\label{secsco}

\begin{conv} Dans toute cette section, $\T$ désigne une petite catégorie possédant un objet nul et des coproduits finis, qu'on note $*$.
\end{conv}

L'outil crucial pour démontrer le théorème de comparaison homologique~\ref{th-scoc} est un critère d'annulation à la Scorichenko \cite{Sco}, analogue à celui utilisé dans \cite{Dja-JKT} pour établir un résultat de comparaison homologique similaire dans un contexte abélien. Pour cela, donnons quelques notations. Si $E$ est un ensemble fini et $I$ une partie de $E$, notons $t_E$ l'endofoncteur de $\T\md$ de précomposition par $a\mapsto a^{*E}$ et $u^I_E : t_E\to {\rm Id}$ la transformation naturelle donnée par la précomposition par le morphisme naturel $a^{*E}\to a$ dont la composante $a\to a$ correspondant au facteur étiqueté par $e\in E$ est l'identité si $e\in I$ et $0$ sinon. Si $(E,e)$ est un ensemble fini pointé, on définit l'effet croisé $cr_{(E,e)}$ comme la transformation naturelle
$$cr_{(E,e)}:=\sum_{e\in I\subset E}(-1)^{{\rm Card}(I)-1}u^I_E : t_E\to {\rm Id}.$$
Un foncteur $F$ de $\T\md$ est polynomial de degré au plus $d$ si et seulement si $cr_{(E,e)}(F)=0$, où $E$ est un ensemble de cardinal $d+2$.

\begin{defi}\begin{enumerate}
\item On note $\mathbf{E}_c(\T)$ la sous-catégorie de $\T$ image essentielle du foncteur canonique $\sct^{op}\to\T$. Autrement dit, $\mathbf{E}_c(\T)$ a les mêmes objets que $\T$ et un morphisme $v : H\to G$ de $\T$ appartient à $\mathbf{E}_c(\T)$ si et seulement s'il 
existe un objet $T$ de $\T$ et un isomorphisme $H\simeq G*T$ faisant commuter le diagramme
 $$\xymatrix{H\ar[r]^v\ar[d]_\simeq & G\\
 G*T\ar[ru] &
 }$$
 dont la flèche oblique est l'épimorphisme canonique déduit de ce que l'unité de la structure monoïdale $*$ en est un objet final.
\item Si $F$ et $G$ sont des foncteurs définis sur $\T$, on dira qu'une collection de morphismes $F(a)\to G(a)$ définis pour $a\in {\rm Ob}\,\T$ est {\em faiblement naturelle} si elle est naturelle relativement aux morphismes de $\mathbf{E}_c(\T)$.
\item Nous dirons que $\T$ vérifie l'hypothèse {\rm (H)} si, pour tout objet $a$ de $\T$, le morphisme de somme $a*a\to a$ appartient à $\mathbf{E}_c(\T)$.
\end{enumerate}
\end{defi}

Ainsi, $\mathbf{E}_c(\T)$ est constituée d'épimorphismes scindés de $\T$. L'indice $c$ de la notation sert à rappeler qu'on ne considère que les épimorphismes scindés possédant un {\em complément} pour le coproduit. Si $\T$ est additive et que ses idempotents se scindent, alors tout épimorphisme scindé de $\T$ appartient à $\mathbf{E}_c(\T)$. Mais dans la catégorie $\gr$, tout épimorphisme scindé n'appartient pas à $\mathbf{E}_c(\gr)$.

On rappelle qu'une structure de {\em cogroupe} sur un objet $a$ de $\T$ est la donnée d'un morphisme $c : a\to a*a$ coassociatif, dont la composée avec chacune des deux projections canoniques $a*a\to a$ est l'identité, pour lequel il existe une <<~coïnversion~>> $a\to a$. Il revient au même de se donner une structure de groupe sur chaque ensemble $\T(a,x)$ {\em naturellement en $x\in {\rm Ob}\,\T$}. Le lemme élémentaire suivant montre comment construire des <<~automorphismes triangulaires~>> à l'aide d'une structure de cogroupe.

\begin{lm} Si $c\in\T(a,a*a)$ est une structure de cogroupe sur un objet $a$ de $\T$, alors, pour tout objet $x$ de $\T$, la fonction $\varphi : \T(a,x)\to\T(a*x,a*x)$
\[f\mapsto\big(a*x\xrightarrow{c*x}a*a*x\xrightarrow{a*f*x}a*x*x\xrightarrow{a*\sigma}a*x\big),\]
où $\sigma : x*x\to x$ désigne la somme,
induit un morphisme de groupes $\T(a,x)\to {\rm Aut}_\T(a*x)$.
\end{lm}

\begin{proof} Il suffit de vérifier que $\varphi(f\square g)=\varphi(f)\circ\varphi(g)$, où $\square$ est le produit sur $\T(a,x)$ induit par la structure de cogroupe sur $a$ --- autrement dit,
\[f\square g=\big(a\xrightarrow{c}a*a\xrightarrow{f*g}x*x\xrightarrow{\sigma}x\big).\]

Cela provient de ce que le diagramme
\[\xymatrix{a*x\ar[r]^-{c*x}\ar[d]_-{c*x} & a*a*x\ar[r]^-{a*g*x}\ar[d]_-{c*a*x} & a*x*x\ar[r]^-{a*\sigma}\ar[d]_-{c*x*x} & a*x\ar[d]^-{c*x}\\
a*a*x\ar[r]^-{a*c*x}\ar[rrdd]_-{a*(f\square g)*x} & a*a*a*x\ar[r]^-{a*a*g*x}\ar[rd]_-{a*f*g*x} & a*a*x*x\ar[r]^-{a*a*\sigma}\ar[d]^-{a*f*x*x} & a*a*x\ar[d]^-{a*f*x}\\
&& a*x*x*x\ar[r]^-{a*x*\sigma}\ar[d]^-{a*\sigma*x} & a*x*x\ar[d]^-{a*\sigma}\\
&& a*x*x\ar[r]_-{a*\sigma} & a*x
}\]
commute (le rectangle en haut à gauche commute par coassociativité de $c$, celui en bas à droite par associativité de $\sigma$, et les autres parties commutent par naturalité de $*$ ou par définition de $f\square g$).
\end{proof}

\begin{pr}\label{prh} Si tout objet de $\T$ possède une structure de cogroupe, alors $\T$ vérifie l'hypothèse {\rm (H)}.
\end{pr}

\begin{proof} Si l'on se donne une structure de cogroupe sur un objet $a$ de $\T$, l'image $f$ de ${\rm Id}_a$ par le morphisme de groupes $\T(a,a)\to {\rm Aut}_\T(a*a)$ fourni par le lemme précédent fait commuter le diagramme
\[\xymatrix{a*a\ar[d]_-f^-\simeq\ar[r]^-\sigma & a\\
a*a\ar[ru]_{p_2} &
},\]
où $\sigma$ est la somme et $p_2$ la deuxième projection, ce qui montre que $\sigma$ appartient à $\mathbf{E}_c(\T)$, comme souhaité.
\end{proof}

Comme tout groupe {\em libre} possède une structure de cogroupe, on en déduit :

\begin{cor}\label{corgh} La catégorie $\gr$ des groupes libres de rang fini vérifie l'hypothèse {\rm (H)}.
\end{cor}

\begin{rem} En revanche, la petite catégorie avec objet nul et coproduits finis $\Gamma$ des ensembles finis avec applications partiellement définies ne vérifie pas l'hypothèse {\rm (H)}. En effet, les morphismes de $\mathbf{E}_c(\Gamma)$ sont ceux isomorphes à des flèches du type $E\sqcup E'\to E$ (égale à l'identité sur $E$ et non définie sur $E'$), ce qui n'est pas le cas de la codiagonale $E\sqcup E\to E$ (qui est partout définie) dès lors que l'ensemble fini $E$ est non vide.
\end{rem}

\begin{pr}\label{lm-sco}
On suppose que $\T$ vérifie l'hypothèse {\rm (H)}.

 Soient $T$ un foncteur de $\T\md$, $d\in\mathbb{N}$ et $(E,e)$ un ensemble pointé de cardinal $d+2$. Supposons que la transformation naturelle $cr_{(E,e)}(T) : t_E(T)\to T$ est un épimorphisme possédant un scindement faiblement naturel. Alors ${\rm Tor}^\T_*(F,T)=0$ pour tout foncteur $F$ de $\mdd\T$ polynomial de degré au plus $d$.
\end{pr}

\begin{proof}
Le foncteur de coproduit itéré $*^E : \T^E\to\T$ est adjoint à gauche au foncteur de diagonale itérée $\delta_E : \T\to\T^E$. Par conséquent, on dispose d'un isomorphisme naturel de $\kk$-modules gradués
\[{\rm Tor}^{\T}_*(F,X\circ\delta_E)\simeq {\rm Tor}^{\T^E}_*(F\circ *^E,X)\]
pour tous objets $F$ et $X$ de $\mdd\T$ et $\T^E\md$ respectivement. En particulier, comme $t_E$ est la précomposition par l'endofoncteur $*^E\circ\delta_E$ de $\T$, on a un isomorphisme naturel
\[{\rm Tor}^{\T}_*(F,t_E(Y))\simeq {\rm Tor}^{\T^E}_*(F\circ *^E,Y\circ *^E)\]
pour $F$ dans $\mdd\T$ et $Y$ dans $\T\md$. Il s'ensuit que, si $F$ est polynomial de degré au plus $d$, le morphisme naturel
${\rm Tor}^{\T}_*(F,t_E(Y))\to {\rm Tor}^{\T}_*(F,Y)$ induit par $cr_{(E,e)}(Y)$ est {\em nul} pour tout $Y$ dans $\T\md$. En effet, via l'isomorphisme précédent, le morphisme induit par $cr_{(E,e)}(Y)$ est la composée de l'endomorphisme de ${\rm Tor}^{\T^E}_*(F\circ *^E,Y\circ *^E)$ induit par l'endomorphisme $\sum_{e\in I\subset E}(-1)^{{\rm Card}(I)-1}\eta^I_E$ de $F\circ *^E$ (où $\eta^I_E$ est la précomposition par la transformation naturelle idempotente $*^E\to *^E$ dont la composante est l'identité sur les facteurs dans $I$ et $0$ sur les autres facteurs), endomorphisme qui est nul si $F$ est polynomial de degré au plus $d$, et du morphisme induit par le foncteur $*^E : \T^E\to\T$.

Notons $U$ le noyau du morphisme $cr_{(E,e)}(T)$, qui est un épimorphisme par hypothèse. Ce qui précède montre que la suite exacte longue pour ${\rm Tor}^\T_*(F,-)$ associée à la suite exacte courte $0\to U\to t_E(T)\xrightarrow{cr_{(E,e)}(T)} T\to 0$ se réduit à des suites exactes courtes
$$0\to {\rm Tor}^\T_i(F,T)\to {\rm Tor}^\T_{i-1}(F,U)\to {\rm Tor}^\T_{i-1}(F,t_E(T))\to 0,$$
ce qui établit déjà la nullité de ${\rm Tor}^\T_0(F,T)$. Pour démontrer le cas général par récurrence sur le degré homologique, il suffit de prouver que l'effet croisé $cr_{(E,e)}(U)$ est un épimorphisme faiblement scindé, ce qu'établit le lemme~\ref{aux-sco} ci-après.
\end{proof}

Le lemme suivant constitue une généralisation directe dans le cadre non additif qui est le nôtre de \cite[§\,2.1]{Dja-JKT}, qui reprend les résultats non publiés de Scorichenko \cite{Sco}.

\begin{lm}\label{aux-sco} Sous les hypothèses de la proposition précédente :
\begin{enumerate}
\item l'automorphisme $\xi$ du foncteur $t_E\circ t_E\simeq t_{E\times E}$ donné par l'échange des deux facteurs du produit cartésien $E\times E$ fait commuter le diagramme
 \[\xymatrix{t_E\circ t_E\ar[rr]^-{cr_{(E,e)}t_E}\ar[d]_\xi & & t_E \\
 t_E\circ t_E\ar[rru]_-{t_E cr_{(E,e)}} & & 
 }\;;\]
\item l'endofoncteur exact $t_E$ de $\T\md$ induit un endofoncteur exact de $\mathbf{E}_c(\T)\md$ : il existe un endofoncteur exact de $\tilde{t}_E$ de $\mathbf{E}_c(\T)\md$ faisant commuter le diagramme
\[\xymatrix{\T\md\ar[r]\ar[d]_{t_E} & \mathbf{E}_c(\T)\md\ar[d]^{\tilde{t}_E}\\
\T\md\ar[r] & \mathbf{E}_c(\T)\md
}\]
dans lequel les flèches horizontales sont la précomposition par l'inclusion $\mathbf{E}_c(\T)\to\T$. De plus, la transformation naturelle $cr_{(E,e)} : t_E\to\mathrm{Id}$ induit une transformation naturelle $\tilde{t}_E\to\mathrm{Id}$ (encore notée $cr_{(E,e)}$) ;
\item le noyau $U$ de l'épimorphisme $cr_{(E,e)}(T)$ est tel que $cr_{(E,e)}(U)$ est un épimorphisme possédant un scindement faiblement naturel.
\end{enumerate}
\end{lm}

\begin{proof} La première assertion est une conséquence formelle directe de ce que le coproduit $*$ définit une structure monoïdale symétrique sur $\T$.

Le fait que l'endofoncteur $a\mapsto a^{*E}$ de $\T$ induit un endofoncteur de $\mathbf{E}_c(\T)$ découle de ce que cette sous-catégorie de $\T$ est monoïdale, ce qui par précomposition fournit l'endofoncteur exact $\tilde{t}_E$ de $\mathbf{E}_c(\T)\md$. Le fait que les transformations naturelles $u^I_E : t_E\to {\rm Id}$ (pour $I$ partie non vide de $E$) induisent des transformations naturelles $\tilde{t}_E\to {\rm Id}$ provient de l'hypothèse (H), qui implique (en raisonnant par récurrence sur le cardinal de $I$) que les morphismes naturels $a^{*E}\to a$ de $\T$ dont les composantes $a\to a$ sont l'identité (pour au moins l'une d'entre elle) ou $0$ appartiennent à $\mathbf{E}_c(\T)$.

Pour établir le dernier point, considérons une section faiblement naturelle de $t_E(T)$ : c'est un morphisme $s : \tilde{T}\to\tilde{t}_E(\tilde{T})$ de $\mathbf{E}_c(\T)\md$ qui est une section de $cr_{(E,e)}(\tilde{T})$, où $\tilde{T}$ désigne la restriction de $T$ à $\mathbf{E}_c(\T)$. Le morphisme $s' : \tilde{t}_E(\tilde{T})\to\tilde{t}_E\big(\tilde{t}_E(\tilde{T})\big)=(\tilde{t}_E\circ\tilde{t}_E)(\tilde{T})$ composé de $\tilde{t}_E(s)$ et de l'automorphisme involutif $\tilde{\xi}$ induit par $\xi$ ($\mathbf{E}_c(\T)$ et $\T$ ont les mêmes automorphismes) constitue une section de $cr_{(E,e)}\big(\tilde{t}_E(\tilde{T})\big)$, en vertu du diagramme commutatif
\[\xymatrix{\tilde{t}_E(\tilde{T})\ar[rr]^-{\tilde{t}_E(s)}\ar[rrd]_-{s'} && \tilde{t}_E\big(\tilde{t}_E(\tilde{T})\big)\ar[rr]^-{\tilde{t}_E(cr_{(E,e)}(\tilde{T}))}\ar[d]_-{\tilde{\xi}} && \tilde{t}_E(\tilde{T})\\
&& \tilde{t}_E\big(\tilde{t}_E(\tilde{T})\big)\ar[rru]_-{cr_{(E,e)}\big(\tilde{t}_E(\tilde{T})\big)} &&
}\]
dont la partie droite est induite par le premier point du lemme.

De surcroît, le diagramme
\[\xymatrix{\tilde{t}_E(\tilde{T})\ar[d]_-{s'}\ar[rr]^-{cr_{(E,e)}(\tilde{T})} && \tilde{T}\ar[d]^-s\\
\tilde{t}_E\big(\tilde{t}_E(\tilde{T})\big)\ar[rr]_-{\tilde{t}_E(cr_{(E,e)}(\tilde{T}))} && \tilde{t}_E(\tilde{T})
}\]
commute, parce que c'est le cas de
\[\xymatrix{\tilde{t}_E(\tilde{T})\ar[rr]^-{cr_{(E,e)}(\tilde{T})}\ar[d]_-{\tilde{t}_E(s)} && \tilde{T}\ar[d]^-s\\
\tilde{t}_E\big(\tilde{t}_E(\tilde{T})\big)\ar[rr]^-{cr_{(E,e)}\big(\tilde{t}_E(\tilde{T})\big)}\ar[d]_-{\tilde{\xi}} && \tilde{t}_E(\tilde{T}) \\
\tilde{t}_E\big(\tilde{t}_E(\tilde{T})\big)\ar[rru]_-{\tilde{t}_E(cr_{(E,e)}(\tilde{T}))} &&
}\]
(la partie supérieure commute par naturalité de $cr_{(E,e)}$, la partie inférieure par la première partie du lemme). Cela entraîne que $s'$ induit une section $\tilde{U}\to\tilde{t}_E(\tilde{U})$ du morphisme $cr_{(E,e)}(\tilde{U})$ (où $\tilde{U}$ désigne la restriction de $U$ à $\mathbf{E}_c(\T)$, qui n'est autre que le noyau de $cr_{(E,e)}(\tilde{T})$) et achève la démonstration.
\end{proof}

\section{Comparaison homologique des catégories $\scg$ et $\gr^{op}$}\label{ssco}

Nous sommes maintenant en mesure d'établir le résultat suivant, dont le théorème~\ref{th-scoc} est un cas particulier, grâce au corollaire~\ref{corgh}. On y fait usage des catégories $\mathbf{S}(\T)$ et $\sct$ introduites dans la notation~\ref{not-sv} (lorsque $\T$ est une catégorie monoïdale symétrique dont l'unité est objet nul) ; la structure monoïdale sur $\T$ induit une structure monoïdale symétrique sur $\mathbf{S}(\T)$ et $\sct$ et les foncteurs canoniques reliant ces trois catégories (qui sont l'identité sur les objets) sont monoïdaux (au sens fort).

 \begin{thm}\label{thsco}
 Soit $\C$ une sous-catégorie monoïdale de $\mathbf{S}(\T)$ contenant $\mathbf{S}_c(\T)$, où $\T$ est une petite catégorie possédant un objet nul et des coproduits finis et vérifiant l'hypothèse {\rm (H)}. Notons $\beta$ la restriction à $\C$ du foncteur canonique $\mathbf{S}(\T)\to\T^{op}$. Si $F$ est un foncteur analytique de  $\mdd\T$ et $G$ un foncteur arbitraire de $\T\md$, alors le morphisme naturel
 \[{\rm Tor}_*^\C(\beta^*G,\beta^*F)\to {\rm Tor}_*^\T(F,G)\]
qu'induit le foncteur $\beta$ est un isomorphisme.
\end{thm}

\begin{proof} Posons $Y_i=\mathbf{L}_i(\beta_!)(\beta^*G)$ pour $i>0$ et notons $Y_0$ le noyau de la coünité $\beta_!\beta^*G\to G$. Comme $\beta$ est essentiellement surjectif, $\beta^*$ est fidèle, ce qui implique que cette coünité est surjective. Grâce à la proposition~\ref{pr-ssga}, il suffit de démontrer l'annulation ${\rm Tor}^\T_*(F,Y_i)=0$ pour tout $i\in\mathbb{N}$ et tout $F$ polynomial dans $\mdd\T$. Cette annulation se propage en effet au cas analytique par passage à la colimite. On va l'établir en utilisant la proposition~\ref{lm-sco} et le lemme ci-dessous.

Soit $(E,e)$ un ensemble fini pointé. Le morphisme
\[\xi_{a,a^{*E\setminus e}} : Y_i(a)\to Y_i(a*a^{*E\setminus e})=t_E(Y_i)(a)\]
fourni par le lemme~\ref{asco} est une section de l'effet croisé $cr_{(E,e)}(Y_i)(a)$ grâce à la première propriété, qui montre que la composée $u^I_E(Y_i)(a)\circ \xi_{a,a^{*E\setminus e}}$ est égale à l'identité pour $I=\{e\}$ et à $0$ lorsque $I\subset E$ contient strictement $\{e\}$, et à l'hypothèse (H), qui montre que la deuxième composante du morphisme $a^{*E}\simeq a*a^{*(E\setminus\{e\})}\to a$ qui induit $u^I_E$ appartient alors à $\mathbf{E}_c(\T)$. La seconde propriété du lemme montre que cette section possède la naturalité faible qui permet d'appliquer la proposition~\ref{lm-sco}, ce qui achève la démonstration.
\end{proof}

\begin{lm}\label{asco} Pour tout entier naturel $i$ et tous objets $a$ et $b$ de $\T$, il existe une application linéaire $\xi_{a,b} : Y_i(a)\to Y_i(a*b)$ de sorte que les propriétés suivantes soient vérifiées.
\begin{enumerate}
\item Soit $f : b\to a$ un morphisme de $\T$ et $\phi : a*b\to a$ le morphisme de composantes ${\rm Id}_a$ et $f$. Alors la composée
 \[c(\phi) : Y_i(a)\xrightarrow{\xi_{a,b}}Y_i(a*b)\xrightarrow{Y_i(\phi)}Y_i(a)\]
 est égale à l'identité si $f=0$ et à $0$ si $f$ appartient à la catégorie $\mathbf{E}_c(\T)$.
\item Le morphisme $\xi_{a,b}$ est naturel en $a\in {\rm Ob}\,\T$ et en $b\in {\rm Ob}\,\mathbf{E}_c(\T)$.
\end{enumerate}
\end{lm}

\begin{proof} La construction repose sur le caractère (fortement) monoïdal du foncteur $\beta$ (cf. \cite[§\,2.3]{Dja-JKT}). Celui-ci permet de considérer le foncteur $\tau_b$ de translation par $b$ (i.e. la précomposition par $-*b$, qu'on peut considérer à la fois dans $\C$ et $\T$), qui constitue un endofoncteur de $\C\md$, $\T\md$, $\mdd\C$ ou $\mdd\T$, et ce naturellement en $b$, et qui commute à isomorphisme naturel près à $\beta^*$, en raison du caractère monoïdal de $\beta$.

On définit, pour $i>0$,
\[\xi_{a,b} : Y_i(a)={\rm Tor}^\C_i(\beta^*G,\beta^* P^{\T^{op}}_a)\to Y_i(a*b)={\rm Tor}^\C_i(\beta^*G,\beta^* P^{\T^{op}}_{a*b})\]
(on fait usage de l'isomorphisme naturel \eqref{dgek} de la proposition~\ref{pr-ssga}) comme la composée suivante :
 \[{\rm Tor}^\C_i(\beta^*G,\beta^* P^{\T^{op}}_a)\to{\rm Tor}^\C_i(\tau_b\beta^*G,\tau_b\beta^* P^{\T^{op}}_{a*b})\to{\rm Tor}^\C_i(\beta^*G,\beta^* P^{\T^{op}}_{a*b})\]
 où la première flèche est induite par les morphismes obtenus en appliquant $\beta^*$ aux morphismes $u : G\to\tau_b G$ provenant du morphisme $b\to 0$ de $\T$ et $v : P^{\T^{op}}_a\to\tau_b P^{\T^{op}}_{a*b}$ induit par l'endofoncteur $-*b$ de $\T$, et la deuxième flèche est la flèche induite en homologie par le foncteur $-*b : \C\to\C$, comme rappelé en \eqref{eqft}. Pour $i=0$, on utilise la même construction, en constatant que ces flèches passent bien au noyau définissant $Y_0$, en raison du diagramme commutatif
 \[\xymatrix{\beta^*G\underset{\C}{\otimes}\beta^* P^{\T^{op}}_a\ar[r]\ar[d] & \tau_b\beta^*G\underset{\C}{\otimes}\tau_b\beta^* P^{\T^{op}}_{a*b}\ar[r]\ar[d] & \beta^*G\underset{\C}{\otimes}\beta^* P^{\T^{op}}_{a*b}\ar[d]\\
G(a)\simeq P^{\T^{op}}_a\underset{\T}{\otimes}G\ar[r] & \tau_b P^{\T^{op}}_{a*b}\underset{\T}{\otimes}\tau_b G\ar[r] & P^{\T^{op}}_{a*b}\underset{\T}{\otimes}G\simeq G(a*b)
 }\]
 dont les flèches horizontales sont définies comme précédemment et les flèches verticales sont induites par le foncteur $\beta : \C\to\T^{op}$.
 
 Pour démontrer la première propriété, on forme le diagramme commutatif
 $$\xymatrix{{\rm Tor}^\C_i(\beta^*G,\beta^* P^{\T^{op}}_a)\ar[d]_-{u_*}\ar[rd]^-{(u,v)_*} & & \\
 {\rm Tor}^\C_i(\tau_b\beta^*G,\beta^* P^{\T^{op}}_a)\ar[r]^-{v_*} & {\rm Tor}^\C_i(\tau_b\beta^*G,\tau_b\beta^* P^{\T^{op}}_{a*b})\ar[r]^{\phi_*}\ar[d] & {\rm Tor}^\C_i(\tau_b\beta^*G,\tau_b\beta^* P^{\T^{op}}_{a})\ar[d] \\
  & {\rm Tor}^\C_i(\beta^*G,\beta^* P^{\T^{op}}_{a*b})\ar[r]^{\phi_*} & {\rm Tor}^\C_i(\beta^*G,\beta^* P^{\T^{op}}_{a})
 }$$
 où les flèches verticales inférieures sont induites par le foncteur $-*b : \C\to\C$. La ligne du milieu est induite par la composée $\beta^* P^{\T^{op}}_a\xrightarrow{v_*}\tau_b\beta^* P^{\T^{op}}_{a*b}\xrightarrow{\phi_*}\tau_b\beta^* P^{\T^{op}}_{a}$.
 
 Pour $f=0$, celle-ci est induite par la transformation naturelle ${\rm Id}=\tau_0\to\tau_b$ (venant de ce que $0$ est objet initial de $\C$). Le diagramme commutatif
 $$\xymatrix{{\rm Tor}^\C_i(\tau_b\beta^*G,\beta^* P^{\T^{op}}_a)\ar[r]\ar[rd] & {\rm Tor}^\C_i(\tau_b\beta^*G,\tau_b\beta^* P^{\T^{op}}_a)\ar[d]\\
 & {\rm Tor}^\C_i(\beta^*G,\beta^* P^{\T^{op}}_a)
 }$$
 dont la flèche horizontale (resp. oblique, verticale) est induite par cette transformation naturelle ${\rm Id}\to\tau_b$ (resp. par la version contravariante de cette transformation naturelle, par le foncteur $-*b : \C\to\C$) et le fait que la composée $\beta^* G\xrightarrow{u_*}\tau_b\beta^*G\to\beta^* G$ égale l'identité montrent que $c(\phi)$ est, dans ce cas, l'identité, comme souhaité.
 
 Pour $f\in\mathbf{E}_c(\T)$, la composée $\beta^* P^{\T^{op}}_a\xrightarrow{v_*}\tau_b\beta^* P^{\T^{op}}_{a*b}\xrightarrow{\phi_*}\tau_b\beta^* P^{\T^{op}}_{a}$ se factorise par $\tau_b P^\C_a$. En effet, par hypothèse, $f$ est du type $b\simeq a*a'\twoheadrightarrow a$, notons $h$ le morphisme canonique $a\hookrightarrow a*a'\simeq b$. Si $g$ est un objet de $\C$, on dispose d'une fonction naturelle en $g$
 $$\T(\beta(g),a)\to\C(a,g*b)\qquad \psi\mapsto (a\xrightarrow{h}b\hookrightarrow g*b,g*b\xrightarrow{(\psi,f)}a)$$
 dont la composée avec la fonction $\C(a,g*b)\to\T(\beta(g)*b,a)$ induite par le foncteur $\beta : \C\to\T^{op}$ induit, après linéarisation, le morphisme $\beta^* P^{\T^{op}}_a\xrightarrow{v_*}\tau_b\beta^* P^{\T^{op}}_{a*b}\xrightarrow{\phi_*}\tau_b\beta^* P^{\T^{op}}_{a}$. La nullité de $c(\phi)$ se déduit de cette factorisation, en l'utilisant pour former un diagramme commutatif
 $$\xymatrix{{\rm Tor}^\C_i(\tau_b\beta^*G,\beta^* P^{\T^{op}}_a)\ar[r] & {\rm Tor}^\C_i(\tau_b\beta^*G,\tau_b P^\C_a)\ar[r]\ar[d] & {\rm Tor}^\C_i(\tau_b\beta^*G,\tau_b\beta^* P^{\T^{op}}_{a})\ar[d] \\
  & {\rm Tor}^\C_i(\beta^*G,P^\C_a)\ar[r] & {\rm Tor}^\C_i(\beta^*G,\beta^* P^{\T^{op}}_{a})
 }$$
 et en utilisant que $P^\C_a$ est projectif.
 
 Montrons maintenant la deuxième assertion. La naturalité en $a$ de $\xi_{a,b}$ découle de ce que $a\mapsto P^{\T^{op}}_a$ et $a\mapsto P^{\T^{op}}_{a*b}$ définissent des foncteurs sur $\T$. Celle en $b$ s'obtient comme suit. Soit $\chi : b\to b'$ un morphisme de $\mathbf{E}_c(\T)$. On dispose d'un diagramme commutatif
 $$\xymatrix{P^{\T^{op}}_a\ar[r]\ar[d] & \tau_b P^{\T^{op}}_{a*b}\ar[d] \\
 \tau_{b'} P^{\T^{op}}_{a*b'}\ar[r] & \tau_b P^{\T^{op}}_{a*b'}
 }$$
 dont les flèches $P^{\T^{op}}_a\to\tau_b P^{\T^{op}}_{a*b}$ et $P^{\T^{op}}_a\to\tau_{b'} P^{\T^{op}}_{a*b'}$ sont induites par les endofoncteurs respectifs $-*b$ et $-*b'$ de $\T$ et les autres flèches sont induites par $\chi : b\to b'$. Comme $\chi$ appartient à $\mathbf{E}_c(\T)$, qui est l'image essentielle du foncteur canonique $\mathbf{S}_c(\T)^{op}\to\T$, ce morphisme est l'image par $\beta$ d'un morphisme $\rho : b'\to b$ de $\C$.
 
 Cela permet de former un diagramme commutatif
 $$\xymatrix{& {\rm Tor}^\C_i(\beta^*G,\beta^* P^{\T^{op}}_a)\ar[ld]\ar[d]\ar[rd] & \\
 {\rm Tor}^\C_i(\tau_b\beta^*G,\beta^* P^{\T^{op}}_a)\ar[r]\ar[d] & {\rm Tor}^\C_i(\tau_b\beta^*G,\tau_{b'}\beta^* P^{\T^{op}}_{a*b'})\ar[r]_-{\rho^*}\ar[d]^-{\rho_*} & {\rm Tor}^\C_i(\tau_{b'}\beta^*G,\tau_{b'}\beta^* P^{\T^{op}}_{a*b'})\ar[d] \\
 {\rm Tor}^\C_i(\tau_b\beta^*G,\tau_b\beta^* P^{\T^{op}}_{a*b})\ar[r]\ar[d] & {\rm Tor}^\C_i(\tau_b\beta^*G,\tau_{b}\beta^* P^{\T^{op}}_{a*b'})\ar[r] & {\rm Tor}^\C_i(\beta^*G,\beta^* P^{\T^{op}}_{a*b'}) \\
{\rm Tor}^\C_i(\beta^*G,\beta^* P^{\T^{op}}_{a*b})\ar[rru]_-{\chi_*} & & 
 }$$
 dans lequel la composée ${\rm Tor}^\C_i(\beta^*G,\beta^* P^{\T^{op}}_a)\to {\rm Tor}^\C_i(\beta^*G,\beta^* P^{\T^{op}}_{a*b})$ est $\xi_{a,b}$ et  ${\rm Tor}^\C_i(\beta^*G,\beta^* P^{\T^{op}}_a)\to {\rm Tor}^\C_i(\tau_b\beta^*G,\tau_{b}\beta^* P^{\T^{op}}_{a*b'})$ est $\xi_{a,b'}$ ; le carré de droite est déduit de la transformation naturelle $-*\rho : -*b'\to -*b$ entre endofoncteurs de $\C$ et celui de gauche du carré commutatif ci-dessus, tandis que le triangle en haut à droite provient de ce que le morphisme canonique $G\to\tau_t G$ est fonctoriel en l'objet $t$ de $\T$. Cela montre la naturalité souhaitée.
\end{proof}

L'énoncé suivant constitue une variante du théorème~\ref{thsco}. Elle s'établit de la même façon, en utilisant la suite spectrale d'hypercohomologie (en Ext) induite par l'adjonction entre $\beta_!$ et $\beta^*$ plutôt que la suite spectrale d'hyperhomologie (en Tor) employée pour le théorème~\ref{thsco}.

\begin{cor}\label{sco-ext}
 Sous les mêmes hypothèses, si $X$ et $Y$ sont des foncteurs de $\T\md$, avec $Y$ polynomial, alors le morphisme naturel
 $${\rm Ext}^*_{\T\md}(X,Y)\to {\rm Ext}_{\mdd\C}^*(\beta^*X,\beta^*Y)$$
induit par $\beta^*$ est un isomorphisme.
\end{cor}

\section{Conjecture pour les coefficients bivariants}\label{s-conj}

Nous allons maintenant énoncer une conjecture de pure homologie des foncteurs qui permettrait de généraliser à la fois les résultats principaux du présent article et ceux de \cite{DV15}.

Si $\C$ est une catégorie, on note $\mathbf{F}(\C)$ la {\em catégorie des factorisations} de Quillen \cite{QK} associée à $\C$ : ses objets sont les flèches de $\C$, et un morphisme dans $\mathbf{F}(\C)$ de $a\xrightarrow{u}b$ vers $c\xrightarrow{v}d$ dans $\mathbf{F}(\C)$ est un couple $(f,g)$ constitué de flèches $f\in\C(a,c)$ et $g\in\C(d,b)$ telles que le diagramme
\[\xymatrix{a\ar[r]^u\ar[d]_f & b \\
c\ar[r]^v & d\ar[u]_g
}\]
de $\C$ commute ; la composition des morphismes dans $\mathbf{F}(\C)$ est induite par celle de $\C$. On dispose d'un foncteur {\em pleinement fidèle} $\mathbf{S}(\C)\to\mathbf{F}(\C)$ associant à un objet de $\C$ l'identité de celui-ci et à une flèche $(f,g)\in\mathbf{S}(\C)(x,y)$ (c'est-à-dire des flèches $f : x\to y$ et $g : y\to x$ de $\C$ telles que $gf={\rm Id}_x$) le morphisme $(f,g)$ de ${\rm Id}_x$ dans ${\rm Id}_y$. On a également des foncteurs canoniques $\mathbf{F}(\C)\to\C$ et $\mathbf{F}(\C)\to\C^{op}$ --- et donc un foncteur canonique $\mathbf{F}(\C)\to\C^{op}\times\C$ --- donnés sur les objets par la source et le but respectivement et associant à un morphisme sa première et deuxième composantes respectivement.

\begin{nota} On désigne par $\nu : \scg\to\mathbf{F}(\gr)$ la restriction à $\scg$ du foncteur canonique  $\mathbf{S}(\gr)\to\mathbf{F}(\gr)$ précédent.
\end{nota}

\begin{conj}\label{conj-biv} Soient $F : \gr^{op}\times\gr\to\mathbf{Ab}$ un foncteur analytique et $G : \gr\to\mathbf{Ab}$ un foncteur arbitraire. Notons $X$ (resp. $Y$) la composée de $F$ (resp. $G$) avec le foncteur canonique $\mathbf{F}(\gr)\to\gr^{op}\times\gr$ (resp. $\mathbf{F}(\gr)^{op}\to\gr$).

Alors l'application
\[{\rm Tor}^\scg_*(\nu^* Y,\nu^* X)\to {\rm Tor}^{\mathbf{F}(\gr)}_*(Y,X)\]
qu'induit le foncteur $\nu$ (cf. \eqref{eqft}) est un isomorphisme.
\end{conj}

Une catégorie du type $\mathbf{F}(\C)$ est beaucoup plus facile à étudier à partir de $\C$ du point de vue homologique que la catégorie $\mathbf{S}(\C)$ ou une variante comme $\scg$, car $\mathbf{F}(\C)$ s'identifie à  la catégorie opposée de la {\em catégorie d'éléments} associée au foncteur ${\rm Hom}_\C : \C^{op}\times\C\to\mathbf{Ens}$. Par exemple, le cas où le bifoncteur $F$ est {\em séparable} (c'est-à-dire produit tensoriel extérieur d'un foncteur de $\mdd\gr$ et d'un foncteur de $\gr\md$) se ramène à des groupes de torsion sur $\gr$ en vertu du lemme suivant.

\begin{lm} Soient $\C$ une petite catégorie, $A$ et $B$ des foncteurs de $\C\md$ et $C$ un foncteur de $\mdd\C$. On suppose que les valeurs de $A$ et de $C$ sont des groupes abéliens sans torsion. Notons $X$ (resp. $Y$) la composée de $C\boxtimes B$ (resp. $A$) avec le foncteur canonique $\mathbf{F}(\C)\to\C^{op}\times\C$ (resp. $\mathbf{F}(\C)^{op}\to\C$).

On dispose d'un isomorphisme naturel
\[{\rm Tor}^{\mathbf{F}(\C)}_*(Y,X)\simeq {\rm Tor}^\C_*(C,B\otimes A).\]
\end{lm}

\begin{proof} L'observation précédente sur la description de $\mathbf{F}(\C)$ comme catégorie d'éléments et l'isomorphisme général d'adjonction~\eqref{east} fournissent un isomorphisme naturel
\begin{equation}\label{eqax}
{\rm Tor}^{\mathbf{F}(\C)}_*(Y,X)\simeq {\rm Tor}^{\C^{op}\times\C}_*(B\boxtimes C,(\mathbb{Z}\boxtimes A)\otimes\mathbb{Z}[{\rm Hom}_\C]).
\end{equation}

Par ailleurs, si $t$ est un objet de $\C$ et $T$ un objet de $(\C^{op}\times\C)\md$, on dispose d'un isomorphisme naturel
\[{\rm Tor}^{\C^{op}\times\C}_*(\mathbb{Z}[\C(t,-)]\boxtimes C,T)\simeq {\rm Tor}^\C_*(C,T(t,-))\]
(qui peut se déduire également de l'isomorphisme~\eqref{east}, appliqué cette fois à la catégorie d'éléments $\C_{\C(t,-)}$). Cela implique en particulier
\[{\rm Tor}^{\C^{op}\times\C}_*(\mathbb{Z}[\C(t,-)]\boxtimes C,(\mathbb{Z}\boxtimes A)\otimes\mathbb{Z}[{\rm Hom}_\C])\simeq {\rm Tor}^\C_*(C,\mathbb{Z}[\C(t,-)]\otimes A),\]
ce qui, combiné à~\eqref{eqax}, est exactement l'isomorphisme souhaité pour $B=\mathbb{Z}[\C(t,-)]$. Le cas général s'en déduit par des arguments formels de comparaison de foncteurs homologiques.
\end{proof}

En combinant ce lemme au corollaire~\ref{cor-qab}, on obtient :

\begin{thm}\label{th-biq}
Supposons la conjecture~\ref{conj-biv} vérifiée. Soient $F$ et $G$ des foncteurs analytiques de $_\mathbb{Q}\mdd\gr$ et $\gr\text{-}\,{_\mathbb{Q}\mathbf{Mod}}$ respectivement.

Pour tout $n\in\mathbb{N}$, il existe un isomorphisme
\[H^{st}_n(F\otimes G)\simeq\underset{i+j=n}{\bigoplus}{\rm Tor}^\gr_i(F,G\otimes\Lambda^j(\mathfrak{a}_\mathbb{Q}))\]
 naturel en $F$ et $G$.
\end{thm}
En toute rigueur, le membre de gauche de l'isomorphisme devrait être noté $H_n^{st}(\beta^*(F)\otimes\alpha^*(G))$.

En utilisant ce résultat et les travaux de Vespa \cite{ves-cal}, de manière analogue à ce qu'on a fait dans la section~\ref{scst}, on voit que la conjecture~\ref{conj-biv} permettrait de déterminer entièrement la (co)homologie de ${\rm Aut}(T)$, pour un groupe libre de rang fini assez grand $T$, à coefficients dans ${\rm Hom}(T_{ab}^{\otimes i},T_{ab}^{\otimes j})\otimes\mathbb{Q}$, avec les actions des groupes symétriques $\mathfrak{S}_i$ et $\mathfrak{S}_j$. Cela permettrait en particulier de clarifier le rôle des classes de cohomologiee $h_p$ de Kawazumi \cite[§\,4]{K-Magnus}, dont les classes $\bar{h}_p$ brièvement discutées dans la remarque~\ref{rqk} ne constituent qu'une version <<~réduite~>>.

\part{Homotopie des petites catégories}\label{pg3}

L'objectif de cette partie est d'établir le théorème~\ref{th-hs}. Les méthodes, assez largement indépendantes de celles utilisées dans les parties précédentes, sont de nature plus topologique, avec une coloration $K$-théorique ; elles reposent sur l'étude du type d'homotopie de plusieurs petites catégories, domaine général inauguré par Quillen \cite{QK}. De fait, pour déterminer le type d'homotopie des catégories $\cc(A)$ introduites dans la notation~\ref{ncc}, nous devrons examiner celui de plusieurs catégories auxiliaires. Notre stratégie consiste à remplacer, à l'aide d'une construction de Grothendieck appropriée, les catégories $\cc(A)$ par d'autres catégories, plus maniables grâce à la machinerie de Segal \cite{Seg} associant un spectre à une catégorie monoïdale symétrique. Cela sera fait dans la section~\ref{sth}, dont le point de départ consiste à interpréter les objets de $\cc(A)$ comme des $A$-groupes libres. La section~\ref{scc}, auto-suffisante, s'attache à des constructions homotopico-catégoriques sur les $A$-groupes libres dont on se servira de façon cruciale à la fin de la section~\ref{sth}, où la catégorie $\mathfrak{D}_A(G)$ qu'on montre contractile dans la section~\ref{scc} permettra de mener à bien l'argument de construction de Grothendieck susmentionné.

\section{La catégorie contractile $\mathfrak{D}_A(G)$}\label{scc}

Toute la partie~\ref{pg3} reposant sur l'étude homotopique de catégories liées à des $A$-groupes libres ($A$ étant un groupe fixé, qui sera libre de rang fini dans les cas qui nous intéressent), nous commençons par rappeler cette notion classique.

\begin{nota}\label{not-agl}
 Soit $A$ un groupe. On note $\mathbf{Grp}_A$ la catégorie des groupes munis d'une action (à gauche) du groupe $A$, les morphismes étant les morphismes de groupes $A$-équivariants. Si $G$ est un objet de $\mathbf{Grp}_A$, $g$ un élément de $G$ et $a$ un élément de $A$, on notera $^a\! g$ l'action de $a$ sur $g$.
 
 On note $\mathcal{L}_A : \mathbf{Grp}\to\mathbf{Grp}_A$ l'adjoint à gauche au foncteur d'oubli, de sorte que $\mathcal{L}_A(G)$ est le groupe $G^{* A}$ avec l'action tautologique de $A$. On notera $\underset{a\in A}{\bigstar}^a\! G$ ce produit libre et $^a\! h$ l'image canonique d'un élément $h$ de $G$ dans le facteur correspondant à $a\in A$ (ce qui est cohérent avec la convention précédente). Un $A$-groupe (i.e. un objet de $\mathbf{Grp}_A$) est dit {\em $A$-libre} (ou simplement {\em libre}) s'il est isomorphe à $\mathcal{L}_A(G)$ pour un groupe {\em libre} $G$ ; il revient au même de demander qu'il appartienne à l'image essentielle de l'adjoint à gauche au foncteur d'oubli $\mathbf{Grp}_A\to\mathbf{Ens}$.
 
 On note $\mathbf{gr}_A$ la sous-catégorie pleine de $\mathbf{Grp}_A$ dont les objets sont les $A$-groupes libres\,\footnote{Dans la section~\ref{sth}, une catégorie, notée $\G_A$, dont les objets sont les $A$-groupes libres, mais les morphismes sont convenablement modifiés par rapport à cette définition usuelle, apparaîtra de façon importante.} de type fini $\mathcal{L}_A(\mathbb{Z}^{*n})$ (pour $n\in\mathbb{N}$).
\end{nota}

Au paragraphe~\ref{pnil}, nous établirons un énoncé sur les $A$-groupes libres (qui revient essentiellement à donner des générateurs explicites de leurs groupes d'automorphismes, analogues à ceux de Nielsen pour les automorphismes des groupes libres) qui nous servira à montrer (§\,\ref{pco}) qu'une certaine petite catégorie $\mathfrak{D}_A(G)$ (introduite dans la notation~\ref{ncdc}) associée à un $A$-groupe libre de type fini $G$ est contractile.

\subsection{Transformations de Nielsen des $A$-groupes libres}\label{pnil}

\begin{defi}\label{base-agl}
 Soient $A$ un groupe et $G$ un $A$-groupe. On note $\mathfrak{B}_A(G)$ l'ensemble des sous-groupes $H$ de $G$ tels que $G$ soit le produit libre (interne) des $^a\! H$ pour $a\in A$ (il revient au même de demander que le morphisme $\mathcal{L}_A(H)\to G$ de $\mathbf{Grp}_A$ adjoint à l'inclusion de groupes $H\hookrightarrow G$ soit un isomorphisme).
 
 On appelle {\em $A$-base} (ou simplement {\em base}) de $G$ toute partie génératrice libre d'un sous-groupe de $G$ appartenant à $\mathfrak{B}_A(G)$. On note $\mathcal{B}_A(G)$ l'ensemble de ces bases, qui est donc non vide si et seulement si $G$ est $A$-libre.
 
 Un élément de $G$ est dit {\em unimodulaire} s'il appartient à une $A$-base de $G$.
\end{defi}

(Cette dernière terminologie est inspirée de l'algèbre linéaire.)

On introduit maintenant une généralisation aux $A$-groupes des opérations de réduction de Nielsen pour les groupes, que ce dernier utilisa dans \cite{Ni18} pour montrer que le groupe des automorphismes d'un groupe libre de rang fini est engendré par les automorphimes élémentaires usuels (une bonne référence plus récente sur le sujet est \cite[chap.~I, §\,2]{LS}). Dans le cas où $A$ est le groupe trivial, tous les résultats de ce paragraphe sont très classiques ; il se peut qu'ils soient bien connus, dans toute leur généralité, des experts de la théorie combinatoire des groupes libres, mais l'auteur n'a trouvé aucune référence à ce sujet. 

\begin{defi}
 Une {\em $A$-transformation de Nielsen élémentaire} d'une partie $E$ d'un $A$-groupe $G$ est une opération consistant soit à transformer l'un des éléments de $E$ en son inverse (et laisser les autres éléments de $E$ inchangés), soit à transformer l'un des éléments $h$ de $E$ en $^a\!h$, pour un $a\in A$, soit à transformer l'un des éléments de $E$ en son produit (à gauche ou à droite) par un autre élément de $E$. 
 
 Une {\em $A$-transformation de Nielsen} est une suite finie de $A$-transformations de Nielsen élémentaires.
 
 Deux parties de $G$ sont dites {\em équivalentes à la Nielsen} s'il existe une $A$-transformation de Nielsen envoyant l'une sur l'autre.
\end{defi}

L'objectif de ce paragraphe consiste à établir le résultat suivant, dont un corollaire immédiat (qui possède un intérêt intrinsèque mais ne sera pas utile dans le présent travail) est la description de générateurs élémentaires explicites du groupe ${\rm Aut}_{\mathbf{Grp}_A}(G)$, lorsque $G$ est un $A$-groupe libre ($A$ étant un groupe quelconque) de type fini (i.e. possédant une $A$-base finie).

\begin{thm}\label{th-niel}
 Soient $A$ un groupe et $G$ un $A$-groupe libre de type fini. Toutes les $A$-bases de $G$ sont équivalentes à la Nielsen.
\end{thm}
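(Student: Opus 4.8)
The plan is to set up an $A$-equivariant version of Nielsen reduction and thereby reduce the statement to the classical Nielsen theorem for ordinary free groups. Fix once and for all a reference $A$-base $X_0$ and put $H_0=\langle X_0\rangle$, so that $G=\bigstar_{a\in A}{}^a H_0$ with $H_0$ free on $X_0$; since $G$ is $A$-free of finite type, $G\cong\mathcal{L}_A(\mathbb{Z}^{*n})$ and every $A$-base has the same cardinality $n=|X_0|$. The free-product normal form relative to this decomposition gives a syllable length $\lambda\colon G\to\mathbb{N}$ which is $A$-invariant ($\lambda({}^a g)=\lambda(g)$). For an $A$-base $Y=\{y_1,\dots,y_n\}$ I set $\Lambda(Y)=\sum_i\lambda(y_i)$; since each $y_i\neq 1$ one has $\Lambda(Y)\geq n$, with equality for $X_0$.

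Two elementary manipulations underlie everything. First, although a type-3 transformation only multiplies $y_i$ by another generator $y_j$, conjugating by type-2 moves (translate $y_j$ to ${}^c y_j$, multiply, translate back) shows that $y_i\mapsto({}^c y_j)\cdot y_i$ and $y_i\mapsto y_i\cdot({}^c y_j)$ are $A$-Nielsen transformations for every $c\in A$ and every $j\neq i$: one may multiply any generator by an arbitrary translate of a \emph{distinct} generator. Second, if each $y_i$ can be brought to $\lambda(y_i)=1$, i.e.\ into a single factor ${}^{a_i}H_0$, then after type-2 moves I may assume $Y\subset H_0$; the retraction $G\to H_0$ killing the factors ${}^a H_0$ for $a\neq e$ then carries $G=\bigstar_{a\in A}{}^a\langle Y\rangle$ onto $\langle Y\rangle$, forcing $\langle Y\rangle=H_0$. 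Thus $Y$ is an ordinary basis of the free group $H_0$ of rank $n$, and the classical Nielsen theorem provides ordinary Nielsen transformations — which are $A$-Nielsen transformations of types 1 and 3 together with reordering — carrying $Y$ onto $X_0$. Any two $A$-bases being then each equivalent to $X_0$, they are equivalent to one another.

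The whole weight of the proof therefore rests on the reduction lemma: every $A$-base can be carried by $A$-Nielsen transformations to one with $\Lambda=n$, equivalently with all $\lambda(y_i)=1$. I would minimise $\Lambda$ within the $A$-Nielsen class of a given $Y$ and show a minimiser has every syllable length equal to $1$. Assuming $\lambda(y_i)\geq 2$ for some $i$, one inspects the free-product normal forms and uses that $\{{}^a y_j\}$ generates $G$ to exhibit, as in the classical reduction lemma, a translate ${}^c y_j$ of a generator whose leading or trailing factor cancels a factor of $y_i$; the equivariant move above then strictly lowers $\lambda(y_i)$, hence $\Lambda$, contradicting minimality.

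The main obstacle is precisely that this reduction must be \emph{equivariant}: the move is legitimate only for $j\neq i$, whereas the purely combinatorial reduction of the infinite ordinary basis $\{{}^a y_i\}_{a,i}$ may a priori demand cancelling one translate ${}^a y_i$ against another translate ${}^b y_i$ of the \emph{same} generator, which corresponds to no $A$-Nielsen transformation of $Y$. Ruling out that this same-orbit cancellation can block the descent of $\Lambda$ is the crux. I expect to handle it using the rigidity inherent in $Y$ being a genuine $A$-base — both $\bigstar_{a\in A}{}^a\langle Y\rangle$ and $\bigstar_{a\in A}{}^a H_0$ are free-product decompositions of $G$ over $A$ — invoking the Kurosh subgroup theorem to show that a $\Lambda$-minimal $A$-base admits no reduced cancellation between distinct translates of a single $y_i$, so that every residual obstruction to $\lambda(y_i)=1$ is a cross-generator cancellation, removable by the equivariant type-3 move.
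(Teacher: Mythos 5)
Your setup and endgame are sound: the composite move $y_i \mapsto y_i\cdot{}^c y_j$ (for $j\neq i$, obtained by conjugating a type-3 move with type-2 moves) is indeed an $A$-Nielsen transformation; and once every $y_i$ lies in a single factor ${}^{a_i}H_0$, your retraction argument correctly shows $\langle Y\rangle=H_0$, so the classical Nielsen theorem finishes the proof. But the entire mathematical content of the theorem lives in the step you leave open. Your reduction lemma --- that a $\Lambda$-minimal $A$-base has all syllable lengths equal to $1$ --- requires ruling out exactly the scenario you name yourself: that the cancellation forced by the generating property only occurs between two translates ${}^a y_i$ and ${}^b y_i$ of the \emph{same} generator, which no $A$-Nielsen move can exploit (and which, as your own example-type reasoning shows, would not even preserve the property of being an $A$-base). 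You do not prove that this cannot block the descent; you say you \emph{expect} to handle it via the Kurosh subgroup theorem, without saying to which subgroup Kurosh should be applied or how its conclusion (a free decomposition into a free part and conjugates of subgroups of the factors) would forbid same-orbit cancellation in a minimal base. As it stands this is a conjecture about the crux, not a proof of it.

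For comparison, the paper's proof (an adaptation of Hoare's length-function version of Nielsen reduction) confronts precisely this obstruction and disposes of it with a specific new ingredient, proposition~\ref{unimod}: an element of the form $x\cdot{}^a x^{-1}$ with $a\neq 1$ is \emph{never} unimodular, i.e.\ belongs to no $A$-base whatsoever. Its proof is a global parity argument: with respect to the word-length function attached to \emph{every} $A$-base one has $|x\cdot{}^a x^{-1}|=2|x|$ (property (A')), so such an element can never have length $1$. Then, in the descent step (proposition~\ref{prhoa}), whenever two translates of the same generator occur in a blocking cancellation pattern, the length-function axioms (A4), (A5'), (A0') are used to manufacture an element of the form $r\cdot{}^a r^{-1}$ that would have to be unimodular --- a contradiction. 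Note that this argument quantifies over all $A$-bases at once, not just over normal forms relative to the fixed reference base; nothing of comparable strength appears in your sketch. To salvage your syllable-length variant you would need to prove an analogue of this unimodularity statement and carry out the corresponding case analysis showing that every same-orbit cancellation pattern in a $\Lambda$-minimal base leads to it; until then the proposal has a genuine gap at its central step.
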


Pour le voir, on adapte la méthode suivie par Hoare \cite{Hoa}, elle-même inspirée d'un travail beaucoup plus ancien de Petresco \cite{Petresco} et reposant sur le formalisme des {\em fonctions longueurs} introduit par Lyndon \cite{Lyn}.

Fixons-nous un élément $\mathbf{e}=\{e_1,\dots,e_n\}$ (où les $e_i$ sont deux à deux distincts) de $\B_A(G)$ et considérons la {\em fonction longueur} associée sur $G$, notée $x\mapsto |x|$ (ou $|x|_\mathbf{e}$ lorsqu'une ambiguïté est possible), c'est-à-dire la fonction longueur sur le groupe libre $G$ provenant de sa <<~base~>> constituée des $^a\!e_i$ pour $a\in A$ et $i\in\{1,\dots,n\}$. On note également $d(x,y)=\frac{1}{2}(|x|+|y|-|xy^{-1}|)$ pour $(x,y)\in G^2$. Ces fonctions vérifient les propriétés suivantes.

(C0) $|.|$ et $d$ prennent leurs valeurs dans les entiers ;

(A0') pour $x\in G$ et $a\in A$, on a $|x|<|x.\,^a\!x|$, sauf si $x$ est de la forme $t.\,^a\!t^{-1}$ pour un $t\in G$ ;

(A1) $|x|\geq 0$ pour tout $x\in G$, et $|x|=0$ si et seulement si $x=1$ ;

(A2) $|x^{-1}|=|x|$ ;

(A2') pour tous $x\in G$ et $a\in A$, $|^a\! x|=|x|$ ;

(A') pour $x\in G$ et $a\in A$, si $a\neq 1$, alors $|x.\,^a\!x^{-1}|=2|x|$ (ce qui équivaut à $d(x,\,^a\!x)=0$ compte-tenu de (A2')) ;

(A3) $d(x,y)\geq 0$ ;

(A4) $d(x,y)<d(x,z)$ implique $d(y,z)=d(x,y)$ (souvent utilisé sous la forme : $d(x,y)\geq m$ et $d(y,z)\geq m$ impliquent $d(x,z)\geq m$) ;

(A5') Pour $(x,y)\in G^2$ et $a\in A$ tels que $d(x,y)+d(^a\!x^{-1},y^{-1})\geq |x|=|y|$, il existe $(r,s)\in G^2$ tel que $x=rs$ et $y=\, ^a\!r.s$

Les propriétés numérotées sans prime ne dépendent pas de l'action de $A$ et sont établies dans \cite{Lyn}. La propriété (A2') est évidente ; montrons (A0'), (A') et (A5').

(A0') : Écrivons $x$ sous la forme d'un mot réduit
$$x=\,^{\alpha_1}\!b_1\dots\,^{\alpha_j}\!b_j$$
où $b_i\in\mathbf{e}\cup\mathbf{e}^{-1}$ et $^{\alpha_i}\!b_i.\,^{\alpha_{i+1}}\!b_{i+1}\neq 1$. Alors $|x|=j$, et $|x.\,^a\!x|=2(j-k)$ où $k$ est le plus grand entier tel que $^{\alpha_{j-i+1}}\!b_{j-i+1}.\,^{a\alpha_{i}}\!b_{i}=1$ pour $1\leq i\leq k$. On en déduit que $x$ s'écrit (de façon unique) sous la forme d'un mot réduit $t.u.\,^a\!t^{-1}$ avec $|t|=k$, et $|u.\,^a\!u|=2|u|$. On a $|x|=2|t|+|u|<2|t|+2|u|=|t.u.\,^a\!u.\,^{a^2}\!t^{-1}|=|x.\,^a\!x|$, sauf si $u=1$ i.e. $x=t.\,^a\!t^{-1}$.

(A') : dans l'écriture de $x$ sous forme de mot réduit, la première lettre de $^a\,x^{-1}$ est égale à $^a\!y^{-1}$, où $y$ est la dernière lettre de $x$. Comme $y.^a\!y^{-1}\neq 1$ (pour $a\neq 1$), il n'y a pas de simplification possible, d'où le résultat.

Une conséquence de (A') est la suivante.

\begin{pr}\label{unimod}
 Soient $x\in G$ et $a\in A$. Alors l'élément $x.\,^a\!x^{-1}$ de $G$ n'est jamais unimodulaire.
\end{pr}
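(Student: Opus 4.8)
The plan is to argue by contradiction, turning the unimodularity of $x.\,^a\!x^{-1}$ into a parity obstruction coming from~(A'). First I would dispose of the case $a=1$: then $x.\,^a\!x^{-1}=x.x^{-1}=1$, and the neutral element lies in no free generating set, so it is certainly not unimodular. Assume henceforth $a\neq 1$ and suppose, for contradiction, that $w:=x.\,^a\!x^{-1}$ is unimodular, i.e. that $w$ belongs to some $A$-base $\mathbf{f}=\{f_1,\dots,f_n\}\in\B_A(G)$; after renumbering, $w=f_1$.

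The key observation --- and the only point that needs care --- is that property~(A') is not attached to the fixed base $\mathbf{e}$ but holds verbatim for the length function $|\cdot|_{\mathbf{f}}$ associated with any $A$-base. Indeed, the proof of~(A') used solely that $\{{}^b\!e_i\,|\,b\in A,\,1\leq i\leq n\}$ freely generates $G$ and that $A$ permutes this generating set freely, so that for a generator $g$ the equality ${}^a\!g=g$ forces $a=1$. Both facts persist for $\{{}^b\!f_i\}$: since $\mathbf{f}$ is a base, the canonical map $\mathcal{L}_A(\langle f_1,\dots,f_n\rangle)\to G$ is an isomorphism, whence $\{{}^b\!f_i\}$ is a free generating set of $G$, and the $A$-action on it is free because the stabiliser of ${}^b\!f_i$ reduces to $\{1\}$. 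Running the argument of~(A') with $\mathbf{f}$ in place of $\mathbf{e}$ therefore gives $|w|_{\mathbf{f}}=|x.\,^a\!x^{-1}|_{\mathbf{f}}=2\,|x|_{\mathbf{f}}$, an even integer.

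On the other hand $w=f_1={}^1\!f_1$ is one of the free generators ${}^b\!f_i$, so $|w|_{\mathbf{f}}=1$. Comparing the two evaluations yields $2\,|x|_{\mathbf{f}}=1$, which is impossible since $|x|_{\mathbf{f}}$ is a nonnegative integer by~(C0). This contradiction proves that $w$ is never unimodular. I expect the base-independence of~(A') to be the main thing to justify carefully; everything else is a one-line parity count. Note finally that the argument invokes neither Theorem~\ref{th-niel} nor the Nielsen-reduction machinery, so no circularity arises: Proposition~\ref{unimod} is meant to feed into the proof of Theorem~\ref{th-niel}.
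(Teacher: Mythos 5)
Your proof is correct and is essentially the paper's own argument: the paper likewise treats $a=1$ trivially and then notes that, by (A'), $|x.\,^a\!x^{-1}|_{\mathbf{e}}\neq 1$ for \emph{every} base $\mathbf{e}\in\B_A(G)$ (being even, equal to $2|x|_{\mathbf{e}}$), which forbids the element from being a term of any base. Your careful justification that (A') is base-independent is precisely the point the paper compresses into its emphasized ``tout''.
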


\begin{proof}
 Si $a=1$, c'est évident (l'élément est le neutre). Sinon, par (A'), $|x.\,^a\!x^{-1}|_\mathbf{e}\neq 1$ pour {\em tout} élément $\mathbf{e}$ de $\B_A(G)$, ce qui interdit que $x$ soit un terme de $\mathbf{e}$.
\end{proof}

(A5') : écrivons $x=rs$ et $y=ts$ de sorte que les mots $rs$, $ts$ et $rt^{-1}$ soient réduits --- ainsi, $|s|=d(x,y)$. On a $^a\!x^{-1}=\,^a\!s^{-1}.\,^a\!r^{-1}$ et $y^{-1}=s^{-1}t^{-1}$ ; ces écritures sont réduites et par hypothèse $d(\,^a\!x^{-1},y^{-1})\geq |x|-d(x,y)=|r|$. Comme $|r|=|t|$ (car $|x|=|y|$), on en déduit $^a\!r^{-1}=t^{-1}$ i.e. $t=\,^a\!r$, d'où la conclusion.

Pour démontrer le théorème~\ref{th-niel}, utilisons la terminologie suivante, tirée de \cite{Petresco}. Tout d'abord, si $E$ est un sous-ensemble de $G$, on note $\bar{E}$ l'ensemble des éléments de $G$ de la forme $^a\!x$ ou $^a\!x^{-1}$ pour un $x\in E$ et un $a\in A$.  Soit $\mathbf{e}$ un élément de $\B_A(G)$. On dit qu'un sous-ensemble $E$ de $G$ est {\em $\mathbf{e}$-progressif} (ou simplement  {\em progressif} s'il n'y a pas d'ambiguïté sur $\mathbf{e}$) si pour toute suite finie non vide $(u_0,\dots,u_n)$ d'éléments de $\bar{E}$, on a
$$|u_0.u_1\dots u_n|_\mathbf{e}\geq |u_1\dots u_n|_\mathbf{e}$$
dès lors que $u_{i-1}.u_i\neq 1$ pour tout $1\leq i\leq n$. Si c'est le cas, sous la même hypothèse, on a aussi $|u_0.u_1\dots u_n|\geq |u_i\dots u_j|$ pour tous entiers $0\leq i\leq j\leq n$.

Disons par ailleurs qu'un sous-ensemble de $G$ en est une {\em $A$-base partielle} si c'est une partie d'une $A$-base. Le théorème~\ref{th-niel} se déduira aisément de la proposition suivante, qui adapte le {\em Theorem} de \cite{Hoa}.

\begin{pr}\label{prhoa}
 Soient $G$ un $A$-groupe libre de type fini muni d'une $A$-base, $X$ une $A$-base partielle progressive de $G$, $x'$ un élément de $G$ tel que $|x'|\geq |x|$ pour tout $x\in X$ et $X':=X\cup\{x'\}$. On suppose également que $X'$ est encore une $A$-base partielle de $G$. Si $X'$ n'est pas progressif, alors il existe une transformation de Nielsen de $X'$ qui laisse invariant $X$ et réduit strictement la longueur de $x'$.
\end{pr}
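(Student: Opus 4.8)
Le plan consiste à traduire le défaut de progressivité de $X'$ en une annulation de plus de la moitié de $x'$ contre un conjugué d'un élément de $X$, puis à réaliser la réduction de longueur correspondante par une $A$-transformation de Nielsen laissant $X$ invariant. Je commencerais par dérouler la définition : puisque $X'$ n'est pas progressif alors que $X$ l'est, il existe une suite finie $(u_0,\dots,u_n)$ d'éléments de $\bar{X'}$, avec $u_{i-1}u_i\neq 1$ pour tout $i$ et $|u_0 u_1\cdots u_n|<|u_1\cdots u_n|$, faisant nécessairement intervenir $x'$ ; j'en choisirais une de longueur $n$ minimale. En notant $w=u_1\cdots u_n$, l'hypothèse se réécrit $p(u_0^{-1},w)>|u_0|/2$, où $p(a,b)$ désigne la longueur du plus long préfixe commun, reliée à $d$ et $|\cdot|$ par $|ab|=|a|+|b|-2p(a^{-1},b)$.

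Le premier point est de localiser l'annulation sur une paire. En utilisant la progressivité de $X$ et l'inégalité (A4) pour propager les recouvrements le long de $w$, la minimalité de $n$ force le facteur gauche fautif $u_0$ à être de la forme $^a(x')^{\pm 1}$ et ramène la situation au cas $n=1$, soit $|u_0 u_1|<|u_1|$ avec $u_0 u_1\neq 1$. Le partenaire $u_1$ ne peut être un conjugué de $x'$ : par (A') (ou la proposition~\ref{unimod}), un élément et un de ses conjugués non triviaux ne s'annulent pas, et le cas du conjugué trivial donnerait $u_0 u_1=1$ ou contredirait le caractère de base partielle de $X'$. Donc $u_1=\,^b x^{\delta}$ pour un $x\in X$, $b\in A$, $\delta=\pm 1$. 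Comme $|u_0|=|x'|\geq |x|=|u_1|$, l'inégalité $p(u_0^{-1},u_1)>|u_0|/2\geq |u_1|/2$ donne $|u_0 u_1|<|u_1|\leq |x'|$ ; l'axiome (A5') fournit de plus la factorisation explicite témoignant que cette annulation est bien celle d'un préfixe $A$-équivariant, donc correspond à une multiplication par un conjugué de $x^{\delta}$.

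Il reste à conclure. En conjuguant la relation par $a^{-1}$ et en invoquant l'invariance (A2') de la longueur sous l'action de $A$, on obtient $|(x')^{\epsilon}\cdot\,^{c}x^{\delta}|<|x'|$ avec $c=a^{-1}b$ et $\epsilon=\pm 1$ ; quitte à inverser (axiome (A2)), cela signifie que l'on peut réduire strictement $|x'|$ en le multipliant, à gauche ou à droite, par $^{c}x^{\pm 1}$. Or cette multiplication est une $A$-transformation de Nielsen fixant $X$ : on conjugue $x$ en $^{c}x$, on l'inverse si besoin, on multiplie $x'$ par l'élément ainsi obtenu, puis on reconjugue par $c^{-1}$ pour restituer $x$. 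La composée laisse $X$ inchangé et remplace $x'$ par un élément strictement plus court, ce qui est la conclusion voulue.

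Le point délicat sera l'argument de localisation du deuxième paragraphe : contrôler, via (A4) et la progressivité de $X$, que le préfixe commun de $u_0^{-1}$ et de $w$ reste confiné dans $u_1$ (sinon $u_0^{-1}$ contiendrait $u_1$ et l'on raccourcirait la suite, contredisant la minimalité), ainsi que la gestion soigneuse des réductions de mots et du cas-limite d'égalité des longueurs, où (A5') est indispensable pour identifier l'annulation à une véritable multiplication par un conjugué.
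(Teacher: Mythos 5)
Your plan shares the paper's general spirit (minimal violating sequence, unimodularity via Proposition~\ref{unimod} to exclude a second occurrence of $x'$, realization of the reduction by a composite Nielsen move that temporarily conjugates an element of $X$), but the step on which everything rests — that minimality of $n$ localizes the failure at $u_0$ and reduces to $n=1$ — is not only unjustified, it is false under the hypotheses of the proposition. Counterexample (with $A$ trivial): in $G=F(a,b,c,d)$, take $X=\{ab,\,b^{-1}cd\}$ and $x'=bd^{-1}c^{-1}a^{-1}$. Then $X$ is a progressive partial basis (the only cancelling pairs in $\bar{X}$ are $(ab)(b^{-1}cd)=acd$ and its inverse, so each factor of a reduced product loses at most one letter, while all factors have length at least $2$); moreover $|x'|=4\geq |x|$ for all $x\in X$, and $X'=X\cup\{x'\}$ is a partial basis, since $x'\cdot(ab)\cdot(b^{-1}cd)=b$ gives $\langle X'\rangle=\langle a,b,cd\rangle$, a free factor of $G$ of which $X'$ is a basis. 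Now $X'$ is not progressive, because $|x'\,(ab)(b^{-1}cd)|=1<3=|(ab)(b^{-1}cd)|$; but a direct check of all pairs $u,v\in\bar{X}'$ with $uv\neq 1$ shows $|uv|\geq|v|$: there is no violating pair, so the minimal violating sequence has three terms. Worse for your argument: no single multiplication of $x'^{\pm 1}$ by one element of $\bar{X}$ (on either side) reduces $|x'|$ — the shortest such products have length $4$ — so the Nielsen transformation required by the statement is necessarily a composite, here $x'\mapsto x'(ab)(b^{-1}cd)=b$. Your proof, carried out on this example, produces nothing. (A secondary inaccuracy: even when a violating pair exists, the $x'$-letter need not be the leftmost factor $u_0$; with $X=\{ab\}$ and $x'=b^{-1}a^{-1}cd$, the only violating pair is $(ab,x')$. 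And the appeal to minimality "sinon l'on raccourcirait la suite" is illegitimate: the terms of a violating sequence must lie in $\bar{X}'$, and $u_0u_1$ is not such a term.)

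This is precisely why the paper's proof never reduces to pairs. It keeps the minimal sequence $(b_0,\dots,b_n)$ of arbitrary length, applies the lemma of Hoare \cite{Hoa} to obtain $|c_0|=\dots=|c_{n-1}|$ for the suffixes $c_i=b_{i+1}\cdots b_n$, and then devotes the bulk of the argument to proving that a conjugate of $x'^{\pm 1}$ occurs \emph{exactly once} among the $b_k$: this is where (A0'), (A4), (A5') and Proposition~\ref{unimod} are combined with the hypothesis that $X'$ is a partial basis (stable under Nielsen transformations) to show that a second occurrence would force an element of the form $r.\,^a\!r^{-1}$ to be unimodular, a contradiction. Once uniqueness is acquired, the Nielsen transformation replaces $x'$ by the whole word $b_0\cdots b_n$, of length $<|c_0|=|b_n|\leq|x'|$ — in general a product with several elements of $\bar{X}$, as the example above forces. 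To repair your proposal you would have to supply exactly this uniqueness-of-occurrence argument for arbitrary $n$; the confinement claim you defer to the end ("le point délicat") is the whole content of the proof.
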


\begin{proof}
 Supposons que $X'$ n'est pas progressif : il existe alors des éléments $b_0,\dots,b_n$ de $\overline{X'}$ tels que $|b_0 b_1\dots b_n|<|b_1\dots b_n|$. On peut supposer que $n$ est minimal parmi les entiers pour lesquels existe un tel phénomène. Pour $0\leq i\leq n$, posons $c_i=b_{i+1}\dots b_n$ (de sorte que $c_{n-1}=b_n$ et $c_n=1$). Comme dans la démonstration du {\em Theorem} de \cite{Hoa}, en appliquant son {\em Lemma}, on voit que $|c_0|=|c_1|=\dots=|c_{n-1}|$.
   
 Comme $X$ est progressif, il existe $i$ tel que $b_i$ soit de la forme $^a\!x'$ ou $^a\!x'^{-1}$ pour un $a\in A$ ; soit $i$ le plus petit tel entier. S'il existe au moins deux tels entiers, nous noterons $j$ le plus petit strictement supérieur à $i$.
 
 On va voir qu'il ne peut pas exister deux tels entiers. Cela permettra de conclure, car l'unicité de l'occurrence de $x'$ dans le mot $w=b_0\dots b_n$ montre qu'il existe une transformation de Nielsen qui laisse invariant $X$ et transforme $x'$ en $w$, qui est de longueur strictement inférieure à la longueur de $x$ (car $|w|<|c_0|$ par hypothèse, et $|c_0|=|c_{n-1}|=|b_n|$, or $b_n$ appartient à $\overline{X'}$, donc est de longueur au plus égale à celle de $x'$).
 
 On cherche donc à établir une contradiction, en supposant que $i$ et $j$ existent. On va distinguer à cet effet plusieurs cas, en notant qu'on a soit $b_j=\,^a\!b_i$ pour un $a\in A$, soit $b_j=\,^a\!b_i^{-1}$. Mais on fait tout d'abord quelques remarques préliminaires.
 
 Pour $1\leq k\leq n-1$, on a
 $$d(c_{k-1},c_k)=\frac{1}{2}(|c_{k-1}|+|c_k|-|b_k|)=|c_0|-|b_k|/2\geq |c_0|-|b_i|/2$$
 puisque $|b_i|=|x'|\geq |b_k|$ pour tout $k$ par hypothèse. En utilisant (A4), on en déduit
 \begin{equation}\label{eqa1}
  d(c_l,c_k)\geq |c_0|-|b_i|/2\quad\text{pour}\quad 0\leq l<k\leq n-1.
 \end{equation}

 On a également
\begin{equation}\label{eqa2}
 d(c_k^{-1},b_k)=\frac{1}{2}(|c_k|+|b_k|-|c_{k-1}|)=\frac{1}{2}|b_k|\quad\text{pour}\quad 1\leq k\leq n-1.
\end{equation}

 Supposons d'abord $b_j=\,^a\!b_i$ ($a\in A$) et $j\leq n-1$. Par (\ref{eqa2}), on a $d(c_j^{-1},^a\!b_i)=d(c_j^{-1},b_j)=\frac{1}{2}|b_j|=\frac{1}{2}|b_i|$ et $d(^a\!c_i^{-1},^a\!b_i)=d(c_i^{-1},b_i)=\frac{1}{2}|b_i|$, d'où par (A4) $d(^a\!c_i^{-1},c_j^{-1})\geq\frac{1}{2}|b_i|$. Utilisant (\ref{eqa1}) pour $l=i$ et $k=j$, on en déduit
 $$d(^a\!c_i^{-1},c_j^{-1})+d(c_i,c_j)\geq |c_0|=|c_i|=|c_j|.$$
 Par (A5'), on en déduit qu'il existe $r\in G$ tel que $c_i c_j^{-1}=r.^a\!r^{-1}$. Or $c_i c_j^{-1}=b_{i+1}\dots b_j$ est unimodulaire, car $b_j$ est de la forme $^\alpha\!x'$ ou $^\alpha\!x'^{-1}$ avec $\alpha\in A$, tandis que $b_{i+1},\dots,b_{j-1}$ appartiennent à $\bar{X}$ (par définition de $i$ et $j$), de sorte qu'il existe une transformation de Nielsen qui change $x'$ en $c_i c_j^{-1}$ et laisse $X$ invariant ; comme par hypothèse $X'$ est une $A$-base partielle de $G$, notion préservée par transformation de Nielsen, on voit que $c_i c_j^{-1}$ appartient à une $A$-base partielle, i.e. est unimodulaire, de sorte que la proposition~\ref{unimod} fournit une contradiction.
 
 Supposons maintenant $b_j=\,^a\!b_i$ ($a\in A$) et $j=n$. Comme $b_n=c_{n-1}$, par (\ref{eqa1}) on a $d(c_i,b_n)\geq |c_0|-|b_i|/2=|c_{n-1}|-\frac{1}{2}|b_n|=\frac{1}{2}|b_n|$. Par (\ref{eqa2}) on a d'autre part $d(^a\! c_i^{-1},b_n)=d(^a\! c_i^{-1},^a\!b_i)=d(c_i^{-1},b_i)=\frac{1}{2}|b_i|=\frac{1}{2}|b_n|$. Par (A4), on en tire $d(c_i,^a\!c_i^{-1})\geq\frac{1}{2}|b_n|=\frac{1}{2}|c_{n-1}|=\frac{1}{2}|c_i|$. Comme $d(c_i,^a\!c_i^{-1})=|c_i|-\frac{1}{2}|c_i.^a\!c_i|$, il vient $|c_i.^a\!c_i|\leq |c_i|$ puis, par (A0') que $c_i$ est de la forme $t.^a\!t^{-1}$, donc non unimodulaire (proposition~\ref{unimod}). Comme $c_i=c_i c_n^{-1}$, on en tire une contradiction comme précédemment.
 
 Supposons enfin $b_j=\,^a\!b_i^{-1}$ ($a\in A$). On a
 $$d(c_{j-1}^{-1},^a\!b_i)=d(c_{j-1}^{-1},b_j^{-1})=\frac{1}{2}(|c_{j-1}|+|b_j|-|c_j|)\geq\frac{1}{2}|b_j|=\frac{1}{2}|b_i|$$
(en effet, si $j<n$, alors $|c_{j-1}|=|c_j|$, et si $j=n$, alors $|c_n|=0$). On a également $|b_i c_i|\leq |c_i|$ : pour $i>0$, on a égalité car $b_i c_i=c_{i-1}$ (et $i<n$) ; pour $i=0$, on a une inégalité stricte par l'hypothèse initiale. En conséquence, on a
$$d(^a\!c_i^{-1},^a\!b_i)=d(c_i^{-1},b_i)=\frac{1}{2}(|c_i|+|b_i|-|b_i c_i|)\geq\frac{1}{2}|b_i|.$$
Les deux inégalités qu'on a établies impliquent, par (A4) : $d(^a\!c_i^{-1}c_{j-1}^{-1})\geq\frac{1}{2}|b_i|$. Si l'on suppose de plus $i<j-1$, par (\ref{eqa1}), comme on a $d(c_i,c_{j-1})\geq |c_0|-\frac{1}{2}|b_i|$, il vient
$$d(^a\!c_i^{-1},c_{j-1}^{-1})+d(c_i,c_{j-1})\geq |c_0|=|c_i|=|c_{j-1}|.$$
On en déduit une contradiction par le même argument que plus haut.

Reste à exclure le cas $b_j=\,^a\!b_i^{-1}$ et $j=i+1$. On a alors forcément $a\neq 1$, car sinon on pourrait simplifier $b_i.b_{i+1}=1$, contredisant la minimalité de $n$. Par (A'), $|b_i b_{i+1}|=2|b_i|$, donc $d(b_i b_{i+1},b_{i+1})=\frac{1}{2}(2|b_i|+|b_{i+1}|-|b_i|)=|b_i|$. Par ailleurs, si $i<n-1$, alors $|b_i b_{i+1} c_{i+1}|\leq|c_{i+1}|$ (inégalité stricte si $i=0$, égalité sinon, car $|c_{i-1}|=|c_{i+1}|$ pour $i+1<n$). Mais $i=n-1$ est exclu, car $|b_i.b_{i+1}|>|b_{i+1}|$, ce qui contredit l'hypothèse si $n=1$ et l'égalité $|c_{n-1}|=|c_{n-2}|$ si $n\geq 2$. De $|b_i b_{i+1} c_{i+1}|\leq|c_{i+1}|$ on tire $d(b_i b_{i+1},c_{i+1}^{-1})\geq\frac{1}{2}|b_i b_{i+1}|=|b_i|$. Par (A4) (et l'égalité $d(b_i b_{i+1},b_{i+1})=|b_i|$), on en déduit $|b_i|\leq d(b_{i+1},c_{i+1}^{-1})=\frac{1}{2}(|b_{i+1}|+|c_{i+1}|-|b_{i+1}c_{i+1}|)=\frac{1}{2}(|b_{i}|+|c_{i+1}|-|c_{i}|)=\frac{1}{2}|b_{i}|$ puisque $i+1\leq n-1$. C'est manifestement absurde ($b_i\neq 1$), d'où la conclusion.
\end{proof}

\begin{proof}[Démonstration du théorème~\ref{th-niel}]
 Soit $\mathbf{e}$ une $A$-base de $G$. Appelons {\em longueur} de $\mathbf{e}$ la suite ordonnée des longueurs des éléments de $\mathbf{e}$ (avec ses éventuelles répétitions), classées par ordre croissant. Supposons que $\mathbf{e}$ n'est équivalente à la Nielsen à aucune $A$-base progressive de $G$. On peut supposer que, parmi ces bases, $\mathbf{e}$ est de longueur minimale, pour l'ordre lexicographique sur les suites finies d'entiers naturels. Si $\mathbf{e}=\{e_1,\dots,e_n\}$ avec $|e_1|\leq\dots\leq |e_n|$, notons $i$ le plus grand entier naturel (éventuellement nul) tel que $\{e_1,\dots,e_i\}$ soit progressive : on a par hypothèse $i<n$. Appliquant la proposition~\ref{prhoa}, on voit que $\{e_1,\dots,e_{i+1}\}$ est équivalente à la Nielsen à une suite $\{e_1,\dots,e_i,e'_{i+1}\}$ où $|e'_{i+1}|<|e_{i+1}|$ ; $\mathbf{e}$ est alors équivalente à la Nielsen à $\{e_1,\dots,e_i,e'_{i+1}, e_{i+2},\dots,e_n\}$, ce qui contredit la minimalité de la longueur de $\mathbf{e}$.
 
 Il suffit donc, pour conclure, de voir qu'une base progressive est équivalente à la Nielsen à la base $\mathbf{b}$ fixée depuis le début sur $G$ (celle relativement à laquelle on prend la longueur). En effet, pour tout $i$, $b_i$, qui est de longueur $1$, s'écrit comme un produit $x_1...x_t$ d'éléments non triviaux de $\bar{\mathbf{e}}$, puisque $\mathbf{e}$ est une $A$-base de $G$. Mais l'hypothèse de progressivité implique que chaque $x_j$ est de longueur $1$, et aussi les $x_j x_{j+1}$ (on suppose le mot réduit). Mais $|x_j|=|x_{j+1}|=|x_j x_{j+1}|=1$ est impossible, donc $t=1$ et $b_i\in\bar{\mathbf{e}}$ pour tout $i$, ce qui montre que notre base $\mathbf{e}$ est équivalente à $\mathbf{b}$. 
\end{proof}

\subsection{Un critère abstrait de discrétion}\label{scritabst}

Donnons d'abord une notation : si $E$ est un ensemble préordonné (qu'on voit aussi bien, comme d'habitude, comme une petite catégorie ou un ensemble simplicial via le foncteur nerf, ce qui permet de parler de son type d'homotopie) et $x$ un élément de $E$, on note $[x,\to[_E$, ou simplement $[x,\to[$, l'ensemble $\{t\in E\,|\,x\leq t\}$.

On commence par un lemme général, classique et élémentaire.

\begin{lm}\label{contrac-elem}
 Soit $E$ un ensemble préordonné. On suppose que tout élément de $E$ est supérieur ou égal à un élément minimal et que les deux conditions suivantes sont vérifiées :
 \begin{enumerate}
  \item\label{hun} pour tout ensemble fini non vide $X$ d'éléments minimaux de $E$, le sous-ensemble préordonné $\underset{x\in X}{\bigcap}[x,\to[$ de $E$ est vide ou contractile ;
  \item pour tout ensemble fini non vide $X$ d'éléments minimaux de $E$, si les intersections $[x,\to[\, \cap\, [y,\to[$ sont non vides pour tous éléments $x$ et $y$ de $X$, alors $\underset{x\in X}{\bigcap}[x,\to[$ est non vide.
 \end{enumerate}
Alors $E$ a un type d'homotopie discret.
\end{lm}

\begin{proof}
 L'ensemble des parties $P$ de $E$ telles que $x\in P$ et $x\leq y$ implique $y\in P$ est stable par intersection et réunion, il contient les parties de la forme $[x,\to[$ et la restriction du foncteur nerf à ces parties de $E$ commute aux réunions (elle commute aussi, toujours, aux intersections). 
 
 Par conséquent, l'hypothèse~\ref{hun} montre que l'ensemble simplicial $\N(E)$ a le même type d'homotopie que le nerf du recouvement de $E$ par les $[x,\to[$ pour $x$ minimal. La dernière hypothèse implique que chaque composante connexe de ce nerf est contractile, d'où le lemme.
\end{proof}

Considérons maintenant la situation suivante : $A$ est un groupe, $X$ un ensemble muni d'une action (à gauche) de $A$. On suppose données deux opérations sur les familles (finies ou non) d'éléments de $X$ :
\begin{enumerate}
 \item une opération (partout définie), notée $\bigwedge$, compatible à l'action de $A$, c'est-à-dire telle que
 $$a.\Big(\underset{i\in I}{\bigwedge}x_i\Big)=\underset{i\in I}{\bigwedge} a.x_i$$
 pour tout $a\in A$ et toute famille $(x_i)_{i\in I}$ d'éléments de $X$, envoyant une famille réduite à un élément sur cet élément et vérifiant la propriété d'associativité usuelle
 $$\underset{i\in I}{\bigwedge}x_i=\underset{J\in\J}{\bigwedge}\Big(\underset{j\in J}{\bigwedge}x_j\Big)$$
 si l'ensemble $I$ est la réunion disjointe de l'ensemble $\J$
 et de commutativité : si $\phi : I\to I$ est une bijection, alors $\underset{i\in I}{\bigwedge}x_{\phi(i)}=\underset{i\in I}{\bigwedge}x_i$ ;
 \item une opération {\em partiellement définie}, appelée somme et notée $\sum$, compatible à l'action de $A$ (c'est-à-dire que, si $(x_i)_{i\in I}$ est une famille d'éléments de $X$ et $a$ un élément de $A$, $\underset{i\in I}{\sum} a.x_i$ est défini si et seulement si $\underset{i\in I}{\sum}x_i$ l'est, auquel cas on a $\underset{i\in I}{\sum} a.x_i=a.(\underset{i\in I}{\sum}x_i)$), associative et commutative (au même sens que précédemment, avec l'adaptation évidente liée au caractère partiellement défini de l'opération), telle que la somme de la famille réduite à un élément $x$ soit définie et égale à $x$. On note $0$ la somme de la famille vide\,\footnote{Cette somme vide est définie parce que la somme d'une famille réduite à un élément est supposée définie, que l'axiome d'associativité pour la somme contient le fait qu'une sous-famille d'une famille dont la somme est définie a encore une somme bien définie, et que l'ensemble $X$ est nécessairement non vide puisque l'opération $\bigwedge$ est définie sur toutes les familles d'éléments de $X$, y compris la famille vide.}. On suppose également que $x+x$ n'est défini que si $x=0$, et que la somme est {\em régulière} au sens où $x+y=x$ entraîne $y=0$.
\end{enumerate}

On suppose que nos deux opérations vérifient les propriétés de compatibilité suivantes.

(P0) si $(x_i)_{i\in I}$ est une famille d'éléments de $X$ telle que $\underset{i\in I}{\sum}x_i$ est définie, alors, pour toute famille $(J_t)_{t\in T}$ de parties de $I$, on a
$$\underset{t\in T}{\bigwedge}\underset{j\in J_t}{\sum} x_j=\underset{i\in\underset{t\in T}{\bigcap}J_t}{\sum} x_i.$$

(P1) Si $x$ est un élément de $X$ et $(t_i)_{i\in I}$, $(u_j)_{j\in J}$ sont des familles d'éléments de $X$ telles que $x=\underset{i\in I}{\sum} t_i=\underset{j\in J}{\sum} u_j$, alors il y a équivalence entre :
\begin{enumerate}
 \item $x=\underset{(i,j)\in I\times J}{\sum} t_i\wedge u_j$ ; 
 \item pour tout $i\in I$, $t_i=\underset{j\in J}{\sum} t_i\wedge u_j$ ;
 \item pour tout $j\in J$, $u_j=\underset{i\in I}{\sum} t_i\wedge u_j$.
\end{enumerate}
(Noter que 2. ou 3. implique automatiquement 1.)

(P2) Si $u$, $v$ et $t_i$ ($i\in I$) sont des éléments de $X$ tels que  $\underset{i\in I}{\sum}t_i$ est définie, $u=\underset{i\in I}{\sum} u\wedge t_i$ et $v=\underset{i\in I}{\sum} v\wedge t_i$, alors $u\wedge v=\underset{i\in I}{\sum} u\wedge v\wedge t_i$.

On se fixe un élément $s$ de $X$ {\em invariant par $A$} et on note $\B$ l'ensemble des $x\in X$ tels que $\underset{a\in A}{\sum} a.x=s$. On fait enfin l'hypothèse de finitude suivante.

(PF) Si $\underset{i\in I}{\sum} x_i\in\B$, alors $x_i=0$ sauf pour un nombre fini de $i\in I$.

\begin{defi}\label{catdec}
 On appelle {\em catégorie des décompositions} associée aux données précédentes la catégorie, notée $\D$, dont :
 \begin{enumerate}
  \item les objets sont les familles finies $(x_1,\dots,x_n)$ (où $n\in\mathbb{N}$ est quelconque) d'éléments de $X\setminus\{0\}$ telles que $\underset{1\leq i\leq n}{\sum} x_i\in\B$ ;
  \item les morphismes $(x_1,\dots,x_n)\to (y_1,\dots,y_m)$ sont les couples $(\varphi ; a_1,\dots, a_m)$ constitués d'une fonction surjective $\varphi : \mathbf{m}\to\mathbf{n}$ et d'un élément $(a_1,\dots,a_m)$ de $A^m$ tels que
  $$x_i=\underset{\varphi(j)=i}{\sum}a_j.y_j$$
  pour tout $i\in\mathbf{n}$ ;
  \item la composition de
  $$(x_1,\dots,x_n)\xrightarrow{(\varphi ; a_1,\dots, a_m)} (y_1,\dots,y_m)\xrightarrow{(\psi ; b_1,\dots, b_l)}(z_1,\dots,z_l)$$
  est le morphisme
  $$\big(\varphi\circ\psi,(a_{\psi(k)}b_k)_{1\leq k\leq l}\big) : (x_1,\dots,x_n)\to (z_1,\dots,z_l).$$
 \end{enumerate}
\end{defi}

On dispose ainsi d'un foncteur canonique $\D^{op}\to\Omega$, qui sur les objets s'obtient par la longueur des familles finies.

\begin{pr}\label{discr-abstr}
 Si $s\neq 0$, alors la catégorie $\D$ est un ensemble préordonné dont le type d'homotopie est discret.
\end{pr}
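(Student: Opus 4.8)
The plan is to equip the objects of $\D$ with the preorder in which $P\leq Q$ means that a morphism $P\to Q$ exists, to check that this is genuinely a preorder, and then to verify the two hypotheses of Lemme~\ref{contrac-elem}. Reflexivity and transitivity come from the identities and the composition law, and since the surjection $\varphi$ underlying a morphism cannot decrease length, $P\leq Q$ forces the length of $P$ to be at most that of $Q$. The first real point is that there is \emph{at most one} morphism $(x_1,\dots,x_n)\to(y_1,\dots,y_m)$. I would start by noting that for any object $(y_j)$ the elements $a.y_j$ ($a\in A$, $1\leq j\leq m$) are pairwise distinct and nonzero: the sum $\sum_{a,j}a.y_j=s$ is \emph{defined} and lies in $\B$, and a repetition of a nonzero value is forbidden by the regularity hypothesis ``$w+w$ is defined only if $w=0$''. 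Given two morphisms $(\varphi;a_\bullet)$ and $(\varphi';a'_\bullet)$, each writes every $x_i$ as a partial sum of the fixed family of pieces $\{a.y_j\}$ decomposing $\sum_i x_i=:z\in\B$; say $x_i=\sum_{P\in S_i}P=\sum_{P\in S'_i}P$. By (P0) the meet of two partial sums is the partial sum over the intersection of index sets, so intersecting $x_i$ with a single piece $P\in S_i\setminus S'_i$ gives both $P$ (as $P\leq x_i=\sum_{S'_i}$) and $0$ (as $P\notin S'_i$), forcing $P=0$, a contradiction. Hence $S_i=S'_i$ for all $i$, which yields $\varphi=\varphi'$ and $a_j=a'_j$, so $\D$ is a (skeletal) poset.

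Next I would identify the minimal elements. A morphism into a length-one object forces, by surjectivity of $\varphi$, the source to have length one too, and is then invertible; so the minimal elements are exactly the length-one objects $(z)$ with $z\in\B$. Here the hypothesis $s\neq 0$ is crucial: it guarantees $0\notin\B$, so the empty family is not an object and the minimal elements really live in length one (otherwise $\D$ would have a least element and be contractible rather than discrete). Moreover every object $(x_1,\dots,x_n)$ lies above the minimal element $(\sum_i x_i)$ through the coarsening morphism with $\varphi:\mathbf{n}\to\mathbf{1}$ and trivial twists, which establishes the standing hypothesis of Lemme~\ref{contrac-elem} that every element dominates a minimal one. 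It then remains to understand, for a finite nonempty family $z_1,\dots,z_p\in\B$ of minimal elements, the intersection $\bigcap_{l}[(z_l),\to[$ of up-sets.

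The key device is the dictionary identifying an object of $\D$ with a finite, $A$-invariant, free decomposition of $s$ into nonzero pieces (the family $(x_i)$ being a set of $A$-orbit representatives), under which refining $z_l$ means refining the piece-decomposition attached to $z_l$. Using (P0) one checks that two decompositions admitting a common refinement are \emph{compatible}, i.e. each piece of one is the sum of its meets with the pieces of the other, (P1) being what transports this condition between the two sides. Consequently, when $\bigcap_l[(z_l),\to[$ is nonempty the nonzero meets $\bigwedge_l Q_l$ (one piece $Q_l$ from each $z_l$-decomposition) assemble into a well-defined object $M$: it is a coarsening of any common refinement, hence itself valid, and it is the \emph{coarsest} common refinement, since any piece of a common refinement lies below one piece of each $z_l$-decomposition and therefore below their meet. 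Thus $\bigcap_l[(z_l),\to[\,=[M,\to[$ has least element $M$ and is contractible, which is the first condition of the lemma. For the second condition I would prove the Helly-type statement that pairwise compatibility implies global compatibility: for pieces $Q_1,\dots,Q_p$ that are pairwise compatible, (P2) applied to $u=Q_1$, $v=Q_2$ and the $z_3$-decomposition gives $Q_1\wedge Q_2=\sum_{Q_3}Q_1\wedge Q_2\wedge Q_3$, and an induction then yields $Q_1=\sum_{Q_2,\dots,Q_p}Q_1\wedge\cdots\wedge Q_p$; hence pairwise nonempty intersections force $\bigcap_l[(z_l),\to[\,\neq\varnothing$. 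Lemme~\ref{contrac-elem} then gives that $\D$ has discrete homotopy type.

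The hard part will be entirely in the previous paragraph: making the dictionary between objects of $\D$ and free $A$-invariant decompositions of $s$ precise, and squeezing out of (P0), (P1), (P2) and regularity both the existence of the coarsest common refinement $M$ as a genuinely \emph{valid} object (nonzero pieces, freeness of the $A$-action, total sum in $\B$, finiteness) and the Helly property. The geometric content behind these axioms---that pairwise-compatible free-product decompositions of an $A$-group share a common refinement---is exactly what the Nielsen reduction theory of the subsection~\ref{pnil} (Théorème~\ref{th-niel}) supplies in the intended application, and which is here isolated in the abstract hypotheses (P0)--(P2); by contrast, the preorder property and the description of the minimal elements are purely formal consequences of (P0) together with the regularity of the sum.
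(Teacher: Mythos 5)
Your proposal is correct and follows essentially the same route as the paper's own proof: uniqueness of morphisms from the non-repetition/regularity hypotheses, identification of the minimal elements with the length-one objects (i.e.\ elements of $\B$), then the content of the paper's Lemme~\ref{lm-intersect} — nonemptiness of $\underset{l}{\bigcap}[(z_l),\to[$ is equivalent to pairwise compatibility (via (P0) and (P1)), (P2) iterated yields the Helly property, and a nonempty intersection equals $[M,\to[$ for the object $M$ assembled from the nonzero meets — before concluding with Lemme~\ref{contrac-elem}. Only your parenthetical aside is inaccurate: if $s=0$ the empty family would be an \emph{isolated} object of $\D$ (no surjection $\mathbf{m}\to\mathbf{0}$ exists for $m>0$), not a least element; this is harmless since $s\neq 0$ is assumed.
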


\begin{proof}
 On commence par vérifier que $\D$ est un ensemble préordonné, c'est-à-dire qu'il y a au plus un morphisme entre deux de ses objets. Supposons en effet que
 $(\varphi;a_1,\dots,a_m)$ et $(\varphi';a'_1,\dots,a'_m)$ sont des éléments de $\D\big((x_1,\dots,x_n),(y_1,\dots,y_m)\big)$. Par définition de $\D$ et $\B$, la somme $\underset{i\in\mathbf{n}}{\underset{a\in A}{\sum}}a.x_i$ est définie. Soit $j\in\mathbf{m}$ ; posons $i:=\varphi(j)$ et $i':=\varphi'(j)$. Si $i\neq i'$ ou $a_j\neq a'_j$, la somme $x_i+a_j {a'_j}^{-1}.x_{i'}$ est définie. Cela implique, puisque $x_i$ (resp. $x_{i'}$) est la somme de $a_j.y_j$ (resp. $a'_j.y_j$) et d'autres termes, que $a_j.y_j+a_j.y_j$, donc $y_j+y_j$, est définie, donc, par hypothèse, que $y_j=0$, ce qui contredit la définition des objets de $\D$ et établit que $\varphi'(j)=\varphi(j)$ et $a_j=a'_j$ pour tout $j$.
 
 Avant de terminer la démonstration, on donne un résultat intermédiaire qui nous permettra d'appliquer le lemme~\ref{contrac-elem}.
\end{proof}

Dans ce qui suit, on note de la même façon un élément de $\B$ et l'objet de $\D$ (qui est donc une famille à un objet) qu'il définit. Noter qu'on obtient ainsi exactement tous les élément {\em minimaux} de l'ensemble préordonné $\D$.

\begin{lm}\label{lm-intersect}
 Soit $\mathfrak{X}$ une partie finie non vide de $\B$. Les assertions suivantes sont équivalentes.
 \begin{enumerate}
  \item\label{le1} Le sous-ensemble $\underset{x\in\mathfrak{X}}{\bigcap} [x,\to[$ de $\D$ est non vide ;
  \item\label{le3} pour toute partie non vide $\mathfrak{X}'$ de $\mathfrak{X}$, on a $s=\sum\underset{x\in \mathfrak{X}'}{\bigwedge}a(x).x$, où la somme est prise sur toutes les fonctions $a : \mathfrak{X}'\to A$ ; 
  \item\label{le4} pour tous éléments $x$ et $y$ de $\mathfrak{X}$, on a $y=\underset{a\in A}{\sum}y\wedge a.x$ ;
 \end{enumerate}
 
De plus, lorsqu'elles sont vérifiées, il existe un élément $T$ de $\D$ tel que $\underset{x\in\mathfrak{X}}{\bigcap} [x,\to[ = [T,\to[$.
\end{lm}

\begin{proof}
Supposons {\em \ref{le1}.} vérifié. Il suffit de montrer {\em \ref{le3}} pour $\mathfrak{X}'=\mathfrak{X}$, puisqu'on a a fortiori $\underset{x\in\mathfrak{X}'}{\bigcap} [x,\to[\neq\emptyset$, ce qu'on suppose désormais.

 Soit $Y=(y_1,\dots,y_n)$ un élément de l'intersection $\underset{x\in\mathfrak{X}}{\bigcap} [x,\to[_\D$. Pour tout $x\in\mathfrak{X}$, la relation $x\leq Y$ signifie qu'il existe des éléments $\alpha_i(x)$ de $A$ tels que $x=\underset{i\in\mathbf{n}}{\sum}\alpha_i(x).y_i$. Comme la somme $\underset{(i,a)\in\mathbf{n}\times A}{\sum} a.y_i$ est définie, l'hypothèse (P0) appliquée aux parties $J_x^a(Y):=\{(i,a(x)\alpha_i(x))\,|\,i\in\mathbf{n}\}$ (notées simplement $J_x^a$ s'il n'y a pas d'ambiguïté) de $\mathbf{n}\times A$, pour $x\in\mathfrak{X}$, où $a\in A^\mathfrak{X}$, montre que
 $$\underset{x\in \mathfrak{X}}{\bigwedge}a(x).x=\underset{x\in \mathfrak{X}}{\bigwedge}\underset{i\in\mathbf{n}}{\sum}a(x)\alpha_i(x).y_i=\underset{(i,c_i)\in\underset{x\in\mathfrak{X}}{\bigcap}J_x^a}{\sum}c_i.y_i\;;$$
 $\underset{x\in\mathfrak{X}}{\bigcap}J_x^a$ est constituée des $(i,c_i)$ pour lesquels la fonction $\mathfrak{X}\to A\quad x\mapsto a(x)\alpha_i(x)$ est constante en $c_i$. Étant donné un élément $(i,c_i)$ de $\mathbf{n}\times A$, il existe un et un seul élément $a$ de $A^\mathfrak{X}$ tel que $(i,c_i)$ appartienne à $\underset{x\in\mathfrak{X}}{\bigcap}J_x^a$, qui est donné par $a(x)=c_i\alpha_i(x)^{-1}$. On en déduit
 $$\underset{a\in A^\mathfrak{X}}{\sum}\underset{x\in \mathfrak{X}}{\bigwedge}a(x).x=\underset{(i,a)\in\mathbf{n}\times A}{\sum}a.y_i=s\,,$$
 de sorte que {\em \ref{le1}} implique {\em \ref{le3}}. 
 
 Montrons également la dernière assertion : considérons un élément $T=(t_1,\dots,t_r)$ de longueur $r$ {\em minimale} dans $\underset{x\in\mathfrak{X}}{\bigcap} [x,\to[_\D$. Cela implique que $\underset{x\in\mathfrak{X}}{\bigcap}J_x^a(T)$ possède au plus un élément --- si $(i,c_i)$ et $(j,c_j)$ appartiennent à cet ensemble, on a $\alpha_i=\alpha_j$ (avec les notations précédentes) : si $i\neq j$, on peut alors remplacer $t_i$ et $t_j$ par $t_i+t_j$ dans $T$ pour obtenir une famille $T'$ de longueur $n-1$ qui sera encore supérieure à tous les éléments de $x$. Par conséquent, chaque terme $t_i$ de $T$ peut s'écrire sous la forme $\underset{x\in \mathfrak{X}}{\bigwedge}a(x).x$ pour un $a\in A^\mathfrak{X}$. Si $Y=(y_1,\dots,y_n)$ est un élément de $\underset{x\in\mathfrak{X}}{\bigcap} [x,\to[$, ce qui précède donne des éléments $c_j$ de $A$ et des sous-ensembles $E_i$, pour $i\in\mathbf{r}$, de $\mathbf{n}$ tels que $t_i=\underset{j\in E_i}{\sum}c_j.y_j$. Les $E_i$ forment nécessairement une partition de $\mathbf{n}$ : ils sont non vides car $t_i\neq 0$, disjoints parce que la somme $\underset{(i,a)\in\mathbf{r}\times A}{\sum}a.t_i$ est définie (utiliser, comme dans le début de la démonstration de la proposition~\ref{discr-abstr}, le fait que $x+x$ n'est pas défini pour $x\neq 0$), et ils recouvrent $\mathbf{r}$ en raison de la relation
 $$s=\underset{(i,a)\in\mathbf{r}\times A}{\sum}a.t_i=\underset{(j,a)\in\underset{i\in\mathbf{r}}{\cup} E_i\times A}{\sum}ac_j.y_j=\underset{(j,b)\in\mathbf{r}\times A}{\sum}b.y_i$$
 et de la régularité de la somme.
 
 Montrons que {\em \ref{le3}} implique {\em \ref{le4}} : si l'on applique l'hypothèse {\em \ref{le3}} à $\mathfrak{X}'=\{x,y\}$, on obtient
 $$s=\underset{(a,b)\in A^2}{\sum} a.x\wedge b.y=\underset{a\in A}{\sum}a.x= \underset{b\in A}{\sum}b.y\,,$$
 de sorte que l'hypothèse (P1) donne la conclusion.
 
  {\em \ref{le4}} $\Rightarrow$ {\em \ref{le1}} : l'application itérative de la propriété (P2) montre que, sous l'hypothèse {\em \ref{le4}}, si $n\geq 1$ est un entier, $(x_0,\dots,x_n)$ une famille d'éléments de $\mathfrak{X}$ et $(a_1,\dots,a_n)$ une famille d'éléments de $A$, on a
  $$a_1.x_1\wedge\dots\wedge a_n.x_n=\underset{a_0\in A}{\sum}a_0.x_0\wedge a_1.x_1\wedge\dots\wedge a_n.x_n.$$
  
  On en déduit également (encore par récurrence sur $n$) que
  \begin{equation}\label{eqx1}
x_0=\underset{(a_1,\dots,a_n)\in A^n}{\sum}x_0\wedge a_1.x_1\wedge\dots\wedge a_n.x_n   
  \end{equation}
 
 Fixons-nous un élément $y$ de $\mathfrak{X}$ et considérons la famille (ordonnée arbitrairement) des éléments {\em non nuls} de $X$ de la forme $\underset{x\in \mathfrak{X}}{\bigwedge}a(x).x$, où $a$ parcourt l'ensemble des fonctions $\mathfrak{X}\to A$ telles que $a(y)=1$. La somme de cette famille égale $y\in\B$ (par \eqref{eqx1}), elle est donc finie grâce à l'hypothèse (PF), et définit un élément $T$ de $\D$. La relation \eqref{eqx1} montre également que, pour tout $t\in\mathfrak{X}$, on a
 $$t=\underset{a(y)=1}{ \underset{a\in A^\mathfrak{X}}{\sum}}a(t)^{-1}.\underset{x\in\mathfrak{X}}{\bigwedge}a(x).x$$
 d'où l'on déduit $x\leq T$ (dans $\D$). Ainsi $T$ appartient à $\underset{x\in\mathfrak{X}}{\bigcap} [x,\to[$, ce qui achève la démonstration.
\end{proof}

\begin{proof}[Fin de la démonstration de la proposition~\ref{discr-abstr}]
 Dans l'ensemble préordonné $\D$, tout élément $(x_1,\dots,x_n)$ est supérieur à l'élément (de longueur $1$) $x_1+\dots+x_n$, qui est minimal (en effet, on a supposé $s\neq 0$). La dernière assertion du lemme~\ref{lm-intersect} montre que l'intersection d'une famille finie non vide de sous-ensembles de $\D$ du type $\underset{x\in\mathfrak{X}}{\bigcap}[x,\to[$, où les éléments de $\mathfrak{X}$ sont {\em minimaux} dans $\D$, c'est-à-dire appartiennent à $\B$, est vide ou contractile. L'équivalence {\em \ref{le1}}\,$\Leftrightarrow$ {\em \ref{le4}} de ce même lemme montre que la dernière condition du lemme~\ref{contrac-elem} est satisfaite, de sorte que celui-ci donne la conclusion recherchée.
\end{proof}

\subsection{Application : la catégorie $\mathfrak{D}_A(G)$ des décompositions d'un $A$-groupe libre $G$}\label{pco}

Soient $G$ un groupe et $X$ l'ensemble des sous-groupes de $G$. On va appliquer les résultats du paragraphe~\ref{scritabst} avec pour opération partiellement définie de <<~somme~>> le produit libre interne (noté $\bigstar$), l'opération (partout définie) $\bigwedge$ étant l'intersection. Ces opérations sont bien associatives et commutatives ; le produit libre interne est une opération régulière ($H*K=H$ implique $K=\{1\}$, où $H$ et $K$ sont des sous-groupes de $G$) et le produit libre interne d'un sous-groupe $H$ de $G$ avec lui-même n'est défini que si $H$ est trivial.

Vérifions les propriétés (P0), (P1) et (P2).

(P0) : soient $(H_i)_{i\in I}$ une famille de sous-groupes de $G$ et $(J_t)_{t\in T}$ une famille de parties de $I$. On a toujours
$$\underset{i\in\underset{t\in T}{\bigcap}J_t}{\bigstar}H_i\subset\underset{t\in T}{\bigcap}\,\underset{j\in J_t}{\bigstar}H_j\;;$$
l'inclusion inverse est vraie lorsque $\underset{i\in I}{\bigstar}H_i$ est défini, comme on le voit en examinant l'unique écriture réduite dans cette décomposition en produit libre d'un élément du terme de droite.

(P1) : si $(H_i)_{i\in I}$ et $(K_j)_{j\in J}$ sont des familles de sous-groupes de $G$ telles que
$$\underset{i\in I}{\bigstar}T_i=\underset{j\in J}{\bigstar}U_j=\underset{(i,j)\in I\times J}{\bigstar} T_i\cap U_j\,,$$
considérons la décomposition réduite dans le produit libre de droite d'un élément de $H_t$, pour un $t\in I$ fixé. Cette décomposition ne fait intervenir que des éléments d'intersections de la forme $T_t\cap U_j$ car, quitte à regrouper certains termes (les termes consécutifs pour lesquels la valeur de l'indice $i$ est constante), elle fournit une décomposition de notre élément de $H_t$ dans le produit libre $\underset{i\in I}{\bigstar}T_i$, puisque $T_i\cap U_j\subset T_i$. On a donc $T_t=\underset{j\in J}{\bigstar} T_t\cap U_j$ comme souhaité.

(P2) : si $H$, $K$ et les $L_i$ sont des sous-groupes de $G$ tels que $\underset{i\in I}{\bigstar} L_i$ est défini, la relation $H=\underset{i\in I}{\bigstar} H\cap L_i$ signifie exactement que $H\subset\underset{i\in I}{\bigstar} L_i$ et que, dans l'écriture réduite d'un élément de $H$ dans cette décomposition en produit libre, tous les termes appartiennent à $H$. Si l'on fait la même hypothèse pour $K$, on voit que, dans l'écriture réduite d'un élément de $H\cap K$ dans le produit libre $\underset{i\in I}{\bigstar} L_i$, tous les termes appartiennent à $H$, et à $K$, donc à $H\cap K$ comme souhaité.

Supposons maintenant que $G$ est muni d'une action d'un groupe $A$ : l'action induite sur l'ensemble $X$ des sous-groupes de $G$ est compatible aux opérations d'intersection et de produit libre interne. Si l'on prend pour $s\in X$ le sous-groupe $G$ de $G$, cet élément est invariant par l'action de $A$, et l'ensemble noté $\B$ dans le paragraphe précédent est exactement l'ensemble $\mathfrak{B}_A(G)$ introduit dans la définition~\ref{base-agl}. Si $G$ est de type fini {\em comme $A$-groupe}, tous les éléments de cet ensemble sont de type fini, de sorte qu'un produit libre interne de sous-groupes de $G$ ne peut lui appartenir que s'ils sont tous triviaux, sauf un nombre fini d'entre eux : l'hypothèse (PF) est bien vérifiée.

\begin{nota}\label{ncdc}
 Soient $A$ un groupe et $G$ un $A$-groupe de type fini. On note $\mathfrak{D}_A(G)$ la catégorie des décompositions (notée $\D$ au paragraphe~\ref{scritabst}) associée aux données précédentes.
\end{nota}

\begin{thm}\label{th-ccon}
 Soient $A$ un groupe et $G$ un $A$-groupe libre de type fini. La catégorie $\mathfrak{D}_A(G)$ est un ensemble préordonné dont le nerf est contractile.
\end{thm}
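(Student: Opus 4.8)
Le plan est de combiner la proposition~\ref{discr-abstr}, qui fournit déjà la structure d'ensemble préordonné et le caractère discret du type d'homotopie de $\mathfrak{D}_A(G)$, avec un argument de \emph{connexité} reposant sur le théorème~\ref{th-niel}. Comme un ensemble simplicial dont le type d'homotopie est discret est contractile si et seulement s'il est connexe, il suffira d'établir que $\N(\mathfrak{D}_A(G))$ est connexe. (Le cas où $G$ est le $A$-groupe trivial, exclu par l'hypothèse $s\neq 0$ de la proposition~\ref{discr-abstr}, se traite à part : $\mathfrak{D}_A(G)$ se réduit alors à la famille vide, donc à un point.)

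On commence par réduire la connexité du nerf à celle des éléments minimaux. Tout objet $(x_1,\dots,x_n)$ de $\mathfrak{D}_A(G)$ est supérieur à l'élément minimal $x_1*\cdots*x_n$ de $\mathfrak{B}_A(G)$ (cf. la fin de la démonstration de la proposition~\ref{discr-abstr}), donc lui est relié dans le nerf ; il suffit par conséquent de montrer que deux sous-groupes quelconques $H$ et $K$ de $\mathfrak{B}_A(G)$ appartiennent à la même composante connexe. On choisira pour cela des $A$-bases de $H$ et de $K$ : par le théorème~\ref{th-niel}, elles sont reliées par une suite de $A$-transformations de Nielsen élémentaires, dont chacune préserve l'appartenance à $\mathfrak{B}_A(G)$ (pour la torsion $e_1\mapsto{}^a\!e_1$, cela provient de ce que l'application $b\mapsto ba$ est une bijection de $A$, de sorte que $\{{}^b\!e_i\}_{b\in A,\,i}$ et son transformé engendrent librement le même groupe $G$).

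Le cœur de l'argument --- et l'obstacle principal --- est alors de relier, dans le nerf, les sous-groupes associés à deux $A$-bases différant d'une unique transformation élémentaire. Les inversions et les multiplications ne modifient pas le sous-groupe engendré, donnant le même élément minimal ; pour la torsion, si $H=\langle e_1\rangle *\langle e_2,\dots,e_n\rangle$ et $H'=\langle {}^a\!e_1\rangle *\langle e_2,\dots,e_n\rangle$, on introduit l'objet $T=({}^a\!\langle e_1\rangle,\langle e_2,\dots,e_n\rangle)$ de $\mathfrak{D}_A(G)$ et l'on constate que $T\geq H$ (via le couple $(a^{-1},1)\in A^2$) et $T\geq H'$ (via $(1,1)$), d'où le zigzag $H\leq T\geq H'$ recherché. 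En concaténant ces zigzags le long de la suite de transformations de Nielsen reliant les bases de $H$ et de $K$, on obtient que $H$ et $K$ sont dans la même composante connexe ; ainsi $\N(\mathfrak{D}_A(G))$ est connexe, ce qui, joint au caractère discret de son type d'homotopie, établit sa contractibilité.
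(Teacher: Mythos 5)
Votre démonstration est correcte et suit essentiellement la même voie que celle de l'article : réduction à la connexité du nerf via la proposition~\ref{discr-abstr}, utilisation du théorème~\ref{th-niel}, puis traitement des transformations de Nielsen élémentaires en observant que seules les torsions $e_i\mapsto{}^a\!e_i$ changent le sous-groupe engendré, reliées par un zigzag passant par un objet de longueur~$2$. Votre majorant $({}^a\!\langle e_1\rangle,\langle e_2,\dots,e_n\rangle)$ ne diffère de celui de l'article, $(\langle e_t\,:\,t\neq i\rangle,\langle e_i\rangle)$, que par un choix anodin des éléments de $A$ dans les morphismes.
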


\begin{proof}
 Le cas où $G$ est le $A$-groupe trivial est immédiat, car $\mathfrak{D}_A(G)$ est alors réduite à la famille vide.
 
Si $G$ n'est pas trivial, d'après la proposition~\ref{discr-abstr}, il suffit de vérifier que $\mathfrak{D}_A(G)$ est connexe (cette catégorie est non vide puisque $G$ est $A$-libre de type fini). Cela découle du théorème~\ref{th-niel}, car toute transformation de Nielsen entre deux éléments de $\mathfrak{B}_A(G)$ (dont on choisit des bases pour le produit libre) fournit un chemin entre eux dans $\mathfrak{D}_A(G)$. Supposons en effet que deux éléments $H$ et $K$ de $\mathfrak{B}_A(G)$ sont reliés par une transformation de Nielsen {\em élémentaire} : cela signifie qu'il existe une base $(e_1,\dots,e_n)$ de $H$, un indice $i\in\mathbf{n}$ et un élément $a$ de $A$ tels que $(e_1,\dots,e_{i-1},\,^a\!e_i,e_{i+1},\dots,e_n)$ soit une base de $K$ (les transformations de Nielsen élémentaires sur une base de $H$ autres que l'application d'un élément de $A$ sur l'un d'entre eux ne changent pas le sous-groupe engendré, on peut donc les négliger). On a alors, dans $\mathfrak{D}_A(G)$, $H\leq (T,U)$ et $K\leq (T,U)$, où $T$ (resp. $U$) désigne le sous-groupe de $G$ engendré par les $e_t$ pour $t\neq i$ (resp. par $e_i$), d'où le théorème.
\end{proof}

\section{Type d'homotopie de la catégorie $\cc(A)$}\label{sth}

\subsection{La catégorie $\G_A$ et le foncteur faible $\G_A\to\mathbf{Cat}$}
Le groupe $A$ étant (provisoirement) fixé, nous avons besoin d'une fonctorialité de l'association $G\mapsto\mathfrak{D}_A(G)$. La catégorie usuelle des $A$-groupes libres de rang fini $\mathbf{gr}_A$ n'est pas adaptée à cela ; nous en modifierons donc les flèches (de façon analogue à celle dont on obtient $\G$ à partir de $\gr$).

\begin{nota}\label{not-ga}
 On désigne par $\G_A$ la catégorie ayant les mêmes objets que $\mathbf{gr}_A$ et dont les morphismes $G\to H$ sont les couples $(u,T)$ constitués d'un monomorphisme de $A$-groupes $u : G\to H$ et d'un sous-groupe $T$ de $H$ (qu'on ne suppose pas stable par l'action de $A$) tel que $H$ soit le produit libre interne de l'image de $u$ et des $^a\!T$ pour $a\in A$.
 
 La composée $G\xrightarrow{(u,T)}H\xrightarrow{(v,S)}K$ est le couple constitué du monomorphisme $v\circ u : G\to K$ de $A$-groupes et du sous-groupe $S*v(T)$ de $K$.
\end{nota}

On note d'abord qu'on dispose d'un foncteur $\mathfrak{B}_A : \G_A\to\mathbf{Ens}$ (donné sur les objets par la définition~\ref{base-agl}) dont l'effet sur les morphismes est le suivant. Si $(u,T) : G\to H$ est un morphisme de $\G_A$, $\mathfrak{B}_A(u,T)$ associe à un sous-groupe $K$ de $G$ appartenant à $\mathfrak{B}_A(G)$ le sous-groupe $u(K)*T$ de $H$, qui appartient manifestement à $\mathfrak{B}_A(H)$.

Si $\C$ est une catégorie, on rappelle qu'un {\em foncteur faible} (la terminologie anglaise usuelle est {\em op-lax functor} \cite[§\,3]{T79}) $F : \C\to\mathbf{Cat}$ consiste en la donnée, pour tout objet $c$ de $\C$, d'une petite catégorie $F(c)$, d'un foncteur $F(\phi) : F(c)\to F(d)$ pour tout $\phi\in\C(c,d)$ et de transformations naturelles $F({\rm Id}_c)\Rightarrow {\rm Id}_{F(c)}$ pour tout $c\in {\rm Ob}\,\C$ et $F(\phi\circ\psi)\Rightarrow F(\phi)\circ F(\psi)$ pour tout couple $(\phi,\psi)$ de flèches composables de $\C$, assujettis à vérifier les conditions de cohérence usuelles (qu'on pourra trouver dans \cite[3.1.1]{T79}).

\begin{pr}\label{pr-ffbl} $\mathfrak{D}_A : \G_A\to\mathbf{Cat}$ définit un foncteur faible.
\end{pr}

\begin{proof} Soit $(u,T) : G\to H$ un morphisme de $\G_A$. Si $T$ est le groupe trivial (c'est-à-dire que $u$, ou $(u,T)$, est un isomorphisme), $\mathfrak{D}_A(u,T)$ associe à un objet $(K_1,\dots,K_n)$ de $\mathfrak{D}_A(G)$ l'objet $(u(K_1),\dots,u(K_n))$ de $\mathfrak{D}_A(H)$ et a l'effet évident sur les morphismes (qui restent inchangés). Lorsque $T$ est non trivial, $\mathfrak{D}_A(u,T)$ associe à  $(K_1,\dots,K_n)\in\mathfrak{D}_A(G)$ l'objet $(u(K_1),\dots,u(K_n),T)$ de $\mathfrak{D}_A(H)$. Si $(\varphi;a_1,\dots,a_m) : (K_1,\dots,K_n)\to (K'_1,\dots,K'_m)$ est un morphisme de $\mathfrak{D}_A(G)$ (on a donc $\varphi\in\Omega(\mathbf{m},\mathbf{n})$ et $a_i\in A$ pour tout $i\in\mathbf{m}$), $\mathfrak{D}_A(u,T)$ lui associe le morphisme $(\varphi_+;a_1,\dots,a_m,1) : (u(K_1),\dots,u(K_n),T)\to (u(K'_1),\dots,u(K'_m),T)$ de $\mathfrak{D}_A(H)$, où $\varphi_+\in\Omega(\mathbf{m+1},\mathbf{n+1})$ est la fonction coïncidant avec $\varphi$ sur $\mathbf{m}$ et envoyant $m+1$ sur $n+1$. La compatibilité de $\mathfrak{D}_A(u,T)$ à la composition (et aux identités) est automatique puisque les catégories $\mathfrak{D}_A(G)$ et $\mathfrak{D}_A(H)$ sont des ensembles préordonnés.

De plus, $\mathfrak{D}_A$ envoie les identités sur les identités, et l'on a $\mathfrak{D}_A({\rm v}\circ {\rm u})\leq\mathfrak{D}_A({\rm v})\circ\mathfrak{D}_A({\rm u})$ lorsque u et v sont des flèches composables de $\G_A$ --- avec égalité si et seulement si u ou v est un isomorphisme. En effet, si par exemple ni ${\rm u}=(u,T)$ ni ${\rm v}=(v,S)$ ne sont des isomorphismes, on a
\[\mathfrak{D}_A({\rm v}\circ {\rm u})(K_1,\dots,K_n)=(vu(K_1),\dots,vu(K_n),S*v(T))\]
tandis que
\[\big(\mathfrak{D}_A({\rm v})\circ\mathfrak{D}_A({\rm u})\big)(K_1,\dots,K_n)=(vu(K_1),\dots,vu(K_n),v(T),S).\]
Cela montre que $\mathfrak{D}_A$ est bien un foncteur faible --- comme $\mathfrak{D}_A$ prend ses valeurs dans les ensembles préordonnés, les conditions de cohérence sont automatiques.
\end{proof}

\subsection{Le foncteur $\kappa : \cc(A)\to\G_A$}\label{skap}
On rappelle que, lorsque $A$ est un groupe libre de type fini, $\cc(A)$ désigne la catégorie d'éléments $\G_{\gamma^*\scg(A,-)}$ (cf. section~\ref{sstr}).

Soit $\underline{G}:=(G,A\xrightarrow{i}G\xrightarrow{p}A)$ un objet de $\cc(A)$, où $G$ est un objet de $\G$ et $i$, $p$ sont des morphismes de groupes tels que $p\circ i={\rm Id}_A$ et qu'il existe un sous-groupe $T$ de $G$ et un diagramme commutatif
 \[\xymatrix{A\ar[r]^i\ar[rd] & G\ar[r]^p\ar[d]^\simeq & A\\
 & A*T\ar[ru] &
 }\]
 (où les flèches obliques sont l'inclusion et la projection canoniques). On note $\kappa(\underline{G})$ le sous-groupe ${\rm Ker}\,p$ de $G$, qu'on munit de l'action de $A$ donnée par $^a\!x=i(a).x.i(a)^{-1}$. Le diagramme commutatif précédent détermine un isomorphisme $\mathcal{L}_A(T)\simeq\kappa(\underline{G})$ de $A$-groupes (mais cet isomorphisme n'est pas canonique), de sorte que $\kappa(\underline{G})$ est un $A$-groupe libre de type fini.
 
 Soit $(u,T) : \underline{G}:=(G,A\xrightarrow{i}G\xrightarrow{p}A)\to\underline{H}:=(H,A\xrightarrow{j}H\xrightarrow{q}A)$ un morphisme de $\cc(A)$. Autrement dit, $(u,T) : G\to H$ est un morphisme de $\G$ tel que les diagrammes
 \[\xymatrix{A\ar[r]^i\ar[rd]_j & G\ar[d]^u\\
 & H
 }$$
 et
 $$\xymatrix{G\ar[r]^p & A\\
 H=u(G)*T\ar@{->>}[u]\ar[ur]_q &
 }\]
 commutent. Le monomorphisme de groupes $u : G\to H$ induit un monomorphisme de groupes $A$-équivariant $\bar{u} : \kappa(\underline{G})\to\kappa(\underline{H})$ ; de plus, $\kappa(\underline{H})$ est le produit libre interne de l'image de $\bar{u}$ et des $^a\!T=aTa^{-1}$ pour $a\in A$. Ainsi, $(\bar{u},T)$ définit un morphisme $\kappa(u,T) : \kappa(\underline{G})\to\kappa(\underline{H})$. On obtient de la sorte un foncteur $\kappa : \cc(A)\to\G_A$.
 
 Par conséquent, grâce à la proposition~\ref{pr-ffbl}, on dispose d'un foncteur faible $\kappa^*\mathfrak{D}_A : \cc(A)\to\mathbf{Cat}$.

\subsection{Constructions de Grothendieck}\label{sus-cg}
Soient $\C$ une catégorie et $\Phi : \C\to\mathbf{Cat}$ un foncteur faible. On rappelle que la {\em construction de Grothendieck} $\C\int\Phi$ est la catégorie dont les objets sont les couples $(c,x)$ constitués d'un objet $c$ de $\C$ et d'un objet $x$ de $\Phi(x)$, les morphismes $(c,x)\to (d,y)$ étant les couples constitués d'un morphisme $f : c\to d$ de $\C$ et d'un morphisme $u : \Phi(f)(x)\to y$ de $\Phi(d)$, la composition étant définie de la façon usuelle \cite[3.1.2]{T79}. Si $\Psi : \C^{op}\to\mathbf{Cat}$ est un foncteur faible, on notera $\Psi\int\C$ la catégorie $(\C^{op}\int\Psi)^{op}$.

On définit de manière évidente la catégorie $\Psi\int\C\int\Phi$, lorsque $\Phi : \C\to\mathbf{Cat}$ et $\Psi : \C^{op}\to\mathbf{Cat}$ sont des foncteurs faibles, catégorie dont les objets sont les triplets $(c,x,t)$ constitués d'un objet $c$ de $\C$, d'un objet $x$ de $\Phi(c)$ et d'un objet $t$ de $\Psi(x)$, qu'on peut voir indifféremment comme $\Psi\int (\C\int\Phi)$ (avec l'abus consistant à noter encore $\Psi$ la composée de ce foncteur avec le foncteur canonique $\C\int\Phi\to\C$) ou comme $(\Psi\int\C)\int\Phi$ (avec un abus similaire pour $\Phi$).

Si $\T$ est une petite catégorie, on dispose d'un foncteur (strict) $\mathbf{ens}^{op}\to\mathbf{Cat}$ associant à l'ensemble $E$ la catégorie $\T^E:=\fct(E,\T)$ (où $E$ est vu comme une catégorie discrète). Si $\T$ est munie d'un objet $t$, on a même un foncteur\,\footnote{On rappelle que les catégories $\Gamma$ et $\Pi$ qui sont utilisées abondamment dans la suite ont été introduites au début de la section~\ref{sect-rappels}.} $\Gamma^{op}\to\mathbf{Cat}$ associant à l'ensemble $E$ la catégorie $\T^E$ : si $f : X\to Y$ est une fonction partiellement définie, le foncteur $\T^Y\to\T^X$ envoie $(c_y)_{y\in Y}$ sur la famille $(c'_x)_{x\in X}$ où $c'_x=c_{f(x)}$ si $x\in {\rm Def}(f)$ et $c'_x=t$ sinon, avec l'effet évident sur les morphismes. On notera $\eta(\T,t) : \Pi^{op}\to\mathbf{Cat}$, ou simplement $\eta(\T)$ si le choix de $t$ est clair, la restriction à $\Pi^{op}$ du foncteur précédent. Les cas qui nous intéresseront seront ceux où $\T$ est un groupe (vu comme catégorie à un objet) ou un ensemble (vu comme catégorie discrète) pointé.

Si $(\C,+,0)$ est une petite catégorie monoïdale symétrique, on dispose d'un foncteur faible $\Gamma\to\mathbf{Cat}$ associant à un ensemble fini $X$ la catégorie $\C^X$ et à un morphisme $f : X\to Y$ de $\Gamma$ le foncteur $\C^X\to\C^Y$ dont la composante $\C^X\to\C$ indicée par $y\in Y$ est donnée par la composée du foncteur de projection $\C^X\twoheadrightarrow\C^{f^{-1}(y)}$ et du foncteur $\C^{f^{-1}(y)}\to\C$ obtenu par la somme (au sens de la structure monoïdale $+$) itérée (c'est la définition~4.1.2 de Thomason \cite{T79}). Si $(\C,+)$ est une catégorie monoïdale symétrique {\em sans unité}, on dispose d'une variante du foncteur précédent, notée $\varepsilon(\C,+)$, définie de la même façon, mais seulement sur $\Pi$. Autrement dit, ce foncteur associe $\C^X$ à $X$, et à la surjection partiellement définie $f : X\to Y$ le foncteur donné (sur les objets) par
$$(c_x)_{x\in X}\mapsto\Big(\underset{f(x)=y}{\sum}c_x\Big)_{y\in Y}.$$

\begin{nota}
Dans ce qui suit, $gr$ désigne le {\em groupoïde} sous-jacent à $\mathbf{gr}$ et $gr'$ désigne la sous-catégorie pleine de $gr$ des groupes libres de rang fini strictement positif. Munie du produit libre, c'est donc une catégorie monoïdale symétrique sans unité.
\end{nota}

La vérification de la propriété suivante, qui est un jeu formel d'écriture entre produits libres internes et externes, est immédiate.

\begin{pr}\label{pr-egro}
Soit $A$ un groupe libre de rang fini. Il existe une équivalence de catégories
$$\cc(A)\int\kappa^*\mathfrak{D}_A\simeq\varepsilon(gr',*)\int\Pi^{op}\int\eta(A).$$
Plus précisément, les deux foncteurs définis ci-après sont des équivalences quasi-inverses l'une de l'autre.
 \begin{itemize}
  \item Le foncteur $\cc(A)\int\kappa^*\mathfrak{D}_A\to\varepsilon(gr',*)\int\Pi^{op}\int\eta(A)$ associe à un objet $(\underline{G},H_1,\dots,H_r)$ l'objet $(\mathbf{r},H_1,\dots,H_r)$ ;
  \item si $(u,T) : \underline{G}\to\underline{G}'$ est un morphisme de $\cc(A)$ et
  $$(\varphi;a_1,\dots,a_s) : (H_1,\dots,H_r,T)\to (K_1,\dots,K_s)\qquad\text{(pour }T\neq\{1\})$$
  $$\text{ou}\qquad (\varphi;a_1,\dots,a_s) : (H_1,\dots,H_r)\to (K_1,\dots,K_s)\qquad\text{(pour }T=\{1\})$$
un morphisme de $\mathfrak{D}_A(\kappa(\underline{G}'))$, où $\varphi : \mathbf{s}\to\mathbf{r+1}$ (ou $\varphi : \mathbf{s}\to\mathbf{r}$ si $T=\{1\}$) est un morphisme de $\Omega$ et les $a_j$ sont des éléments de $A$, de sorte que $u(H_i)=\underset{\varphi(j)=i}{\bigstar}^{a_j}\!K_j$, et $T=\underset{\varphi(j)=r+1}{\bigstar}^{a_j}\!K_j$ si $T\neq\{1\}$, le morphisme $(\mathbf{s},K_1,\dots,K_s)\to (\mathbf{r},H_1,\dots,H_r)$ associé de $\varepsilon(gr',*)\int\Pi^{op}\int\eta(A)$ est donné par le morphisme $\mathbf{s}\to\mathbf{r}$ de $\Pi$ défini sur le complémentaire de la préimage de $r+1$ par $\varphi$ et coïncidant dessus avec $\varphi$  (pour $T\neq\{1\}$ ; c'est simplement $\varphi$ si $T=\{1\}$), la famille $(a_j)_{1\leq j\leq s}$ d'éléments de $A$ et la collection d'isomorphismes
  $$H_i\simeq u(H_i)=\underset{\varphi(j)=i}{\bigstar}^{a_j}\!K_j\simeq\underset{\varphi(j)=i}{\bigstar}K_j$$
  (le premier produit libre est interne, le second externe) et, si $T\neq\{1\}$, 
  $$T=\underset{\varphi(j)=r+1}{\bigstar}^{a_j}\!K_j\simeq\underset{\varphi(j)=r+1}{\bigstar}K_j\;;$$
  \item le foncteur $\varepsilon(gr',*)\int\Pi^{op}\int\eta(A)\to\cc(A)\int\kappa^*\mathfrak{D}_A$ associe à un objet\linebreak[4] $(H_1,\dots,H_n)$ (où les $H_i$ sont des groupes libres de rangs finis non nuls) l'objet $\underline{G}=(G,A\to A*G\to A)$ (avec les morphismes canoniques) de $\cc(A)$, où $G$ est le produit libre des $H_i$, muni de la famille de sous-groupes $(H_1,\dots,H_n)$ (qui appartient à $\mathfrak{D}_A(\kappa(\underline{G})))\simeq\mathfrak{D}_A(\mathcal{L}_A(G))$) ;
  \item à un morphisme $(\varphi;a_1,\dots,a_m;\psi_1,\dots,\psi_n) : (H_1,\dots,H_n)\to (K_1,\dots,K_m)$ de $\varepsilon(gr',*)\int\Pi^{op}\int\eta(A)$, où $\varphi : \mathbf{m}\to\mathbf{n}$ est un morphisme de $\Pi$, les $a_j$ sont des éléments de $A$ et les $\psi_i$ des isomorphismes de groupes $H_i\simeq\underset{\varphi(j)=i}{\bigstar}K_j$, on associe le morphisme
  $$(\underset{i\in\mathbf{n}}{\bigstar}H_i,A\to A*\underset{i\in\mathbf{n}}{\bigstar}H_i\to A,(H_i))\to (\underset{j\in\mathbf{m}}{\bigstar}K_j,A\to A*\underset{j\in\mathbf{m}}{\bigstar}K_j\to A, (K_j))$$
  de $\cc(A)\int\kappa^*\mathfrak{D}_A$ donné par le monomorphisme de groupes
  $$A*\underset{i\in\mathbf{n}}{\bigstar}H_i\simeq A*\underset{i\in\mathbf{n}}{\bigstar}\,\underset{\varphi(j)=i}{\bigstar}K_j\simeq A*\underset{j\in {\rm Def}(\varphi)}{\bigstar}K_j\simeq A*\underset{j\in {\rm Def}(\varphi)}{\bigstar}\,^{a_j}\!K_j\hookrightarrow A*\underset{j\in\mathbf{m}}{\bigstar}K_j$$
  (le premier isomorphisme est induit par les $\psi_i$, le second par la structure monoïdale symétrique du produit libre, le troisième par la conjugaison partielle sur les $K_j$ par les $a_j$), le sous-groupe $\underset{j\in\mathbf{m}\setminus {\rm Def}(\varphi)}{\bigstar}\,^{a_j}\!K_j$ de $A*\underset{j\in\mathbf{m}}{\bigstar}K_j$, et le morphisme de $\mathfrak{D}_A(\kappa(\underset{j\in\mathbf{m}}{\bigstar}K_j,A\to A*\underset{j\in\mathbf{m}}{\bigstar}K_j\to A))\simeq\mathfrak{D}_A(\mathcal{L}_A(\underset{j\in\mathbf{m}}{\bigstar}K_j))$ dont le morphisme de $\Omega$ sous-jacent est $\varphi$ si cette fonction est partout définie, et sinon la fonction surjective partout définie $f : \mathbf{m}\to\mathbf{n+1}$ associée (égale à $\varphi$ sur ${\rm Def}(\varphi)$ et à $n+1$ ailleurs), et donnée par les éléments $a_j$ de $A$. (C'est licite parce que $H_i\subset A*\underset{t\in\mathbf{n}}{\bigstar}H_t$ est envoyé par notre morphisme de groupes sur le sous-groupe $\underset{f(j)=i}{\bigstar}\,^{a_j}\!K_j$ de $A*\underset{j\in\mathbf{m}}{\bigstar}K_j$ et que $\underset{j\in\mathbf{m}\setminus {\rm Def}(\varphi)}{\bigstar}\,^{a_j}\!K_j$ (lorsqu'il est non trivial) est égal à $\underset{f(j)=n+1}{\bigstar}\,^{a_j}\!K_j$.)
 \end{itemize}
\end{pr}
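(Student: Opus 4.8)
The plan is to verify directly that the two displayed assignments are well-defined functors and mutually quasi-inverse; as the lead-in sentence indicates, everything reduces to a bookkeeping exercise relating internal and external free products, so I would organize the argument to isolate the one genuinely structural identity and let formal nonsense dispatch the rest.

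First I would unwind both Grothendieck constructions into concrete descriptions. On the left, an object of $\cc(A)\int\kappa^*\mathfrak{D}_A$ is a pair $(\underline G,(H_1,\dots,H_r))$ with $\underline G=(G,A\xrightarrow{i}G\xrightarrow{p}A)$ and $(H_1,\dots,H_r)$ an $A$-base decomposition of $\kappa(\underline G)={\rm Ker}\,p$; a morphism unwinds to a morphism $(u,T)$ of $\cc(A)$ together with a morphism of $\mathfrak{D}_A(\kappa(\underline G'))$ of the prescribed shape, the subgroup $T$ occupying the extra index $r+1$. On the right, an object is merely a finite family $(H_1,\dots,H_n)$ of nontrivial finite-rank free groups, and a morphism is a triple $(\varphi;a_1,\dots,a_m;\psi_1,\dots,\psi_n)$ with $\varphi$ in $\Pi$, the $a_j$ in $A$, and the $\psi_i$ the internal-to-external identifications. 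Laying these side by side makes the claimed dictionary explicit: the partially defined surjection $\varphi$ records how the $K_j$ reassemble each $H_i$, the complement $\mathbf m\setminus{\rm Def}(\varphi)$ corresponds to the factor $T$, the $a_j$ are the conjugating elements, and each $\psi_i$ is the abstract isomorphism between the internal free product $\underset{\varphi(j)=i}{\bigstar}{}^{a_j}K_j$ and the external $\underset{\varphi(j)=i}{\bigstar}K_j$.

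The one structural fact I would isolate is the canonical isomorphism
$$A*S\;\simeq\;\mathcal{L}_A(S)\rtimes A$$
(internal free product on the left, semidirect product on the right, with ${}^a s=asa^{-1}$), natural in the group $S$ and compatible with the augmentation to $A$. This is exactly what identifies $\underline G$ with the canonical object $(A*\underset{i}{\bigstar}H_i,\,A\to A*\underset{i}{\bigstar}H_i\to A)$ produced by the right-hand functor applied to $(H_1,\dots,H_r)$: since $(H_i)$ is an $A$-base of $\kappa(\underline G)$, the internal free product $S:=\underset{i}{\bigstar}H_i$ is a subgroup of $\kappa(\underline G)$ with $\mathcal{L}_A(S)=\kappa(\underline G)$, whence $i(A)*S=G$ and $\underline G\simeq\underline G'$. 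Granting this identity, well-definedness of both functors is routine: because $\mathfrak{D}_A$ takes values in preordered sets every coherence constraint is automatic, and composition on each side is computed by composing the underlying $\Pi$- (resp. $\Omega$-) maps and multiplying the $A$-labels, which is visibly preserved under the dictionary above. One only has to check that the ``$+1$'' index manipulations ($\varphi_+$, the passage $\mathbf n\to\mathbf{n+1}$, the split between $T=\{1\}$ and $T\neq\{1\}$) correspond to the distinction between defined and undefined points of a $\Pi$-morphism, which they do by construction.

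Finally I would produce the unit and counit isomorphisms. The composite from $\varepsilon(gr',*)\int\Pi^{op}\int\eta(A)$ to itself is the identity: the right-hand functor builds $\underline G'$ on the external product and records the $H_i$, which the left-hand functor then reads straight off. The composite on $\cc(A)\int\kappa^*\mathfrak{D}_A$ sends $(\underline G,(H_i))$ to $(A*\underset{i}{\bigstar}H_i,(H_i))$, and the family of canonical isomorphisms $\underline G\simeq A*\underset{i}{\bigstar}H_i$ assembles into a natural isomorphism to the identity functor, its naturality squares reducing morphism by morphism to the naturality of $A*S\simeq\mathcal{L}_A(S)\rtimes A$. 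I expect the only place requiring care---hence the main obstacle---to be the morphism-level bookkeeping of the extra factor $T$: keeping track, through composition, of how $T$ is created, transported, and possibly killed corresponds precisely to how points enter and leave ${\rm Def}(\varphi)$ in $\Pi$, and it is there that an index slip would hide. Everything else is the promised formal play between internal and external free products.
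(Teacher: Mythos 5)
Your proposal is correct and takes essentially the same route as the paper: the paper offers no argument beyond declaring the verification "un jeu formel d'écriture entre produits libres internes et externes", and your write-up is exactly that verification carried out, organized around the canonical identification $A*S\simeq\mathcal{L}_A(S)\rtimes A$ (which identifies $G$ with $A*\underset{i}{\bigstar}H_i$ when $(H_i)$ is an object of $\mathfrak{D}_A(\kappa(\underline{G}))$) together with the bookkeeping matching the extra factor $T$ with the undefined points of a morphism of $\Pi$. Nothing further is needed.
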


\subsection{L'espace de lacets infinis $\N(\cc(A))$}\label{seli}
Pour démontrer le théorème~\ref{th-hs}, on utilise la construction classique de Segal \cite{Seg}, en suivant également la présentation de Bousfield-Friedlander \cite{BF}. Rappelons ces constructions (attention, nos notations ne concordent pas exactement avec celles des références précédentes). Un {\em $\Gamma$-espace} est un foncteur $F$ de $\Gamma$ vers la catégorie $\fct(\ds^{op},\mathbf{Ens}_\bullet)$ des ensembles simpliciaux pointés qui préserve les objets nuls ; il est dit {\em spécial} si l'application canonique $F(X\sqcup Y)\to F(X)\times F(Y)$ est une équivalence faible pour tous ensembles finis $X$ et $Y$. Un $\Gamma$-espace $F$ détermine un spectre connectif $Sp(F)$ (en fait, la catégorie des $\Gamma$-espaces peut être prise comme modèle des spectres connectifs) de façon fonctorielle.

Un $\Gamma$-espace $F$ se prolonge en un foncteur $\bar{F} : \mathbf{Ens}_\bullet\to\fct(\ds^{op},\mathbf{Ens}_\bullet)$ par extension de Kan à gauche le long du foncteur $\Gamma\to\mathbf{Ens}_\bullet$ composé de l'équivalence de $\Gamma$ avec la catégorie $\mathbf{ens}_\bullet$ des ensembles finis pointés (qui sur les objets s'obtient par ajout d'un point de base externe, et sur les morphismes envoie une fonction partiellement définie sur l'application coïncidant avec elle là où elle est définie et envoyant sur le point de base les éléments sur lesquels la fonction de départ n'était pas définie) et de l'inclusion $\mathbf{ens}_\bullet\to\mathbf{Ens}_\bullet$. On peut ensuite prolonger $\bar{F}$ en un endofoncteur de $\fct(\ds^{op},\mathbf{Ens}_\bullet)$ en considérant
$$\fct(\ds^{op},\mathbf{Ens}_\bullet)\xrightarrow{\fct(\ds^{op},\bar{F})}\fct(\ds^{op},\fct(\ds^{op},\mathbf{Ens}_\bullet))\simeq\fct(\ds^{op}\times\ds^{op},\mathbf{Ens}_\bullet)$$
puis en précomposant par la diagonale $\ds^{op}\to\ds^{op}\times\ds^{op}$. Ce prolongement sera encore noté $\bar{F}$ ; il est muni d'applications structurales canoniques $X\wedge\bar{F}(Y)\to\bar{F}(X\wedge Y)$. Avec le modèle classique des spectres \cite{BF}, le spectre $Sp(F)$ peut se voir comme la suite $(\bar{F}(\mathbb{S}^n))_n$ (où l'on se fixe un modèle simplicial pointé $\mathbb{S}^1$ du cercle et $\mathbb{S}^n:=(\mathbb{S}^1)^{\wedge n}$) munie des morphismes déduits des applications structurales précédentes. Pour tout ensemble simplicial pointé $X$, on dispose d'un morphisme canonique de spectres $Sp(F)\wedge X\to\Sigma^\infty(\bar{F}(X))$ qui est une équivalence faible \cite[{\em Lemma}~4.1]{BF}.

Soit $(\C,+,0)$ une catégorie monoïdale symétrique ; comme nous n'aurons besoin que des isomorphismes dans cette catégorie, nous supposerons d'emblée que c'est un groupoïde. Nous supposerons également que $\C$ est {\em régulière} au sens où $a+b$ ne peut être isomorphe à $0$ que si c'est le cas de $a$ et $b$ et où $0$ n'a pas d'endomorphisme non trivial. Nous noterons $\C'$ la sous-catégorie pleine des objets non isomorphes à l'unité $0$ (qui monoïdale sans unité par régularité ; cette notation concorde avec celles employées précédemment, $gr$ et $gr'$, pour les groupes libres).

Soit $X$ un ensemble pointé : on a défini au §\,\ref{sus-cg} un foncteur $\eta(X) : \Pi^{op}\to\mathbf{Cat}$ (dont les valeurs $\eta(X)(E)=X^E$ sont discrètes) ; on remarque que $X\mapsto\eta(X)$ définit même un foncteur $\mathbf{Ens}_\bullet\to\fct(\Pi^{op},\mathbf{Cat})$, avec un effet évident sur les morphismes.

On définit un foncteur $\mathfrak{C}(\C,-) : \mathbf{Ens}_\bullet\to\mathbf{Cat}$ par
\[\mathfrak{C}(\C,X):=\varepsilon(\C',+)\int\Pi^{op}\int\eta(X)\]
sur les objets, l'effet sur les morphismes provenant de la fonctorialité de $\eta$.

Soit $X$ un ensemble pointé fini, notons $x_0$ son point de base et posons $X_-:=X\setminus\{x_0\}$. On dispose d'un foncteur $F_X : \C^{X_-}\to\mathfrak{C}(\C,X)$ associant à une famille $(c_x)_{x\in X_-}$ d'objets de $\C$ la famille finie $(c_x,x)_{x\in E}$, où $E$ est l'ensemble des $x$ tels que $c_x$ ne soit pas isomorphe à $0$ (avec l'effet évident sur les morphismes --- on rappelle que $\C$ ne contient que des isomorphismes). On dispose également d'un foncteur $G_X : \mathfrak{C}(\C,X)\to\C^{X_-}$ associant à une famille finie $(c_t,a_t)_{t\in E}$ de ${\rm Ob}\,\C'\times X$ l'objet $\Big(\underset{a_t=x}{\sum}c_t\Big)_{x\in X_-}$ de $\C^{X_-}$ (avec encore un effet clair sur les morphismes). 

\begin{lm}\label{lmac} Le foncteur $G_X$ est adjoint à droite à $F_X$.
\end{lm}

\begin{proof} Soient $\mathrm{d}:=(d_x)_{x\in X_-}$ un objet de $\C^{X_-}$ et $\mathrm{c}:=(c_t,a_t)_{t\in E}$ (où $E$ est un ensemble fini, les $a_t$ sont des éléments de $X$ et les $c_t$ des objets de $\C'$) un objet de $\mathfrak{C}(\C,X)$. On définit une fonction
\[\C^{X_-}(\mathrm{d},G_X(\mathrm{c}))\to\mathfrak{C}(\C,X)(F_X(\mathrm{d}),\mathrm{c})\]
comme suit. Un élément $f$ de $\C^{X_-}(\mathrm{d},G_X(\mathrm{c}))$ est une collection de morphismes $\alpha_x : d_x\to\underset{a_t=x}{\sum}c_t$ de $\C$ pour $x\in X_-$ ; on lui associe une fonction $E\to X$ qui envoie $t$ sur $a_t$. Celle-ci se restreint en une fonction {\em partiellement définie} $\varphi$ de $E$ {\em sur} l'ensemble $Y$ des $x$ de $X_-$ tels que $d_x$ n'est pas isomorphe à $0$. La surjectivité provient de ce que $d_x\simeq\underset{a_t=x}{\sum}c_t$, puisque toutes les flèches de $\C$ sont des isomorphismes.

Le morphisme $\varphi\in\Pi(E,Y)$ et le morphisme
\[(d_x)_{x\in Y}\to\varepsilon(\varphi)\big((c_t)_{t\in E}\big)=\Big(\sum_{\varphi(t)=x} c_t\Big)_{x\in Y}=\Big(\sum_{a_t=x}c_t\Big)_{x\in Y}\]
 de $\varepsilon(Y)=\C'^Y$ dont les composantes sont les $\alpha_x$ forment un morphisme $F_X(\mathrm{d})\to\mathrm{c}$ dans $\mathfrak{C}(\C,X)$. En effet, la condition de compatibilité à $\eta(X)$ s'écrit $a_t=\varphi(t)$ si $\varphi(t)$ est défini et $a_t=x_0$ sinon ; elle résulte de la définition de $\varphi$ et de ce que $\varphi(t)$ est toujours défini sauf si $a_t=x_0$ ou si $d_{a_t}$ est isomorphe à $0$, mais cette dernière condition est impossible puisque $\C$ est supposé régulier et que $d_{a_t}\simeq\underset{a_u=a_t}{\sum}c_u$ est une somme non vide d'éléments de $\C$ non isomorphes à $0$.
 
 Le fait que la fonction $\C^{X_-}(\mathrm{d},G_X(\mathrm{c}))\to\mathfrak{C}(\C,X)(F_X(\mathrm{d}),\mathrm{c})$ qu'on vient de définir est une bijection résulte de ce que les morphismes $\alpha_x$ sont uniquement déterminés pour $x\notin Y$, puisqu'on a fait l'hypothèse de régularité que $0$ n'a pas d'endomorphisme non trivial. Sa naturalité en $\mathrm{c}$ et en $\mathrm{d}$ est immédiate, d'où le lemme.
\end{proof}

En particulier, $F_X$ et $G_X$ induisent des équivalences d'homotopie mutuellement quasi-inverses entre les ensembles simpliciaux $\N(\C^{X_-})$ et $\N(\mathfrak{C}(\C,X))$. De plus, ces foncteurs respectent les points de base canoniques de ces catégories (la famille constante en l'objet unité $0$ dans $\C^{X_-}$, et la famille vide dans $\mathfrak{C}(\C,X)$), de sorte qu'il s'agit d'équivalences d'homotopie d'ensembles simpliciaux {\em pointés}. En fait, le foncteur $G_X : \mathfrak{C}(\C,X)\to\C^{X_-}$ est {\em monoïdal} (au sens fort), où la catégorie source est munie de la structure monoïdale symétrique donnée par la concaténation des objets et la catégorie but de la structure monoïdale symétrique produit induite par celle de $\C$. De surcroît, si $Y$ est un autre ensemble pointé fini (on notera de même $Y_-$ le complémentaire du point de base de $Y$), le diagramme
$$\xymatrix{\C^{X_-\sqcup Y_-}=\C^{(X\vee Y)_-}\ar[d]_\simeq\ar[r]^-{F_{X\vee Y}} & \mathfrak{C}(\C,X\vee Y)\ar[d] \\
\C^{X_-}\times\C^{Y_-}\ar[r]^-{F_X\times F_Y} & \mathfrak{C}(\C,X)\times\mathfrak{C}(\C,Y)
}$$
(dont les flèches verticales sont les foncteurs canoniques ; $\vee$ désigne la somme catégorie de $\mathbf{Ens}_\bullet$) commute à isomorphisme près. On en déduit que le $\Gamma$-espace obtenu en composant le foncteur canonique $\Gamma\to\mathbf{Ens}_\bullet$, le foncteur $\mathfrak{C}(\C,-) : \mathbf{Ens}_\bullet\to\mathbf{Cat}$ et le foncteur nerf (dont on a vu qu'il se factorisait naturellement par les ensembles simpliciaux {\em pointés}) est spécial et a le même type d'homotopie que le $\Gamma$-espace associé à la petite catégorie monoïdale symétrique $(\C,+,0)$ par la machinerie de Segal \cite[§\,2]{Seg} (voir aussi \cite[§\,4]{T79}), que nous noterons $\sigma(\C,+,0)$, ou simplement $\sigma(\C)$.

\begin{pr}\label{pr-ehpp} Il existe une équivalence d'homotopie d'ensembles simpliciaux pointés
\[\N\left(\varepsilon(\C',+)\int\Pi^{op}\int\eta(\T,t)\right)\simeq\overline{\sigma(\C)}\big(\N(\T)\big)\]
naturelle en la petite catégorie $\T$ munie d'un objet $t$.
\end{pr}

\begin{proof} Lorsque la catégorie $\T$ est {\em discrète} et finie, le résultat découle du lemme~\ref{lmac} et de la discussion précédente. Il s'étend aussitôt au cas où $\T$ est discrète et infinie, en écrivant une telle catégorie comme la colimite filtrante de ses sous-catégories finies.

Le cas général s'en déduit par un argument formel de colimite homotopique : si $X$ est un ensemble simplicial (pointé), on dispose d'une équivalence d'homotopie naturelle
\[\N(X)\simeq\underset{\mathbf{n}\in\ds^{op}}{\mathrm{hocolim}}\,X_n\]
où $X_n$ est vu comme comme ensemble (pointé) discret, et si $F$ est un $\Gamma$-espace, le prolongement $\bar{F}$ de $F$ aux ensembles simpliciaux pointés donne lieu par construction à une équivalence d'homotopie naturelle d'ensembles simpliciaux pointés
\[\bar{F}(X)\simeq\underset{\mathbf{n}\in\ds^{op}}{\mathrm{hocolim}}\,\bar{F}(X_n).\]

Comme le théorème de la colimite homotopique \cite[{\em Theorem}~1.2]{T79} fournit une équivalence d'homotopie naturelle
\[\N\left(\varepsilon(\C',+)\int\Pi^{op}\int\eta(\T,t)\right)\simeq\underset{(c_e)_{e\in E}\in\varepsilon(\C',+)\int\Pi^{op}}{\mathrm{hocolim}}\,\N\big(\eta(\T,t)(E)\big),\]
les observations précédentes permettent d'étendre l'équivalence naturelle obtenue pour une petite catégorie discrète $\T$ à une petite catégorie quelconque $\T$.
\end{proof}

En utilisant \cite[{\em Theorem}~4.4]{BF}, on en déduit :
\begin{cor}\label{cor-ehpp} Si $\T$ est une petite catégorie connexe munie d'un objet $t$, il existe une équivalence d'homotopie naturelle d'ensembles simpliciaux pointés
\[\N\left(\varepsilon(\C',+)\int\Pi^{op}\int\eta(\T,t)\right)\simeq\Omega^\infty\big(Sp(\sigma(\C))\wedge\N(\T)\big).\]
\end{cor}

Le lemme classique suivant se déduit du théorème~A de Quillen \cite{QK} ; c'est également un cas particulier de la forme générale (i.e. pour des foncteurs {\em faibles}) du théorème de la colimite homotopique de Thomason \cite[{\em Theorem}~3.3]{T79}.

\begin{lm}[Quillen, Thomason]\label{lmtq} Soient $\D$ une petite catégorie et $\Phi : \D\to\mathbf{Cat}$ un foncteur faible tel que, pour tout objet $d$ de $\D$, le nerf de la catégorie $\Phi(d)$ est contractile. Alors le foncteur d'oubli $\D\int\Phi\to\D$ induit une équivalence d'homotopie entre les nerfs de ces catégories.
\end{lm}

\begin{pr}\label{pr-thprpr} Soit $A$ un groupe libre de rang fini. Il existe une équivalence d'homotopie d'ensembles simpliciaux pointés
\[\N(\cc(A))\simeq\Omega^\infty\Sigma^\infty\big(\bb(A)\big)\]
qui est équivariante par rapport aux actions tautologiques du groupe ${\rm Aut}(A)$.
\end{pr}

\begin{proof} Le théorème~\ref{th-ccon} et le lemme~\ref{lmtq} montrent que le foncteur canonique $\cc(A)\int\kappa^*\mathfrak{D}_A\to\cc(A)$, qui est ${\rm Aut}(A)$-équivariant, induit une équivalence d'homotopie d'ensembles simpliciaux pointés (toutes les flèches préservent les points de base canoniques)
\[\N(\cc(A))\simeq\N\Big(\cc(A)\int\kappa^*\mathfrak{D}_A\Big).\]

La proposition~\ref{pr-egro} fournit par ailleurs une équivalence d'homotopie
\[\N\Big(\cc(A)\int\kappa^*\mathfrak{D}_A\Big)\simeq\N\Big(\varepsilon(gr',*)\int\Pi^{op}\int\eta(A)\Big)\]
qui est manifestement pointée et ${\rm Aut}(A)$-équivariante.

On dispose enfin d'équivalence d'homotopie pointées
\[\N\Big(\varepsilon(gr',*)\int\Pi^{op}\int\eta(A)\Big)\simeq\Omega^\infty\big(Sp(\sigma(gr))\wedge\bb(A)\big)\simeq\Omega^\infty\Sigma^\infty\big(\bb(A)\big)\]
grâce au corollaire~\ref{cor-ehpp} et au théorème de Galatius \cite{Gal} montrant que le foncteur monoïdal canonique $\Sigma\to gr$ (où $\Sigma$ désigne le groupoïde sous-jacent à la catégorie monoïdale symétrique $\mathbf{ens}$) induit une équivalence $Sp(\sigma(\Sigma))\to Sp(\sigma(gr))$, le spectre source étant, classiquement \cite[proposition~3.5]{Seg}, équivalent au spectre des sphères. La naturalité en $\T$ dans le corollaire~\ref{cor-ehpp} montre l'équivariance requise, d'où la proposition.
\end{proof}

Pour terminer la démonstration du théorème~\ref{th-hs}, il reste à montrer que l'équivalence d'homotopie de la proposition~\ref{pr-thprpr} est fonctorielle en $A$, non seulement par rapport aux isomorphismes (ce qu'indique la propriété d'équivariance), mais aussi relativement à toutes les flèches de $\scg$, ce qui nécessite un petit plus de travail et sera effectué dans la section~\ref{ssmfc}.

\begin{rem}\label{rq-li}
 L'équivalence de la proposition~\ref{pr-thprpr} est une équivalence d'espaces de lacets infinis. En effet, la catégorie $\cc(A)$ est munie d'une structure monoïdale symétrique donnée par la somme (ou produit libre) amalgamée au-dessus de $A$. Cette structure se relève à $\cc(A)\int\kappa^*\mathfrak{D}_A$ (en utilisant la concaténation des suites de groupes libres), ce qui fait du foncteur canonique $\cc(A)\int\kappa^*\mathfrak{D}_A\to\cc(A)$ un foncteur monoïdal (au sens fort), de sorte que l'équivalence d'homotopie qu'il induit entre les nerfs de ces catégories est une équivalence d'espaces de lacets infinis. Il en est de même pour les autres équivalences d'homotopie utilisées dans la démonstration de la proposition~\ref{pr-thprpr}. Cela provient de ce que l'équivalence de catégories de la proposition~\ref{pr-egro} est monoïdale (où $\varepsilon(gr',*)\int\Pi^{op}\int\eta(A)$ est munie de la structure monoïdale donnée par la concaténation des suites de groupes libres) et, pour les dernières utilisées, c'est une conséquence formelle du travail de Segal \cite{Seg}.
 
 En revanche, si $A\to B$ est un morphisme de $\scg$, le foncteur $\cc(B)\to\cc(A)$ qu'il induit {\em n'}est {\em pas} monoïdal, de sorte qu'il n'est pas clair {\em a priori} que l'application $\N(\cc(B))\to\N(\cc(A))$ est un morphisme d'espaces de lacets infinis. C'est en fait le cas, comme nous allons le voir.
 
 Ces structures d'espaces de lacets infinis ne sont pas nécessaires à l'établissement des résultats du présent article, mais elles procèdent des mêmes idées que celles que nous allons maintenant mettre en \oe uvre pour établir la fonctorialité de l'équivalence du théorème~\ref{th-hs}.
\end{rem}

\subsection{Structure monoïdale du foncteur $\cc : \scg^{op}\to\mathbf{Cat}$ et fin de la démonstration du théorème~\ref{th-hs}}\label{ssmfc}

On rappelle que, si $A$ est un groupe libre de rang fini, $\cc(A)$ est la catégorie d'éléments $\G_{\gamma^*\scg(A,-)}$ ; la fonctorialité du foncteur Hom fait de $\cc$ un foncteur $\scg^{op}\to\mathbf{Cat}$. 

Nous établirons la fonctorialité de l'équivalence du théorème~\ref{th-hs} (construite dans la section~\ref{seli}) à l'aide du lemme immédiat qui suit.

\begin{lm}\label{lmfi} La catégorie $\scg$ est engendrée par les automorphismes et les morphismes canoniques $A\to A*B$ déduits de ce que l'unité de la structure monoïdale symétrique $*$ de $\scg$ en est un objet initial.

Par conséquent, si $\D$ est une catégorie quelconque et que $X, Y : \scg\to\D$ sont des foncteurs, toute collection de morphismes $f_A : X(A)\to Y(A)$ de $\D$ équivariants pour l'action de ${\rm Aut}(A)$, pour $A\in {\rm Ob}\,\scg$, définit une transformation naturelle de $X$ vers $Y$ si le diagramme
\[\xymatrix{X(A)\ar[d]_{f_A}\ar[r] & X(A*B)\ar[d]^{f_{A*B}}\\
Y(A)\ar[r] & Y(A*B)
}\]
dont les flèches horizontales sont induites par le morphisme canonique commute pour tous objets $A$ et $B$ de $\scg$.
\end{lm}

On dispose par ailleurs d'une structure monoïdale symétrique sur le foncteur $\cc$ obtenue comme suit.
Soient $A$ et $B$ deux objets de $\scg$. Le produit libre induit un foncteur $\cc(A)\times\cc(B)\to\cc(A*B)$ naturel en $A$ et $B$ et vérifiant les conditions de cohérence monoïdales symétriques usuelles. De plus, la composée de ce foncteur avec le foncteur $\cc(A*B)\to\cc(A)\times\cc(B)$ induit par les morphismes canoniques $A\to A*B$ et $B\to A*B$ de $\scg$ est homotope à l'identité. En effet, il existe une transformation naturelle de l'identité de $\cc(A)\times\cc(B)$ vers cette composée, donnée sur l'objet $(\underline{G},\underline{H})$, où $\underline{G}=(G,A\to G\to A)$ et $\underline{H}=(H,B\to H\to B)$, par les morphismes canoniques $G\to G*H$ et $H\to G*H$ de $\G$.

\begin{pr}\label{mpr} Soient $A$ et $B$ des objets de $\scg$. Le diagramme suivant de la catégorie homotopique des ensembles simpliciaux pointés commute
\begin{equation}\label{dcmon}
\xymatrix{\N\cc(A)\times\N\cc(B)\ar[r]\ar[d]^\simeq & \N\cc(A*B)\ar[r]\ar[d]^\simeq & \N\cc(A)\times\N\cc(B)\ar[d]^\simeq \\
Q\bb(A)\times Q\bb(B)\ar[r]^-\simeq & Q\bb(A*B)\ar[r]^-\simeq & Q\bb(A)\times Q\bb(B)
}
\end{equation}
où l'on note $Q$ le foncteur $\Omega^\infty\Sigma^\infty$, les isomorphismes verticaux sont ceux de la proposition~\ref{pr-thprpr}, les flèches horizontales inférieures sont les isomorphismes canoniques et les flèches horizontales supérieures sont obtenues en appliquant le foncteur $\N$ (qui commute aux produits) aux foncteurs qu'on vient de discuter entre $\cc(A)\times\cc(B)$ et $\cc(A*B)$.
\end{pr}

\begin{proof} Les composées horizontales supérieure et inférieure de ce diagramme sont les identités. Il suffit donc d'établir que le carré de gauche est commutatif. Cela se voit en reprenant la définition de l'équivalence de la proposition~\ref{pr-thprpr} et en notant que chacune des équivalences qui la composent est monoïdale.

En effet, la structure monoïdale $\cc(A)\times\cc(B)\to\cc(A*B)$ sur $\cc$ se relève (en utilisant la concaténation des suites de groupes libres) en une structure monoïdale $\cc(A)\int\kappa^*\mathfrak{D}_A\times\cc(B)\int\kappa^*\mathfrak{D}_B\to\cc(A*B)\int\kappa^*\mathfrak{D}_{A*B}$, de sorte que le diagramme
 \[\xymatrix{\cc(A)\int\kappa^*\mathfrak{D}_A\times\cc(B)\int\kappa^*\mathfrak{D}_B\ar[r]\ar[d] & \cc(A*B)\int\kappa^*\mathfrak{D}_{A*B}\ar[d] \\
 \cc(A)\times\cc(B)\ar[r] & \cc(A*B)
 }\]
dont les flèches verticales sont les foncteurs canoniques commute à isomorphisme canonique près.

De plus, on peut également former un diagramme commutatif (à isomorphisme canonique près)
  \[\xymatrix{\cc(A)\int\kappa^*\mathfrak{D}_A\times\cc(B)\int\kappa^*\mathfrak{D}_B\ar[r]\ar[d]_\simeq & \cc(A*B)\int\kappa^*\mathfrak{D}_{A*B}\ar[d]^\simeq \\
 \Big(\varepsilon(gr',*)\int\Pi^{op}\int\eta(A)\Big)\times\Big(\varepsilon(gr',*)\int\Pi^{op}\int\eta(B)\Big)\ar[r] & \varepsilon(gr',*)\int\Pi^{op}\int\eta(A*B)
 }\]
 où les flèches verticales sont les équivalences de la proposition~\ref{pr-egro} et la structure monoïdale donnée par la flèche horizontale inférieure est toujours donnée par la concaténation des suites de groupes libres.
 
Quant aux équivalences d'homotopie
\[\N\Big(\varepsilon(gr',*)\int\Pi^{op}\int\eta(A)\Big)\simeq\overline{\sigma(gr)}(\eta(A))\simeq Q\bb(A),\]
données par le corollaire~\ref{cor-ehpp} et \cite{Gal}, elles sont monoïdales en $A$, car $\sigma(gr)$ est un $\Gamma$-espace spécial. Cela termine la démonstration.
\end{proof}

\begin{proof}[Fin de la démonstration du théorème~\ref{th-hs}] Le théorème s'obtient en combinant la proposition~\ref{pr-thprpr}, le lemme~\ref{lmfi} et la commutation du carré de droite du diagramme \eqref{dcmon} de la proposition~\ref{mpr}.
\end{proof}

\bibliographystyle{plain}

\bibliography{bibli-hodge.bib}

\begin{thebibliography}{10}

\bibitem{BEc}
M.~G. Barratt and Peter~J. Eccles.
\newblock {$\Gamma ^{+}$}-structures. {III}. {T}he stable structure of {$\Omega
  ^{\infty }\Sigma ^{\infty }A$}.
\newblock {\em Topology}, 13:199--207, 1974.

\bibitem{BF}
A.~K. Bousfield and E.~M. Friedlander.
\newblock Homotopy theory of {$\Gamma $}-spaces, spectra, and bisimplicial
  sets.
\newblock In {\em Geometric applications of homotopy theory ({P}roc. {C}onf.,
  {E}vanston, {I}ll., 1977), {II}}, volume 658 of {\em Lecture Notes in Math.},
  pages 80--130. Springer, Berlin, 1978.

\bibitem{CMT}
F.~R. Cohen, J.~P. May, and L.~R. Taylor.
\newblock Splitting of certain spaces {$CX$}.
\newblock {\em Math. Proc. Cambridge Philos. Soc.}, 84(3):465--496, 1978.

\bibitem{Dja-JKT}
Aur\'elien Djament.
\newblock Sur l'homologie des groupes unitaires à coefficients polynomiaux.
\newblock {\em J. K-Theory}, 10(1):87--139, 2012.

\bibitem{DPV}
Aur\'elien {Djament}, Teimuraz {Pirashvili}, and Christine {Vespa}.
\newblock {Cohomologie des foncteurs polynomiaux sur les groupes libres.}
\newblock {\em {Doc. Math., J. DMV}}, 21:205--222, 2016.

\bibitem{DV}
Aur{\'e}lien Djament and Christine Vespa.
\newblock Sur l'homologie des groupes orthogonaux et symplectiques \`a
  coefficients tordus.
\newblock {\em Ann. Sci. \'Ec. Norm. Sup\'er. (4)}, 43(3):395--459, 2010.

\bibitem{DV15}
Aur{\'e}lien Djament and Christine Vespa.
\newblock Sur l'homologie des groupes d'automorphismes des groupes libres \`a
  coefficients polynomiaux.
\newblock {\em Comment. Math. Helv.}, 90(1):33--58, 2015.

\bibitem{Dold}
Albrecht Dold.
\newblock Zur {H}omotopietheorie der {K}ettenkomplexe.
\newblock {\em Math. Ann.}, 140:278--298, 1960.

\bibitem{EML}
Samuel Eilenberg and Saunders Mac~Lane.
\newblock On the groups {$H(\Pi,n)$}. {II}. {M}ethods of computation.
\newblock {\em Ann. of Math. (2)}, 60:49--139, 1954.

\bibitem{FPs}
Vincent Franjou and Teimuraz Pirashvili.
\newblock Stable {$K$}-theory is bifunctor homology (after {A}. {S}corichenko).
\newblock In {\em Rational representations, the {S}teenrod algebra and functor
  homology}, volume~16 of {\em Panor. Synth\`eses}, pages 107--126. Soc. Math.
  France, Paris, 2003.

\bibitem{Gal}
S{\o}ren Galatius.
\newblock Stable homology of automorphism groups of free groups.
\newblock {\em Ann. of Math. (2)}, 173(2):705--768, 2011.

\bibitem{HPV}
Manfred Hartl, Teimuraz Pirashvili, and Christine Vespa.
\newblock Polynomial functors from algebras over a set-operad and nonlinear
  {M}ackey functors.
\newblock {\em Int. Math. Res. Not. IMRN}, (6):1461--1554, 2015.

\bibitem{HVW}
Allan Hatcher, Karen Vogtmann, and Natalie Wahl.
\newblock Erratum to: ``{H}omology stability for outer automorphism groups of
  free groups [{A}lgebr. {G}eom. {T}opol. {\bf 4} (2004), 1253--1272
  (electronic)] by {H}atcher and {V}ogtmann.
\newblock {\em Algebr. Geom. Topol.}, 6:573--579 (electronic), 2006.

\bibitem{HV04}
Allen Hatcher and Karen Vogtmann.
\newblock Homology stability for outer automorphism groups of free groups.
\newblock {\em Algebr. Geom. Topol.}, 4:1253--1272, 2004.

\bibitem{HW}
Allen Hatcher and Nathalie Wahl.
\newblock Stabilization for the automorphisms of free groups with boundaries.
\newblock {\em Geom. Topol.}, 9:1295--1336 (electronic), 2005.

\bibitem{HW-erra}
Allen Hatcher and Nathalie Wahl.
\newblock Erratum to: ``{S}tabilization for the automorphisms of free groups
  with boundaries'' [{G}eom. {T}opol. {\bf 9} (2005), 1295--1336; 2174267].
\newblock {\em Geom. Topol.}, 12(2):639--641, 2008.

\bibitem{Hoa}
A.~H.~M. Hoare.
\newblock On length functions and {N}ielsen methods in free groups.
\newblock {\em J. London Math. Soc. (2)}, 14(1):188--192, 1976.

\bibitem{PJ}
Mamuka Jibladze and Teimuraz Pirashvili.
\newblock Cohomology of algebraic theories.
\newblock {\em J. Algebra}, 137(2):253--296, 1991.

\bibitem{Kahn}
Daniel~S. Kahn.
\newblock On the stable decomposition of {$\Omega ^{\infty }S^{\infty }A$}.
\newblock In {\em Geometric applications of homotopy theory ({P}roc. {C}onf.,
  {E}vanston, {I}ll., 1977), {II}}, volume 658 of {\em Lecture Notes in Math.},
  pages 206--214. Springer, Berlin, 1978.

\bibitem{K-Magnus}
Nariya Kawazumi.
\newblock Cohomological aspects of {M}agnus expansions.
\newblock arXiv : math.GT/0505497, 2006.

\bibitem{K-mm}
Nariya Kawazumi.
\newblock Twisted {M}orita-{M}umford classes on braid groups.
\newblock In {\em Groups, homotopy and configuration spaces}, volume~13 of {\em
  Geom. Topol. Monogr.}, pages 293--306. Geom. Topol. Publ., Coventry, 2008.

\bibitem{Lli}
Jean-Louis Loday.
\newblock {\em Cyclic homology}, volume 301 of {\em Grundlehren der
  Mathematischen Wissenschaften [Fundamental Principles of Mathematical
  Sciences]}.
\newblock Springer-Verlag, Berlin, second edition, 1998.
\newblock Appendix E by Mar\'{i}a O. Ronco, Chapter 13 by the author in
  collaboration with Teimuraz Pirashvili.

\bibitem{Lyn}
Roger~C. Lyndon.
\newblock Length functions in groups.
\newblock {\em Math. Scand.}, 12:209--234, 1963.

\bibitem{LS}
Roger~C. Lyndon and Paul~E. Schupp.
\newblock {\em Combinatorial group theory}.
\newblock Springer-Verlag, Berlin-New York, 1977.
\newblock Ergebnisse der Mathematik und ihrer Grenzgebiete, Band 89.

\bibitem{Mor}
Shigeyuki Morita.
\newblock Cohomological structure of the mapping class group and beyond.
\newblock In {\em Problems on mapping class groups and related topics},
  volume~74 of {\em Proc. Sympos. Pure Math.}, pages 329--354. Amer. Math.
  Soc., Providence, RI, 2006.

\bibitem{Nak}
Minoru Nakaoka.
\newblock Decomposition theorem for homology groups of symmetric groups.
\newblock {\em Ann. of Math. (2)}, 71:16--42, 1960.

\bibitem{Nak2}
Minoru Nakaoka.
\newblock Homology of the infinite symmetric group.
\newblock {\em Ann. of Math. (2)}, 73:229--257, 1961.

\bibitem{Ni18}
J.~Nielsen.
\newblock \"{U}ber die {I}somorphismen unendlicher {G}ruppen ohne {R}elation.
\newblock {\em Math. Ann.}, 79(3):269--272, 1918.

\bibitem{Petresco}
Julian Petresco.
\newblock Sur les groupes libres.
\newblock {\em Bull. Sci. Math. (2)}, 80:6--32, 1956.

\bibitem{P-hodge}
Teimuraz Pirashvili.
\newblock Hodge decomposition for higher order {H}ochschild homology.
\newblock {\em Ann. Sci. \'Ecole Norm. Sup. (4)}, 33(2):151--179, 2000.

\bibitem{QK}
Daniel Quillen.
\newblock Higher algebraic {$K$}-theory. {I}.
\newblock In {\em Algebraic {$K$}-theory, {I}: {H}igher {$K$}-theories ({P}roc.
  {C}onf., {B}attelle {M}emorial {I}nst., {S}eattle, {W}ash., 1972)}, pages
  85--147. Lecture Notes in Math., Vol. 341. Springer, Berlin, 1973.

\bibitem{RW}
Oscar Randal-Williams.
\newblock The stable cohomology of automorphisms of free groups with
  coefficients in the homology representation.
\newblock arXiv : math.AT/1012.1433, 2010.

\bibitem{RW2}
Oscar Randal-Williams.
\newblock Cohomology of automorphism groups of free groups with twisted
  coefficients.
\newblock {\em Selecta Math. (N.S.)}, 24(2):1453--1478, 2018.

\bibitem{RWW}
Oscar Randal-Williams and Nathalie Wahl.
\newblock Homological stability for automorphism groups.
\newblock {\em Adv. Math.}, 318:534--626, 2017.

\bibitem{Sat1}
Takao Satoh.
\newblock Twisted first homology groups of the automorphism group of a free
  group.
\newblock {\em J. Pure Appl. Algebra}, 204(2):334--348, 2006.

\bibitem{Sat2}
Takao Satoh.
\newblock Twisted second homology groups of the automorphism group of a free
  group.
\newblock {\em J. Pure Appl. Algebra}, 211(2):547--565, 2007.

\bibitem{Sco}
Alexander Scorichenko.
\newblock {\em Stable {K}-theory and functor homology over a ring}.
\newblock PhD thesis, Evanston, 2000.

\bibitem{Seg}
Graeme Segal.
\newblock Categories and cohomology theories.
\newblock {\em Topology}, 13:293--312, 1974.

\bibitem{Snaith}
V.~P. Snaith.
\newblock A stable decomposition of {$\Omega ^{n}S^{n}X$}.
\newblock {\em J. London Math. Soc. (2)}, 7:577--583, 1974.

\bibitem{T79}
R.~W. Thomason.
\newblock Homotopy colimits in the category of small categories.
\newblock {\em Math. Proc. Cambridge Philos. Soc.}, 85(1):91--109, 1979.

\bibitem{ves-cal}
Christine Vespa.
\newblock Extensions between functors from free groups.
\newblock {\em Bull. Lond. Math. Soc.}, 50(3):401--419, 2018.

\bibitem{Weib}
Charles~A. Weibel.
\newblock {\em An introduction to homological algebra}, volume~38 of {\em
  Cambridge Studies in Advanced Mathematics}.
\newblock Cambridge University Press, Cambridge, 1994.

\end{thebibliography}
\end{document}